\pgfplotsset{compat=1.3}
\pgfplotsset{every tick label/.append style={font=\tiny}}
\def\half{{\textstyle \frac{1}{2}}}
\def\rh{\mathrm{h}}
\DeclareMathOperator{\var}{var}
\numberwithin{equation}{section}
\newcommand{\M}{\mathcal M_T}
\newcommand{\C}{\mathcal C_T}
\newcommand{\ML}{\mathcal M(\Omega,L^2(I))}
\newcommand{\CL}{\mathcal C_0(\Omega,L^2(I))}
\newcommand{\LwM}{L_{w^*}^2(I,\mathcal M(\Omega))}
\renewcommand{\c}{\mathcal C_0(\Omega)}
\newcommand{\m}{\mathcal M(\Omega)}
\newcommand{\R}{\mathbb R}
\renewcommand{\L}{L^2(I\times\Omega)}
\renewcommand{\l}{L^2(\Omega)}
\newcommand{\lt}{L^2(I)}
\newcommand{\diag}{\operatorname{diag}}
\renewcommand{\rh}{\vartheta}
\newcommand{\spa}{\operatorname{span}}
\def\beal{\begin{equation}\begin{aligned}}
\def\eeal{\end{aligned}\end{equation}}
\def\bealn{\begin{equation}\left\{\begin{aligned}}
\def\eealn{\end{aligned}\right.\end{equation}}
\newtheorem{theorem}{Theorem}[section]
\newtheorem{lemma}[theorem]{Lemma}
\newtheorem{proposition}{Proposition}
\theoremstyle{definition}
\newtheorem{definition}[theorem]{Definition}
\newtheorem{remark}{Remark}
\newtheorem{assumption}{Assumption}
\definecolor{green}{rgb}{0,0.7,0}
\newcommand\Blue[1]{\textcolor{black}{#1}}
\title[FEM for hyperbolic control problems]{Finite element error analysis for measure-valued optimal control problems governed by a 1D wave equation with variable coefficients}
\author[Philip Trautmann and Boris Vexler and Alexander Zlotnik]{}
\subjclass{Primary: 65M60, 49K20, 49M05, 49M25, 49M29; Secondary: 35L05.}
\keywords{Wave equation, optimal control, measure-valued control, vector measure control, finite element method, stability, error estimates.}
\email{philip.trautmann@uni-graz.at}
\email{vexler@ma.tum.de}
\email{azlotnik@hse.ru}
\thanks{The first and the second author were supported by FWF and DFG through the International Research Training Group IGDK 1754 `Optimization and Numerical Analysis for Partial Differential Equations with Nonsmooth Structures'. The third has been funded within the framework of the Academic Fund Program at the National Research University Higher School of Economics in 2016-2017 (grant no. 16-01-0054) and by the Russian Academic Excellence Project `5-100'. He also thanks the Technical University of Munich for its hospitality in 2014-2015 years.}
\begin{document}
\maketitle

\centerline{\scshape Philip Trautmann}
\medskip
{\footnotesize
 \centerline{Department of Mathematics and Scientific Computing}
 \centerline{University of Graz}
   \centerline{Heinrichstra{\ss}e 36}
   \centerline{8010 Graz, Austria}
}
\medskip
\centerline{\scshape Boris Vexler}
\medskip
{\footnotesize
 \centerline{Zentrum Mathematik}
 \centerline{Technische Universit\"at M\"unchen}
   \centerline{Boltzmannstra{\ss}e 3}
   \centerline{85748 Garching bei M\"unchen, Germany}
}
\medskip
\centerline{\scshape Alexander Zlotnik}
\medskip
{\footnotesize
 \centerline{Department of Mathematics at Faculty of Economic Sciences}
 \centerline{National Research University Higher School of Economics}
   \centerline{Myasnitskaya 20}
   \centerline{101000 Moscow, Russia}
}
\bigskip
\centerline{(Communicated by \Blue{the associate editor name})}
\begin{abstract}
This work is concerned with the optimal control problems governed by a 1D wave equation with variable coefficients and the control spaces $\mathcal M_T$ of either measure-valued functions
{$\LwM$}
or vector measures $\mathcal M(\Omega,L^2(I))$.
The cost functional involves the standard quadratic {tracking} terms and the regularization term $\alpha\|u\|_{\mathcal M_T}$ {with} $\alpha>0$.
We construct and study three-level in time bilinear finite element discretizations for {this class of} problems.
The main focus lies on the derivation of error estimates for the optimal state variable and the error measured in the cost functional. The analysis is mainly based on some previous results of the authors.
The numerical results are included.
\end{abstract}
\section{Introduction}
This work is concerned with the discretization and numerical analysis of optimal control problems involving
{a 1D linear wave equation with variable coefficients} and controls taking values in certain measure spaces.
{The combination of variable coefficients and irregular data leads to significant technical problems.}

\par Motivated by industrial applications as well as applications in the natural sciences, in which one is interested to place actuators in form of point sources in an optimal way, see, e.g., \cite{BermudezGamalloRoriguez:2004,Brunner:12} or in the reconstruction of point sources from given measurements, see, e.g., \cite{KunischTrautmannVexler14,PieperHelm2016}, measure valued optimal control problems involving PDEs gained attention in the last years. These problems can be translated into optimization problems in terms of the coordinates and coefficients of the point sources. However, these optimization problem are non-convex since the solution of the state equation (PDE) depends in a non-linear way on the coordinates of the point sources. Thus one has to deal with multiple local minima. Several authors suggested to cast the control problem resp. inverse problem in form of an optimization problem over a suitable measure space $\M$ involving a convex regularization functional $R$ which favors point sources as solutions. In our case we introduce the following problem formulation involving the 1D wave equation
\begin{equation}\label{proto_prob}
\begin{aligned}
J(u)&=F(y)+R(u) \to \min_{u\in \M}\\
\text{subject to}
\quad &\rho\partial_{tt}y-\partial_x(\kappa\partial_xy)=u\quad \text{for}~(t,x)\in I\times\Omega=(0,T)\times (0,L)
\end{aligned}
\end{equation}
with additional initial and boundary conditions. The functional $F$
is given by a quadratic tracking functional involving $y|_{I\times\Omega}$, $y(T,\cdot)|_\Omega$ and $\partial_ty(T,\cdot)|_\Omega$. The regularization functional $R$ and the control space $\M$ are chosen in a way such that $\M$ contains point sources of the desired form and $R$ promotes controls of such a form, i.e. linear combinations of point sources with time-dependent intensities or more general controls with a small spatial support. Since problem \eqref{proto_prob} is convex, one {does not} need to deal with several local minima. However, it is not longer guaranteed that the solution consists of a sum of point sources.
We enforce such controls via the regularization functional $R$. Problems of the form \eqref{proto_prob} (also involving other PDEs) have been analysed from theoretical, numerical and algorithmic points of view, see
\cite{CasasClasonKunisch2013,CasasClasonKunisch:2012,ClasonKunisch:2011a,
ClasonKunisch:2011b,PieperVexler:2013,KunischPieperVexler2014,KunischTrautmannVexler14,CasasKunisch14,CasasKunisch15,
BrediesPikkarainen:2013,
PieperHelm2016,CasasZuazua13,CasasVexlerZuazua13}.
Optimal control problems governed by the linear wave equation were discussed in several different aspects, see \cite{KunischWachsmuth13,KroenerKunisch14,KroenerKunischVexler11,LasieckaSokolowski91,Zuazua04,GugatTrelatZuazua16,GugatKeimerLeugering09,
MordukhovichRaymond04,MordukhovichRaymond05,Kroener11,Kroener13}.
In our particular case we consider the control spaces  {$\M$ of measure-valued functions ${\LwM}$
and vector measures $\ML$ with $R(u)=\alpha\|u\|_{\M}$.}
These two different choices imply different structural properties of the optimal controls.
{A typical {non-regular} element from the space $\ML$
is given by
\begin{equation}
 u=\sum_{i=1}^nu_i(t)\delta_{x_i},\quad u_i\in L^2(I),~x_i\in \Omega,
\label{control}
\end{equation}
where $\delta_{x_i}$ are the Dirac delta functions.}
Point sources {of such type}  with fixed positions and time-dependent intensities are of interest in acoustics or geology, see \cite{KunischTrautmannVexler14,PieperHelm2016}.
If one is interested in controls involving moving point sources of the form
\begin{equation}
u=\sum_{i=1}^nu_i(t)\delta_{x_i(t)},\ \ u_i\in L^2(I),\ \ x_i: I\to\Omega\ \ \text{is measurable},
\label{control 2}
\end{equation}
{then the control space ${\LwM}$ rather than $\ML$ is more appropriate.}
{The space $\ML$ and the functional $\|\cdot\|_{\ML}$ are also related to the term directional sparsity resp. joint sparsity, see \cite{HerzogStadlerWachsmuth2012,FornasierRauhut08}.}

{For the discretization of optimal control problem~\eqref{proto_prob}, we discretize the state equation by space-time finite element method as introduced in \cite{Zlotnik94}.
Related methods are also discussed and analyzed in \cite{Bales94,FrenchPeterson96}, see also \cite{BangerthGeigerRannacher10}.
The measure-valued control is not directly discretized, cf.
the variational control discretization from \cite{Hinze:2005}. However, there exists optimal controls consisting of Dirac measures in the spatial grid points which can be computed, see also \cite{CasasClasonKunisch2013,KunischPieperVexler2014}. The numerical analysis of the control problem is based on {FEM} error estimates for the {second order hyperbolic equations}
from \cite{Zlotnik94} and techniques developed in \cite{CasasClasonKunisch2013,KunischPieperVexler2014}.
It requires to overcome significant technical difficulties caused by non-smoothness of controls and states.
To the best of our knowledge, this is the first paper providing such numerical analysis for the studied control problems.
}
\par The problem like \eqref{proto_prob} for {a parabolic/heat state equation is analyzed {for the case} $\M=\ML$ in \cite{KunischPieperVexler2014} and
{for the case} $\M={\LwM}$ in \cite{CasasClasonKunisch2013}.
In particular, {in both papers} the authors prove existence of optimal controls and derive optimality conditions and {FEM} error estimates.} Our analysis is partly based on these results of \cite{KunischPieperVexler2014}.
In \cite{KunischTrautmannVexler14} a problem {similar to~\eqref{proto_prob}} involving the linear wave equation with constant coefficients as state equation is analyzed. In particular, existing regularity results for a Dirac right{-}hand side are extended to sources from $\ML$. Based on these regularity results existence of optimal controls is proved as well as optimal conditions are derived in the 3D case.
\par {Now we briefly}
sum up the contents of this work. First of all we collect and {partially prove required}
existence and regularity results for the linear wave equation in the 1D setting. In particular, we check that the notions of a weaker solution defined in \cite{Zlotnik94} and more commonly used very weak solution, e.g. \cite{lions1972non}, are equivalent. Most importantly we prove that the solution of
the linear wave equation with variable coefficients from $H^1(\Omega)$ for any source term $u\in \ML$ is an element of {$\mathcal C(\bar I,H^1_0(\Omega))\cap \mathcal C^1(\bar I,L^2(\Omega))$ provided that the initial data have relevant regularity.}
The proof is based on a non-standard energy type bound in space, not only in time, cf. \cite{Lions1987,Fabre1994}.
In \cite{KunischTrautmannVexler14} the same result is proved {for the wave equation with constant coefficients} using duality techniques. {This proof in~\cite{KunischTrautmannVexler14} provides also corresponding results for multidimensional case but can not be directly extended for treating variable coefficients.
{This is due to the fact that it uses estimates of the solution of the wave equation in the whole space with a Dirac measure on the right hand-side which are proven using the Fourier-and Laplace-transformation or explicit solution formulas.}}

The existence of optimal controls and the derivation of optimality conditions are discussed on the basis of results from \cite{KunischTrautmannVexler14,KunischPieperVexler2014}. In the case $\M=\ML$ we prove that
the optimal control $\bar u$ belongs to {$\mathcal C^1(\bar I,\m)$.}
\par {Further, the FEM} discretization of the state equation is introduced. The state variable
$y_{h,\tau}$ belongs to the space of bilinear finite elements and is defined by the regularized Galerkin method.
The resulting numerical scheme is a three-level method in time {(i.e., its main equation relates the approximate solution values at three consecutive grid time levels)}.
Moreover, we {pose and prove the FEM error estimates in $\mathcal C(\bar I,L^2(\Omega))$ for} the discrete state equation which we need for the numerical analysis of the control problem.
We base this study mainly on the results from \cite{Zlotnik94} concerning error analysis of {FEMs} for the second order hyperbolic equations in the classes of the data having integer Sobolev or fractional Nikolskii order of smoothness.
Note that their sharpness in a strong sense was stated in \cite{Zlotnik92}.

Then we consider a semi-discrete optimal control problem in which the continuous state equation is replaced by its discretized version whereas the controls are not discretized. We
{prove convergence of the discrete optimal controls to the continuous one and}
derive optimality conditions based on the Lagrange techniques. Most importantly we derive the discrete adjoint state equation.
We can conclude that the first-discretize-then-optimize and first-optimize-then-discretize approaches commute.
Therefore {an analysis of the discrete adjoint state equation including the error estimates in
$\mathcal C(\bar{I}\times\bar{\Omega})$ and $L^2(I,\mathcal{C}_0(\Omega))$} can also be based on techniques
from \cite{Zlotnik94}. Then we use results from \cite{KunischPieperVexler2014} to represent the numerical error of state variable and of the cost functional in terms of {FEM} errors of the state equation and the adjoint state equation.
Let $\bar u$ and $\bar y$ be the optimal control and the corresponding optimal state, and the variables
$\bar u_{\tau,h}$ and $\bar y_{\tau,h}$
be their discrete counterparts. As the main result of this paper
we prove the error estimates
\[
\|\bar y-\bar y_{\tau,h}\|_{L^2(I\times \Omega)}=\mathcal O\big(({\tau+h})^\alpha\big),\quad
|J(\bar u)-J(\bar u_{\tau,h})|=\mathcal O\big({(\tau+h)^{2/3}}
\big)
\]
{where $\tau$ is the step in time, $h$ is the maximal step in space and}
$\alpha=1/3$ for $\M={\LwM}$ or $\alpha=2/3$ for $\M=\ML$.
The latter higher order is due to the above mentioned improved regularity results for the state and optimal control.
{ Such estimates are proved for the measure-valued controls in the hyperbolic case for the first time.
Similar estimates are impossible in multidimensional settings due to much less fractional Sobolev regularity of optimal states and controls.}
\par Finally we discuss the numerical computation of the discrete control $\bar u_{h,\tau}$. Based on a control discretization $u_{h,\tau}$ that given by the sum {like \eqref{control} with $x_i$ at the spatial grid points and $u_i$ in the space of linear finite elements,}
a solution of the semi-discrete control problem can be calculated similarly to \cite{KunischPieperVexler2014}. For the actual numerical computation of the optimal control we add the term $(\gamma/2)\|u\|_{L^2(I\times\Omega)}^2$, $\gamma>0$, to \eqref{proto_prob}. This regularized problem is solved by a semi-smooth Newton method, see \cite{Pieper:2015}. In a continuation strategy the regularization parameter $\gamma$ is made sufficiently small. We {complete
this work with a numerical example for $\M=\ML$.
}
\par The paper is organized in the following way. In Section \ref{Problem setting} we introduce the problem setting and the control spaces resp. the regularization functionals. Section \ref{sec:regularity} is concerned with regularity properties of the linear wave equation with variable coefficients in the 1D setting. In Section \ref{sec:existence_optimality} the control problem is analyzed from a theoretical point of view.
Section \ref{sec:disc state} deals with discretization of the state equation.
Then we obtain stability bounds and error estimates
for the discrete state equation in Section \ref{sec:error_state}.
Section \ref{sec:disc prob} is concerned with the analysis of the semi-discrete optimal control problem. The next section discusses stability bounds and error estimates for the discrete adjoint sate equation.
In Sections \ref{sec:error_opt_state} resp. \ref{sec:error_cost_func} error estimates for the optimal state and cost functional are derived being the main theoretical results of the study.
Section \ref{tistfo} deals with the time stepping formulation of the discrete state equation.
{In Section \ref{sec:control disc}}
we discuss the control discretization with Dirac measures {at} the grid points.
Then we introduce the $L^2(I\times\Omega)$ regularized problem and describe its solutions by a semi-smooth Newton method.
Finally Section \ref{sec:numerics} provides a numerical example.
\section{Problem setting}
\label{Problem setting}
We consider optimal control problems of the following form
\begin{equation}\label{measure_control_problem}\tag{$\mathcal P$}
J(y,u)=F(y)+\alpha\|u\|_{\M}\to\min_{u,y}
\end{equation}
with the parameter $\alpha>0$ and the tracking functional
\[
 F(y):=\half\big(\left\|y-z_1\right\|_{L^2(I,{H_\rho})}^{2}
+\left\|y(T)-z_2\right\|_{{H_\rho}}^{2}+\left\|\rho\partial_ty(T)-z_3\right\|_{{\mathcal{V}_\kappa^*}}^{2}\big)
\]
using $\mathbf{z}:=(z_1,z_2,z_3)\in\mathcal{Y}:=\L\times \l\times H^{-1}(\Omega)$,
subject to \textit{the state equation} which is an initial-boundary value problem for {a 1D linear wave equation with variable coefficients}
\begin{equation}\label{state_equation}
\left\{\begin{aligned}
\rho\partial_{tt}y-\partial_x(\kappa\partial_{x}y)&=u&&\text{in}~I\times \Omega:=(0,T)\times(0,L)
\\
y&=0&&\text{on}~I\times\partial\Omega\\
y=y^0,~\partial_t y&=y^1&&\text{in}~\{0\}\times\Omega.\\
\end{aligned}\right.
\end{equation}
Here, in particular, the initial data
$\mathbf{y}:=(y^0,y^1)\in H^1_0(\Omega)\times L^2(\Omega)$, and $L>0$ and $T>0$.
The
coefficients $\rho,\kappa\in {H^1(\Omega)}$
satisfy $\rho(x)\geq\nu>0$ and $\kappa(x)\geq\nu$ on $\Omega$.
\par For brevity we denote $H=\l$, $V=H^1_0(\Omega)$, $V^2=H^2(\Omega)\cap V$ and $V^3=\{v\in V|\partial_{x}(\kappa \partial_x v)\in V\}$ equipped with the norms
\[
 \|\cdot\|_{V}=\|\partial_x\cdot\|_H,\ \ \|\cdot\|_{V^2}=\|\partial_{xx}\cdot\|_H,\ \
 {\|\cdot\|_{V^3}=\|\partial_x(\kappa \partial_x\cdot)\|_V.}
\]
Moreover, we utilize the equivalent coefficient-dependent {Hilbert} norms on $H$, $V$, $V^\ast$ and $\mathcal Y$
\begin{gather*}
 \|w\|_{{H_\rho}}=\|\sqrt{\rho}w\|_{H},\quad
 \|w\|_{{\mathcal{V}_\kappa}}=\|\sqrt{\kappa}\partial_xw\|_{H},\quad
 \|w\|_{{\mathcal{V}_\kappa^*}}=\sup_{\|v\|_{{\mathcal{V}_\kappa}}\leq 1}\langle w,v\rangle_{\Omega},
\\
\|\mathbf{z}\|_{\mathcal{Y}}=\big(\|z_1\|_{L^2(I,{H_\rho})}^2
 +\|z_2\|_{{H_\rho}}^2+\|z_3\|_{{\mathcal{V}_\kappa^*}}^2\big)^{1/2},
\end{gather*}
where $\langle\cdot,\cdot\rangle_{\Omega}$ is the duality relation on $V^*\times V$.
\par For the control space $\M$ we consider two choices, either the space of vector measures $\mathcal M(\Omega,L^2(I))$ or the space of weak{-star} measurable, $\m$-valued functions {$\LwM$}. 
\Blue{Recall that $\|u\|_{\LwM}=\|\|u(\cdot)\|_{\m}\|_{L^2(I)}$ where
$\|u(\cdot)\|_{\m}\in L^2(I)$ for any $u\in \LwM$.}
Let correspondingly $\C$ be chosen as $\mathcal C_0(\Omega,L^2(I))$ or $L^2(I,\mathcal C_0(\Omega))$
where $\mathcal{C}_0(\Omega)=\{v\in \mathcal{C}(\bar{\Omega})|\,v|_{x=0,L}=0\}$.
The following identifications of dual spaces hold
\[
 \mathcal C_0(\Omega,L^2(I))^*\cong\mathcal M(\Omega,L^2(I)),\ \
 L^2(I,\mathcal C_0(\Omega))^*\cong
 {\LwM},
\]
with the duality pairings \Blue{respectively
\begin{multline*}
\langle u,v\rangle_{\M,\C}:=\int_\Omega\int_0^Tv(x,t)\,\mathrm du(x)\,\mathrm dt,\ \
\langle u,v\rangle_{\M,\C}:=\int_0^T\int_\Omega v(t,x)\,\mathrm du(t)\,\mathrm dt\\
\end{multline*}
for any $u\in \M$ and $v\in \C$.}
\Blue{See \cite{CasasClasonKunisch2013,CembranoMendoza97,Edwards65,KunischPieperVexler2014},
where more details on the properties of these spaces can be found.}
In particular, the following embeddings hold
\begin{equation}
\label{embedding1}
 \mathcal M(\Omega,L^2(I))\hookrightarrow {\LwM}\hookrightarrow L^2(I,V^*).
\end{equation}
\section{Existence and regularity of the state}\label{sec:regularity}
\subsection{Weak formulations and preliminary existence, uniqueness and regularity results}
In this section we introduce our solution concepts for the state equation \eqref{state_equation}.
We begin with defining a weak formulation of \eqref{state_equation}.
\begin{definition}\label{def:weak1}
Let $(u,y^0,y^1)\in X\times V\times H$ with $X=\L$ or $H^1(I,V^\ast)$ or $\ML$. Then $y\in \mathcal{C}(\bar{I},V)\cap \mathcal{C}^1(\bar{I},H)$ is called a \textit{weak solution} of \eqref{state_equation} if it satisfies
the integral identity
\begin{equation}
 {B(y,v)}+\big(\rho\partial_ty(T),v(T)\big)_{H}
 =\int_I \langle u,v\rangle_{\Omega}~\mathrm dt+\big(\rho y^1,v(0)\big)_{H}~~\forall v\in L^2(I,V)\cap H^1(I,H)
\label{int_id1}
\end{equation}
{with the indefinite symmetric bilinear form
\begin{equation}
 B(y,v):=-(\rho\partial_ty,\partial_t v)_{\L}
 +(\kappa\partial_xy,\partial_xv)_{L^2(I\times\Omega)},
\label{dif bilinear_form}
\end{equation}
}
and the initial condition $y(0)=y^0$.
\end{definition}
The right-hand side in \eqref{int_id1} is well defined for $X=\ML$ too due to embeddings \eqref{embedding1}.
\begin{remark}\label{rem:vT_zero}
It is possible (and more common) to suppose that $v(T)=0$ in \eqref{int_id1}
when the last term on the left disappears (for example, see \cite{Zlotnik94}).
This leads to an equivalent formulation.
To check this, it is enough to replace there $v$ by $v\beta_\delta$, where $\beta_\delta(t)=\min\big(1,(T-t)/\delta\big)$, $0<\delta<T$.
Then $\partial_t(v\beta_\delta)=(\partial_tv)\beta_\delta-(1/\delta)v\chi_{(T-\delta,T)}$, where $\chi_{(T-\delta,T)}$ is the characteristic function of $(T-\delta,T)$.
Passing to the limit as $\delta\to 0$ with the help of the dominated convergence theorem and the properties of $y$ and $v$ leads to the result.
\end{remark}
\par Another definition of the weak solution is possible.
\begin{definition}\label{def:weak2}
Let $(u,y^0,y^1)\in X\times V\times H$ with $X=\L$ or $H^1(I,V^\ast)$ or $\ML$. A function $y\in \mathcal C(\bar I,V)\cap H^2(I,V^\ast)\hookrightarrow H^1(I,H)$ is called a weak solution of \eqref{state_equation} if it satisfies
\begin{equation}\label{weak_form_lions}
\int_I\langle\rho\partial_{tt}y,v\rangle_{\Omega}+(\kappa\partial_xy,\partial_xv)_H~\mathrm dt=\int_I\langle u,v\rangle_{\Omega}~\mathrm dt\quad~\forall v\in L^2(I,V)
\end{equation}
and $y(0)=y^0$ as well as $\partial_ty(0)=y^1$.
\end{definition}
\begin{proposition}
Definitions \ref{def:weak1} and \ref{def:weak2} (up to the property $y\in\mathcal{C}^1(\bar{I},H)$) are equivalent.
\end{proposition}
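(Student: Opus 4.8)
The plan is to establish a chain of equivalences by manipulating the test functions and integrating by parts in time. Suppose first that $y$ is a weak solution in the sense of Definition \ref{def:weak2}, so that $y\in\mathcal C(\bar I,V)\cap H^2(I,V^\ast)$ satisfies \eqref{weak_form_lions} with $y(0)=y^0$, $\partial_ty(0)=y^1$. Take any $v\in L^2(I,V)\cap H^1(I,H)$. Since $\rho\partial_{tt}y\in L^2(I,V^\ast)$ and $v\in H^1(I,H)$ with $v(0),v(T)\in H$, I would justify the integration-by-parts formula
\[
 \int_I\langle\rho\partial_{tt}y,v\rangle_\Omega\,\mathrm dt
 = \big(\rho\partial_ty(T),v(T)\big)_H-\big(\rho\partial_ty(0),v(0)\big)_H
 -\int_I(\rho\partial_ty,\partial_tv)_H\,\mathrm dt,
\]
which is a standard density argument (approximate $y$ by smooth functions in $H^2(I,V^\ast)\cap\mathcal C(\bar I,V)$, or invoke the known result that the pairing $t\mapsto\langle\rho\partial_{tt}y(t),v(t)\rangle_\Omega$ is integrable and the product rule holds a.e.). Substituting this into \eqref{weak_form_lions} and using $\partial_ty(0)=y^1$ yields exactly \eqref{int_id1}, and since $\rho\in H^1(\Omega)$ with $\rho\geq\nu$, the term $(\rho\partial_ty,\partial_tv)_H$ agrees with $(\rho\partial_ty,\partial_tv)_{\L}$; together with $y(0)=y^0$ this gives Definition \ref{def:weak1}.

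For the converse, assume $y$ satisfies \eqref{int_id1} for all $v\in L^2(I,V)\cap H^1(I,H)$ together with $y(0)=y^0$. First I would recover the interior equation: restrict to test functions $v\in\mathcal C_c^\infty(I,V)$ (so $v(0)=v(T)=0$); then \eqref{int_id1} reduces to
\[
 -\int_I(\rho\partial_ty,\partial_tv)_H\,\mathrm dt+\int_I(\kappa\partial_xy,\partial_xv)_H\,\mathrm dt
 = \int_I\langle u,v\rangle_\Omega\,\mathrm dt,
\]
which is the distributional identity $\partial_t(\rho\partial_ty)=\partial_x(\kappa\partial_xy)+u$ in $L^2(I,V^\ast)$. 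Since the right-hand side lies in $L^2(I,V^\ast)$ (using \eqref{embedding1} when $X=\ML$ and $\partial_x(\kappa\partial_x\cdot):V\to V^\ast$ boundedly because $\kappa\in H^1(\Omega)$), we get $\rho\partial_{tt}y=\partial_t(\rho\partial_ty)\in L^2(I,V^\ast)$, hence $\partial_{tt}y\in L^2(I,V^\ast)$ and $y\in H^2(I,V^\ast)$. This is precisely the regularity asserted in Definition \ref{def:weak2} (the embedding $\mathcal C(\bar I,V)\cap H^2(I,V^\ast)\hookrightarrow H^1(I,H)$ is the interpolation inequality quoted there). Now \eqref{weak_form_lions} follows from \eqref{int_id1} by running the integration by parts backwards: for $v\in L^2(I,V)\cap H^1(I,H)$,
\[
 \int_I\langle\rho\partial_{tt}y,v\rangle_\Omega\,\mathrm dt
 = \big(\rho\partial_ty(T),v(T)\big)_H-\big(\rho\partial_ty(0),v(0)\big)_H-\int_I(\rho\partial_ty,\partial_tv)_H\,\mathrm dt,
\]
and comparison with \eqref{int_id1} gives $\int_I\langle\rho\partial_{tt}y,v\rangle_\Omega+(\kappa\partial_xy,\partial_xv)_H\,\mathrm dt=\int_I\langle u,v\rangle_\Omega\,\mathrm dt+\big(\rho(\partial_ty(0)-y^1),v(0)\big)_H$. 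Choosing $v$ with $v(0)$ arbitrary in $V$ and $v(T)=0$ forces $\partial_ty(0)=y^1$, after which the identity \eqref{weak_form_lions} holds for all such $v$; a density argument extends it to all $v\in L^2(I,V)$ (time-independent test functions suffice and are dense in the appropriate sense, or one mollifies in time). Together with $y(0)=y^0$ this is Definition \ref{def:weak2}.

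The only genuinely delicate point is the justification of the temporal integration-by-parts formula and the identification of the boundary values $\partial_ty(T)$, $\partial_ty(0)\in H$: a priori $y$ is only $H^1(I,H)$, so $\partial_ty$ need not have $H$-valued traces without the extra information $\rho\partial_{tt}y\in L^2(I,V^\ast)$. In the direction Def.\ \ref{def:weak2} $\Rightarrow$ Def.\ \ref{def:weak1} this regularity is available by hypothesis; in the reverse direction it is first derived from the interior equation as above, and only then do the traces make sense — so the argument must be carried out in that order. I expect this trace/integration-by-parts lemma to be the main obstacle, and I would handle it either by the standard density argument in the space $\{w\in L^2(I,V):\partial_t(\rho\partial_tw)\in L^2(I,V^\ast)\}$ or by appealing directly to the regularity results of \cite{Zlotnik94} quoted earlier. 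The parenthetical caveat ``up to the property $y\in\mathcal C^1(\bar I,H)$'' in the statement reflects exactly that Definition \ref{def:weak1} demands $\mathcal C^1(\bar I,H)$ regularity a priori whereas Definition \ref{def:weak2} only yields it a posteriori via the embedding, so the equivalence is of the solution sets modulo this automatic upgrade.
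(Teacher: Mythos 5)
Your proposal is correct and follows essentially the same route as the paper: derive $\partial_{tt}y\in L^2(I,V^\ast)$ from the integral identity \eqref{int_id1} (via compactly supported test functions), then pass between \eqref{int_id1} and \eqref{weak_form_lions} by integration by parts in time combined with a density argument for the test functions. The paper states this in two sentences citing \cite[Chapter 1, Theorem 2.1]{lions1972non}; you have merely filled in the details, including the correct reading of the caveat ``up to $y\in\mathcal C^1(\bar I,H)$''.
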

\begin{proof}
The weak solution from Definition \ref{def:weak1} has $\partial_{tt}y\in L^2(I,V^*)$ according to the integral identity \eqref{int_id1}.
Then the equivalence of \eqref{weak_form_lions} and \eqref{int_id1} can be proved using integration by parts in time and the density of $\mathcal C^\infty(\bar I,V)$ in $L^2(I,V)\cap H^1(I,H)$, cf. \cite[Chapter 1, Theorem 2.1]{lions1972non}.
\end{proof}
\begin{proposition}\label{prop:exist weak}
\begin{enumerate}
\item Let $(u,y^0,y^1)\in X\times V\times H$ with $X=\L$ or $H^1(I,V^\ast)$. Then \eqref{state_equation} has a unique weak solution satisfying
$y\in \mathcal{C}(\bar{I},V)\cap \mathcal{C}^1(\bar{I},H)\cap H^2(I,V^\ast)$
and
\begin{gather}
 \|y\|_{\mathcal{C}(\bar{I},V)}+\|\partial_ty\|_{\mathcal{C}(\bar{I},H)}+\|\partial_{tt}y\|_{L^2(I,V^\ast)}
 \leq c\,\big(\|u\|_{X}+\|\mathbf{y}\|_{V\times H}\big).
\label{pset13}
\end{gather}
Hereafter $c>0$, $c_1>0$, etc., are independent of $y$ and the data.
\par In the case $X=H^1(I,V^\ast)$ there even holds $y\in \mathcal C^2(\bar I,V^\ast)$ as well as
\[
\|\partial_{tt}y\|_{\mathcal C(\bar I,V^\ast)}\leq c\,\big(\|u\|_{H^1(I,V^\ast)}+\|\mathbf{y}\|_{V\times H}\big).
\]
\item Let $(u,y^0,y^1)\in X \times V^2\times V$ with $X=L^2(I,V)$ or $H^1(I,H)$. Then the weak solution y satisfies
$y\in \mathcal{C}(\bar{I},V^2)\cap \mathcal{C}^1(\bar{I},V)\cap H^2(I,H)$
and
\begin{equation}
 \|y\|_{\mathcal{C}(\bar{I},V^2)}+\|\partial_ty\|_{\mathcal{C}(\bar{I},V)}+\|\partial_{tt}y\|_{L^2(I,H)}\leq c\,\big(\|u\|_{X}
 +\|\mathbf{y}\|_{V^2\times V}\big).
\label{pset15}
\end{equation}
\par In the case $X=H^1(I,H)$ there even holds $y\in \mathcal C^2(\bar I,H)$ as well as
\[
\|\partial_{tt}y\|_{\mathcal C(\bar I,H)}\leq c\,\big(\|u\|_{H^1(I,H)}+\|\mathbf{y}\|_{V^2\times V}\big).
\]
Moreover, $y$ satisfies the equation $\rho\partial_{tt}y-\partial_x(\kappa\partial_{x}y)=u$ in $L^2(I\times\Omega)$,
i.e. it is the strong solution.
\end{enumerate}
\end{proposition}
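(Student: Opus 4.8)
The natural approach is the Faedo--Galerkin method together with energy estimates, followed by a short bootstrap that reads the remaining time-regularity directly off the equation. \emph{Step 1 (approximation and basic energy estimate).} Use as Galerkin basis the eigenfunctions $\{w_k\}_{k\ge1}\subset V^2$ of $v\mapsto-\rho^{-1}\partial_x(\kappa\partial_xv)$ with homogeneous Dirichlet data, which are orthonormal in $H_\rho$ and orthogonal in $\mathcal V_\kappa$ (one-dimensional elliptic regularity with $H^1$ coefficients puts them in $V^2$). Let $y_N(t)=\sum_{k\le N}g_k(t)w_k$ solve the resulting linear system of ODEs with initial data the $H_\rho$-projections of $y^0$ and $y^1$; test with $\partial_ty_N\in\spa\{w_1,\dots,w_N\}$ to obtain
\[
\tfrac12\tfrac{d}{dt}\big(\|\partial_ty_N\|_{H_\rho}^2+\|y_N\|_{\mathcal V_\kappa}^2\big)=\langle u,\partial_ty_N\rangle_\Omega .
\]
For $X=\L$ bound the right-hand side by $\|u\|_H\|\partial_ty_N\|_H$; for $X=H^1(I,V^\ast)$ first integrate by parts in time, $\int_0^t\langle u,\partial_ty_N\rangle=\langle u(t),y_N(t)\rangle-\langle u(0),y_N(0)\rangle-\int_0^t\langle\partial_tu,y_N\rangle$, using $H^1(I,V^\ast)\hookrightarrow\mathcal C(\bar I,V^\ast)$. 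Young's and Gronwall's inequalities then give the bound \eqref{pset13} for $y_N$, uniformly in $N$.

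\emph{Step 2 (passage to the limit, continuity, initial data, uniqueness).} Weak-$\ast$ compactness yields a subsequence with $y_N\overset{\ast}{\rightharpoonup}y$ in $L^\infty(I,V)$ and $\partial_ty_N\overset{\ast}{\rightharpoonup}\partial_ty$ in $L^\infty(I,H)$; passing to the limit in the Galerkin identity against $w_k\varphi(t)$ and using density shows that $y$ satisfies \eqref{weak_form_lions}, whence $\partial_{tt}y=\rho^{-1}\big(u+\partial_x(\kappa\partial_xy)\big)\in L^2(I,V^\ast)$, and integrating by parts in $t$ against suitable test functions recovers $y(0)=y^0$, $\partial_ty(0)=y^1$. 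The membership $y\in\mathcal C(\bar I,V)\cap\mathcal C^1(\bar I,H)$ follows from the Lions--Magenes regularity lemma (which upgrades weak to strong continuity), and \eqref{pset13} survives the limit by weak lower semicontinuity. For uniqueness, the difference $w$ of two solutions (zero source and data) cannot be tested with $\partial_tw$; instead one tests \eqref{weak_form_lions} with $v(s)=-\int_s^tw(\sigma)\,d\sigma$ on $(0,t)$ and $v\equiv0$ on $(t,T)$, which after integration by parts in time produces the exact energy identity $\tfrac12\|w(t)\|_{H_\rho}^2+\tfrac12\|v(0)\|_{\mathcal V_\kappa}^2=0$, hence $w\equiv0$.

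\emph{Step 3 (higher space-regularity; strong solution; $\mathcal C^2$ in time).} For $(u,y^0,y^1)\in X\times V^2\times V$ with $X=L^2(I,V)$ or $H^1(I,H)$ the (already unique) weak solution satisfies more: repeat Steps 1--2 but test the Galerkin equation with $-\rho^{-1}\partial_x(\kappa\partial_x\partial_ty_N)\in\spa\{w_1,\dots,w_N\}$, which makes the left-hand side $\tfrac12\tfrac{d}{dt}$ of an energy controlling $\|\partial_ty_N\|_{\mathcal V_\kappa}^2$ and $\|\partial_x(\kappa\partial_xy_N)\|_H^2\simeq\|y_N\|_{V^2}^2$ (one-dimensional elliptic isomorphism $V^2\to H$ with $H^1$ coefficients), and bound the right-hand side as in Step 1; this gives \eqref{pset15}. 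Since now $\rho\partial_{tt}y=u+\partial_x(\kappa\partial_xy)\in L^2(I,H)$, $y$ is the strong solution. The $\mathcal C^2$-in-time statements are then immediate from the equation: if $X=H^1(I,V^\ast)$ then $u\in\mathcal C(\bar I,V^\ast)$ and $\partial_x(\kappa\partial_xy)\in\mathcal C(\bar I,V^\ast)$ since $y\in\mathcal C(\bar I,V)$, so, as multiplication by $\rho^{-1}\in H^1(\Omega)\cap\mathcal C(\bar\Omega)$ is bounded on $V$ and hence on $V^\ast$, $\partial_{tt}y=\rho^{-1}(u+\partial_x(\kappa\partial_xy))\in\mathcal C(\bar I,V^\ast)$ (hence $y\in\mathcal C^2(\bar I,V^\ast)$) with the claimed bound; the case $X=H^1(I,H)$ is identical with $V^\ast$ replaced by $H$ and $y\in\mathcal C(\bar I,V^2)$.

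The technical care concentrates on the minimal-regularity steps: both the uniqueness proof and the upgrade from weak to strong continuity in $\mathcal C(\bar I,V)$ need the Lions-type test-function device rather than a naive energy estimate, and throughout, the coefficients being only $H^1(\Omega)$ forces one to replace $W^{1,\infty}$-type manipulations by arguments based on $H^1(\Omega)\hookrightarrow\mathcal C(\bar\Omega)$ and the resulting boundedness of multiplication by $\rho^{\pm1}$ and $\kappa$ on $V$ and $V^\ast$.
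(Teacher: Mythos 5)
Your proof is correct, but it takes a genuinely different route from the paper's: the paper does not prove Proposition \ref{prop:exist weak} at all --- its entire proof is the citation ``see \cite[Propositions 1.1 and 1.3]{Zlotnik94}'' --- whereas you supply a self-contained Faedo--Galerkin argument. Your construction is the standard Lions--Magenes/Evans one, and the two choices that make it work under the paper's weak hypotheses are the right ones: taking as basis the eigenfunctions of $v\mapsto-\rho^{-1}\partial_x(\kappa\partial_xv)$, so that the Galerkin spaces are invariant under that operator, the projected initial data are stable in $V$ and $V^2$, and the higher-order test function $-\rho^{-1}\partial_x(\kappa\partial_x\partial_ty_N)$ in Step 3 is admissible; and systematically replacing $W^{1,\infty}$-type coefficient manipulations by the one-dimensional facts that $H^1(\Omega)$ is a Banach algebra embedded in $\mathcal C(\bar\Omega)$ and that multiplication by $\rho^{\pm1}$, $\kappa^{\pm1}$ is therefore bounded on $V$ and, by duality, on $V^\ast$ (this is also what puts the eigenfunctions in $V^2$ and identifies $V^2$ with the domain of the operator). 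The two delicate points you single out --- uniqueness via the test function $v(s)=-\int_s^t w(\sigma)\,\mathrm d\sigma$, and the upgrade from $L^\infty$-in-time to strong continuity in the energy space --- are precisely where a naive energy argument fails at this regularity level, and your treatment of both is the accepted one. What the paper's approach buys is brevity and coherence: the same reference \cite{Zlotnik94} supplies the discrete stability and error estimates used in Sections \ref{sec:error_state}--\ref{sec:error_adjoint}, so quoting it for the continuous a priori bounds keeps constants and solution notions aligned with the FEM analysis. What your approach buys is independence from that reference and explicit visibility of exactly where the hypotheses $\rho,\kappa\in H^1(\Omega)$, $\rho,\kappa\ge\nu>0$ enter.
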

\begin{proof}
For example, see \cite[Propositions 1.1 and 1.3]{Zlotnik94}.
\end{proof}
Item 2 ensures the regularity of weak solution for more regular data.
\par For less regular data $(u,y^0,y^1)\in L^2(I,V^\ast)\times H\times V^\ast$ one can use other weak formulations.
To state the first of them, we define the integration operator
$(\mathcal{I}_tv)(t):=\int_0^tv(s)~\mathrm ds$ and its adjoint $(\mathcal{I}_t^\ast v)(t):=\int_t^Tv(s)~\mathrm ds$ on $\bar{I}$.
\begin{definition}\label{def:weaker1}
Let $(u,y^0,y^1)\in L^2(I,V^\ast)\times H\times V^\ast$. A function $y\in \mathcal{C}(\bar{I},H)$ with $\mathcal{I}_ty\in \mathcal{C}(\bar{I},V)$ is called a \textit{weaker solution} of \eqref{state_equation} if it satisfies
\begin{multline}
 \int_I-(\rho y,\partial_t v)_H+(\kappa\partial_x\mathcal{I}_ty,\partial_xv)_H~\mathrm dt
 +\big(\rho y(T),v(T)\big)_{H}\\
 =\int_I\big\langle u,\mathcal{I}_t^\ast v\big\rangle_{\Omega}~\mathrm dt
 +\big(\rho y^0,v(0)\big)_{H}
 +\langle \rho y^1, (\mathcal{I}_t^\ast v)(0)\rangle_{\Omega}\quad \forall v\in L^2(I,V)\cap H^1(I,H).
\label{int_id2}
\end{multline}
\end{definition}
As in the case of Definition \ref{def:weak1},
it is sufficient to take $v(T)=0$ in \eqref{int_id2}, cf. Remark \ref{rem:vT_zero}.
\begin{proposition}\label{prop:exist weaker}
Let $(u,y^0,y^1)\in L^2(I,V^\ast)\times H\times V^\ast$. Then there exists a unique weaker solution $y\in \mathcal C(\bar I,H)\cap \mathcal{C}^1(\bar{I},V^\ast)$ and it satisfies the bound
\begin{equation*}
 \|y\|_{\mathcal{C}(\bar{I},H)}+\|\mathcal{I}_ty\|_{\mathcal{C}(\bar{I},V)}+\|\partial_ty\|_{\mathcal{C}(\bar{I},V^\ast)}\leq c\,\big(\|u\|_{L^2(I,V^\ast)}+\|\mathbf{y}\|_{H\times V^*}\big).
\label{pset11}
\end{equation*}
\end{proposition}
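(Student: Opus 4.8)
The plan is to reduce the statement to the regularity theory for the weak solution already available in Proposition~\ref{prop:exist weak} by passing to the time primitive $w:=\mathcal{I}_ty$. Formally, integrating the equation in \eqref{state_equation} in time over $(0,t)$ and using $w(0)=0$, $\partial_tw=y$ and the (weakly imposed) condition $\partial_ty(0)=y^1$, one finds that $w$ should solve
\[
\rho\partial_{tt}w-\partial_x(\kappa\partial_xw)=\mathcal{I}_tu+\rho y^1=:\tilde u\quad\text{in }I\times\Omega ,
\]
with $w(0)=0\in V$ and $\partial_tw(0)=y^0\in H$. The first thing to check is $\tilde u\in H^1(I,V^\ast)$: one has $\|\mathcal{I}_tu\|_{H^1(I,V^\ast)}\le c\|u\|_{L^2(I,V^\ast)}$, while $\rho y^1$ is constant in time and lies in $V^\ast$ because in one space dimension multiplication by $\rho\in H^1(\Omega)$ maps $V$ boundedly into $V$ — write $\partial_x(\rho v)=(\partial_x\rho)v+\rho\partial_xv$ and use $V\hookrightarrow L^\infty(\Omega)$, $(\rho v)|_{\partial\Omega}=0$ — hence, by duality, maps $V^\ast$ boundedly into $V^\ast$. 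Therefore $\|\tilde u\|_{H^1(I,V^\ast)}\le c(\|u\|_{L^2(I,V^\ast)}+\|y^1\|_{V^\ast})$.

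Given this, I would \emph{define} $w$ to be the unique weak solution of the auxiliary problem above with data $(\tilde u,0,y^0)\in H^1(I,V^\ast)\times V\times H$, furnished by Proposition~\ref{prop:exist weak} (item~1) in the case $X=H^1(I,V^\ast)$; that case in particular gives $w\in\mathcal{C}(\bar I,V)\cap\mathcal{C}^1(\bar I,H)\cap\mathcal{C}^2(\bar I,V^\ast)$ together with
\[
\|w\|_{\mathcal{C}(\bar I,V)}+\|\partial_tw\|_{\mathcal{C}(\bar I,H)}+\|\partial_{tt}w\|_{\mathcal{C}(\bar I,V^\ast)}\le c\big(\|\tilde u\|_{H^1(I,V^\ast)}+\|y^0\|_H\big)\le c\big(\|u\|_{L^2(I,V^\ast)}+\|\mathbf{y}\|_{H\times V^\ast}\big).
\]
Setting $y:=\partial_tw$ then yields at once $y\in\mathcal{C}(\bar I,H)\cap\mathcal{C}^1(\bar I,V^\ast)$ and $\mathcal{I}_ty=w-w(0)=w\in\mathcal{C}(\bar I,V)$, as well as the asserted bound. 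It remains to verify that this $y$ satisfies \eqref{int_id2}. For that I would start from the integral identity \eqref{int_id1} for $w$ (in the form with $v(T)=0$, admissible by Remark~\ref{rem:vT_zero}), with source $\tilde u$ and initial velocity $y^0$, substitute $\partial_tw=y$ and $w=\mathcal{I}_ty$, and rewrite the two source contributions $\int_I\langle\mathcal{I}_tu,v\rangle_\Omega\,\mathrm dt$ and $\int_I\langle\rho y^1,v\rangle_\Omega\,\mathrm dt$ as $\int_I\langle u,\mathcal{I}_t^\ast v\rangle_\Omega\,\mathrm dt$ and $\langle\rho y^1,(\mathcal{I}_t^\ast v)(0)\rangle_\Omega$, using the adjointness of $\mathcal{I}_t$ and $\mathcal{I}_t^\ast$ (Fubini) together with $(\mathcal{I}_t^\ast v)(0)=\int_Iv\,\mathrm dt$; the term $(\rho y(T),v(T))_H$ in \eqref{int_id2} is absent for such $v$, and Remark~\ref{rem:vT_zero} again upgrades the identity to all $v\in L^2(I,V)\cap H^1(I,H)$.

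For uniqueness, suppose $y$ is a weaker solution with $(u,y^0,y^1)=(0,0,0)$ and put $w:=\mathcal{I}_ty$, so that $w\in\mathcal{C}(\bar I,V)\cap\mathcal{C}^1(\bar I,H)$ with $w(0)=0$ and $\partial_tw=y$. With zero data, after the substitution $y=\partial_tw$, $\mathcal{I}_ty=w$, identity \eqref{int_id2} becomes $B(w,v)+(\rho\partial_tw(T),v(T))_H=0$ for all $v\in L^2(I,V)\cap H^1(I,H)$, which is exactly the integral identity \eqref{int_id1} characterizing the weak solution of \eqref{state_equation} with data $(0,0,0)$; since also $w(0)=0$, Proposition~\ref{prop:exist weak} (item~1) forces $w\equiv 0$, hence $y=\partial_tw\equiv 0$. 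The only genuinely delicate points I anticipate are two pieces of bookkeeping: first, that $\tilde u$ lands in precisely the class $H^1(I,V^\ast)$ for which Proposition~\ref{prop:exist weak} provides the sharpened $\mathcal{C}^2(\bar I,V^\ast)$-regularity — this is what converts $\partial_ty=\partial_{tt}w$ from $L^2(I,V^\ast)$ to $\mathcal{C}(\bar I,V^\ast)$, giving $y\in\mathcal{C}^1(\bar I,V^\ast)$ — and second, the careful matching of the $\mathcal{I}_t$/$\mathcal{I}_t^\ast$ terms and the vanishing of the end-time boundary contributions when passing between \eqref{int_id1} for $w$ and \eqref{int_id2} for $y$.
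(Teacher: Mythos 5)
Your argument is correct. The paper's own ``proof'' of this proposition is only a citation of \cite[Proposition 1.2]{Zlotnik94}, so there is no in-text argument to compare against; but the reduction you use --- passing to the time primitive $w=\mathcal I_ty$, which solves the integrated problem with right-hand side $\tilde u=\mathcal I_tu+\rho y^1\in H^1(I,V^\ast)$ and data $(0,y^0)\in V\times H$, and then invoking Proposition~\ref{prop:exist weak} with $X=H^1(I,V^\ast)$ to get $w\in\mathcal C(\bar I,V)\cap\mathcal C^1(\bar I,H)\cap\mathcal C^2(\bar I,V^\ast)$ --- is exactly the device the authors deploy in the very next proof (equivalence of the weaker and very weak solutions), via the auxiliary problem \eqref{equation_primitive} and the assignment $y=\partial_t\tilde y$. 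Your bookkeeping is sound at the points that matter: the Fubini identities $\int_I\langle\mathcal I_tu,v\rangle_\Omega\,\mathrm dt=\int_I\langle u,\mathcal I_t^\ast v\rangle_\Omega\,\mathrm dt$ and $\int_I\langle\rho y^1,v\rangle_\Omega\,\mathrm dt=\langle\rho y^1,(\mathcal I_t^\ast v)(0)\rangle_\Omega$ turn \eqref{int_id1} for $w$ (with $v(T)=0$) into \eqref{int_id2} for $y$; the observation that multiplication by $\rho\in H^1(\Omega)$ preserves $V$ and hence, by duality, $V^\ast$ is what makes $\rho y^1$ a legitimate $V^\ast$-valued (constant-in-time) source in one dimension; and the uniqueness step correctly notes that a weaker solution with zero data has a primitive $w=\mathcal I_ty\in\mathcal C(\bar I,V)\cap\mathcal C^1(\bar I,H)$, $w(0)=0$, satisfying \eqref{int_id1} with zero data, so Proposition~\ref{prop:exist weak} forces $w\equiv 0$. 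In effect you have supplied a self-contained proof of a statement the paper delegates to the reference, using only ingredients already present in the paper.
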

\begin{proof}
See \cite[Proposition 1.2]{Zlotnik94}.
\end{proof}
\par We infer that there are other weak formulations of \eqref{state_equation} for solutions $y\in \mathcal C(\bar I,H)\cap \mathcal C^1(\bar I,V^\ast)$. One can use the concept of very weak solutions.
\begin{definition}\label{def:weaker2}
Let $(u,y^0,y^1)\in L^2(I,V^\ast)\times H\times V^\ast$. A function $y\in \mathcal C(\bar I,H)\cap \mathcal C^1(\bar I,V^\ast)$ satisfying
\begin{multline}
 \int_I\big(y,\rho\partial_{tt} v-\partial_x(\kappa\partial_xv)\big)_H~\mathrm dt
 -\big(\rho y(T),\partial_tv(T)\big)_{H}+\langle \rho\partial_ty(T),v(T)\rangle_{\Omega}\\
 =\int_I \langle u,v\rangle_{\Omega}~\mathrm dt
 -\big(\rho y^0,\partial_tv(0)\big)_{H}+\langle\rho y^1,v(0)\rangle_{\Omega}
\label{int_id4}
\end{multline}
for any $v\in L^2(I,V^2)\cap H^2(I,H)\hookrightarrow H^1(I,V)$ is called a \textit{very weak solution} of \eqref{state_equation}.
\end{definition}
Actually, these two {last} solution concepts are equivalent for the considered data spaces.
\begin{theorem}
Definitions \ref{def:weaker1} and \ref{def:weaker2} are equivalent.
\end{theorem}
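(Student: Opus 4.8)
The plan is to prove the two implications separately, leaning on Proposition~\ref{prop:exist weaker} (well-posedness and regularity of the weaker solution) and Proposition~\ref{prop:exist weak}(2), so that no ``hidden regularity'' for very weak solutions has to be re-derived by hand.

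\emph{Step 1: every weaker solution is a very weak solution.} Let $y$ be a weaker solution; by Proposition~\ref{prop:exist weaker}, $y\in\mathcal{C}(\bar I,H)\cap\mathcal{C}^1(\bar I,V^\ast)$, $\mathcal{I}_ty\in\mathcal{C}(\bar I,V)$ and \eqref{int_id2} holds. First, inserting into \eqref{int_id2} the test functions $t\mapsto\psi(t)w$ with $\psi\in\mathcal{C}_c^\infty(0,T)$, $w\in V$, and using the adjointness $\int_I\langle u,\mathcal{I}_t^\ast\cdot\rangle_\Omega\,dt=\int_I\langle\mathcal{I}_tu,\cdot\rangle_\Omega\,dt$ together with an integration by parts in time, one recovers the once-integrated equation $\rho\partial_ty-\partial_x(\kappa\partial_x\mathcal{I}_ty)=\mathcal{I}_tu+\rho y^1$ as an identity in $\mathcal{C}(\bar I,V^\ast)$; evaluating it at $t=T$ gives $\langle\rho\partial_ty(T)-(\mathcal{I}_tu)(T)-\rho y^1,w\rangle_\Omega=-(\kappa\partial_x\mathcal{I}_ty(T),\partial_xw)_H$ for all $w\in V$. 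Now fix an arbitrary test function $v\in L^2(I,V^2)\cap H^2(I,H)\hookrightarrow H^1(I,V)$ for \eqref{int_id4} and substitute $\eta:=-\partial_tv\in L^2(I,V)\cap H^1(I,H)$ into \eqref{int_id2}. Using $\mathcal{I}_t^\ast(-\partial_tv)=v-v(T)$, hence $(\mathcal{I}_t^\ast\eta)(0)=v(0)-v(T)$, together with $\eta(T)=-\partial_tv(T)$, $\eta(0)=-\partial_tv(0)$, $\int_Iu\,dt=(\mathcal{I}_tu)(T)$ and $\partial_x\partial_tv=\partial_t\partial_xv$, a rearrangement shows that \eqref{int_id2} for $\eta$ is equivalent to \eqref{int_id4} for $v$ together with the relation
\begin{equation*}
 \int_I(y,\partial_x(\kappa\partial_xv))_H\,dt-\int_I(\kappa\partial_x\mathcal{I}_ty,\partial_t\partial_xv)_H\,dt=\langle\rho\partial_ty(T)-(\mathcal{I}_tu)(T)-\rho y^1,v(T)\rangle_\Omega.
\end{equation*}

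To verify this last relation, write $y=\partial_t\mathcal{I}_ty$ in the first integral and integrate by parts in time; since $\mathcal{I}_ty(0)=0$ and $\mathcal{I}_ty(t)\in V$ vanishes on $\partial\Omega$ for every $t$, every surviving spatial derivative may be shifted back onto $\mathcal{I}_ty$, and the first integral becomes $-(\kappa\partial_x\mathcal{I}_ty(T),\partial_xv(T))_H+\int_I(\kappa\partial_x\mathcal{I}_ty,\partial_t\partial_xv)_H\,dt$. The two bulk integrals then cancel, the left-hand side reduces to $-(\kappa\partial_x\mathcal{I}_ty(T),\partial_xv(T))_H$, and this equals the right-hand side by the once-integrated equation at $t=T$ and one further spatial integration by parts (using $v(T)\in V$). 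Hence \eqref{int_id4} holds for all admissible $v$, i.e.\ $y$ is a very weak solution.

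\emph{Step 2: uniqueness of very weak solutions.} By linearity it suffices to show that a very weak solution $y$ with $u=0$, $y^0=0$, $y^1=0$ vanishes. For arbitrary $f\in\mathcal{C}_c^\infty(I\times\Omega)$ let $\varphi$ solve the terminal value problem $\rho\partial_{tt}\varphi-\partial_x(\kappa\partial_x\varphi)=\rho f$ on $I\times\Omega$, $\varphi=0$ on $I\times\partial\Omega$, $\varphi(T)=0$, $\partial_t\varphi(T)=0$. Since the equation is invariant under $t\mapsto T-t$ and $\rho f\in H^1(I,H)$ with zero Cauchy data in $V^2\times V$, Proposition~\ref{prop:exist weak}(2) gives $\varphi\in\mathcal{C}(\bar I,V^2)\cap\mathcal{C}^1(\bar I,V)\cap H^2(I,H)$, which is a legitimate test function for \eqref{int_id4} and satisfies $\varphi(T)=\partial_t\varphi(T)=0$. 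Plugging $\varphi$ into \eqref{int_id4}, all boundary contributions drop ($\varphi$ has zero terminal data, $y$ has zero initial data) and the right-hand side vanishes, so $\int_I(\rho y,f)_H\,dt=\int_I(y,\rho\partial_{tt}\varphi-\partial_x(\kappa\partial_x\varphi))_H\,dt=0$. As $\mathcal{C}_c^\infty(I\times\Omega)$ is dense in $L^2(I\times\Omega)$, we get $\rho y=0$, hence $y=0$ because $\rho\ge\nu>0$.

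\emph{Step 3: conclusion and the main obstacle.} Fix $(u,y^0,y^1)\in L^2(I,V^\ast)\times H\times V^\ast$. If $y$ is a weaker solution, Step~1 shows it is a very weak solution. Conversely, if $y$ is a very weak solution, Proposition~\ref{prop:exist weaker} provides a weaker solution $\tilde y$ for the same data, which by Step~1 is also a very weak solution; Step~2 then forces $y=\tilde y$, so $y$ is a weaker solution (in particular $y\in\mathcal{C}(\bar I,H)$ with $\mathcal{I}_ty\in\mathcal{C}(\bar I,V)$). Thus the two notions coincide. The only delicate point is the bookkeeping in Step~1: because $y$ is merely $L^2$ in space and has no boundary trace, one cannot integrate by parts in $x$ against $y$ directly, so every spatial integration by parts must be routed through $\mathcal{I}_ty\in\mathcal{C}(\bar I,V)$, and the time-integration-by-parts formulas used have to be justified in the relevant weak spaces (standard, cf.\ \cite[Chapter~1]{lions1972non} and \cite{Zlotnik94}; the reduction to test functions with $v(T)=0$ is as in Remark~\ref{rem:vT_zero}).
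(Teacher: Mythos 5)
Your proof is correct, and your first implication (weaker $\Rightarrow$ very weak) is essentially the paper's argument: test \eqref{int_id2} with $-\partial_t v$, route every spatial integration by parts through $\mathcal{I}_ty\in\mathcal{C}(\bar I,V)$, and use the once-integrated equation evaluated at $t=T$ to account for the terminal terms. Where you genuinely diverge is in the converse. The paper proves it directly: it tests \eqref{int_id4} with $\mathcal{I}_t^\ast v$ for $v\in\mathcal{C}^\infty(\bar I,V^2)$, reads off from the resulting identity that $\partial_x(\kappa\partial_x\mathcal{I}_ty)=-\rho\partial_ty+\mathcal{I}_tu+\rho y^1\in\mathcal{C}(\bar I,V^\ast)$, hence $\mathcal{I}_ty\in\mathcal{C}(\bar I,V)$ (the ``hidden regularity'' that Definition \ref{def:weaker2} does not assume), and then converts the identity into \eqref{int_id2} by density. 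You instead prove uniqueness of very weak solutions by a transposition-type duality argument (solving a smooth backward problem via Proposition \ref{prop:exist weak}(2) and time reversal) and combine it with the existence of the weaker solution from Proposition \ref{prop:exist weaker} plus your Step 1. Both routes are legitimate: the paper's buys an explicit derivation of the extra regularity of an arbitrary very weak solution from the very weak identity itself, which is conceptually informative; yours is arguably more robust and elementary, outsourcing the regularity to the quoted well-posedness result, and your uniqueness argument essentially anticipates the paper's subsequent equivalence with solutions by transposition. One presentational caveat: in your Step 1 the intermediate manipulations (in particular $\int_I(\mathcal{I}_ty,\partial_x(\kappa\partial_x\partial_tv))_H\,\mathrm dt$) are not literally defined for a general $v\in L^2(I,V^2)\cap H^2(I,H)$, since $\partial_tv$ need not take values in $V^2$; you should, as the paper does, first carry out the computation for $v\in\mathcal{C}^\infty(\bar I,V^2)$ and then invoke density of this class in $L^2(I,V^2)\cap H^2(I,H)$ — you gesture at this in your closing remark, but it deserves to be stated as the actual order of operations rather than an afterthought.
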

\begin{proof}
First of all, we consider the auxiliary integrated in $t$ problem \eqref{state_equation}:
\begin{equation}\label{equation_primitive}
\left\{\begin{aligned}
\rho\partial_{tt}\tilde y-\partial_x(\kappa\partial_{x}\tilde y)&=\mathcal I_tu+\rho y^1&&\text{in}~I\times \Omega\\
\tilde y&=0&&\text{on}~I\times\partial\Omega\\
\tilde y=0,~\partial_t \tilde y&=y^0&&\text{in}~\{0\}\times\Omega\\
\end{aligned}\right.
\end{equation}
for $(u,y^0,y^1)\in L^2(I,V^\ast)\times H\times V^\ast$. Thus, we have $\mathcal I_tu\in H^1(I,V^\ast)$. According to Proposition \ref{prop:exist weak} problem \eqref{equation_primitive} has a unique weak solution $\tilde y\in \mathcal C(\bar I,V)\cap \mathcal C^1(\bar I,H)$. Moreover, we set $y=\partial_t\tilde y$. Thus the weak formulation of \eqref{equation_primitive} involving $\tilde y$ coincides with the weaker formulation of \eqref{state_equation} involving $y$. Furthermore there holds $y=\partial_t \tilde y\in \mathcal C(\bar I,H)$ and
\begin{equation}\label{derivative_primitive}
\partial_ty
=\partial_{tt}\tilde y=(1/\rho)\big(\mathcal I_tu+\partial_x(\kappa\partial_x\tilde y)+\rho y^1\big)\in \mathcal C(\bar I,V^\ast).
\end{equation}
\par Now we take any $v\in \mathcal C^\infty(\bar I,V^2)$
and test \eqref{int_id2} with $-\partial_tv$ in the role of $v$:
\begin{multline*}
 \int_I(\rho y,\partial_{tt} v)_H
 -\big(\kappa\partial_x\mathcal{I}_ty,\partial_x\partial_tv\big)_H~\mathrm dt
 -\big(\rho y(T),\partial_tv(T)\big)_{H}\\
 =\int_I\big\langle u,-\mathcal I^\ast_t(\partial_tv)\big\rangle_{\Omega}~\mathrm dt
 -\big(\rho y^0,\partial_tv(0)\big)_{H}+\langle \rho y^1,-(\mathcal I^\ast_t\partial_tv)(0)\rangle_{\Omega}.
\end{multline*}
Next we rearrange a term on the left integrating by parts in $x$ and $t$:
\begin{multline}
-\int_I\big(\kappa\partial_x\mathcal{I}_ty,\partial_x\partial_tv\big)_H~\mathrm dt
=\int_I\big(\mathcal I_ty,L\partial_tv\big)_H~\mathrm dt\\
=-\int_I\big(y,Lv\big)_H~\mathrm dt
+\big((\mathcal I_ty)(T),Lv(T)\big)_H
\label{aux equal}
\end{multline}
with $Lv:=\partial_x(\kappa\partial_xv)$. Since $\mathcal I_ty\in \mathcal C(\bar I,V)$, we get
\begin{multline*}
 \int_I\big(y,\rho\partial_{tt} v-\partial_x(\kappa\partial_xv)\big)_H~\mathrm dt
 -\big(\rho y(T),\partial_tv(T)\big)_{H}+\langle \partial_x(\kappa\partial_x\mathcal{I}_ty)(T),v(T)\rangle_{\Omega}\\
 =\int_I\big\langle u,v\big\rangle_{\Omega}~\mathrm dt-\langle (\mathcal I_tu)(T),v(T)\rangle_{\Omega}
 -\big(\rho y^0,\partial_tv(0)\big)_{H}+\langle \rho y^1,v(0)\rangle_{\Omega}-\langle \rho y^1,v(T)\rangle_{\Omega}.
\end{multline*}
Since formula \eqref{derivative_primitive} implies that
\[
 \langle\rho\partial_ty(T),\varphi\rangle_{\Omega}=((\mathcal I_tu)(T),\varphi)_H
 +\langle\partial_x(\kappa\partial_x\mathcal{I}_ty),\varphi\rangle_{\Omega}+\langle\rho y^1,\varphi\rangle_{\Omega}\ \ \forall
 \varphi\in V,
\]
by the density of $\mathcal C^\infty(\bar I,V^2)$ in $L^2(I,V^2)\cap H^2(I,H)$
we find that $y$ is a very weak solution of \eqref{state_equation}.
\par Now let $y\in \mathcal C(\bar I,H)\cap \mathcal C^1(\bar I,V^\ast)$ be a very weak solution of \eqref{state_equation}. Then we take any $v\in \mathcal C^\infty(\bar I,V^2)$ and test \eqref{int_id4} with $\mathcal I_t^\ast v$. Thus, we get
\begin{multline}
 \int_I\big(y,-\rho\partial_{t}v)_H-(y,\partial_x(\kappa\partial_x\mathcal I_t^\ast v)\big)_H~\mathrm dt+\big(\rho y(T),v(T)\big)_{H}
\\
 =\int_I\big\langle u,\mathcal{I}_t^\ast v\big\rangle_{\Omega}~\mathrm dt
 +\big(\rho y^0,v(0)\big)_{H}+\langle \rho y^1, (\mathcal{I}_t^\ast v)(0)\rangle_{\Omega}
\label{aux equal 2}
\end{multline}
and then
\begin{multline*}
\int_I(y,-\rho\partial_{t}v)_H+(\mathcal I_ty,-\partial_x(\kappa\partial_xv))_H~\mathrm dt+\big(\rho y(T),v(T)\big)_{H}\\
=\int_I\big\langle \mathcal I_tu+\rho y^1,v\big\rangle_{\Omega}~\mathrm dt+\big(\rho y^0,v(0)\big)_{H}.
\end{multline*}
The last equation yields that $L\mathcal I_ty=-\rho\partial_t y+\mathcal I_tu+\rho y^1\in \mathcal C(\bar I,V^\ast)$. Thus $\mathcal I_ty\in \mathcal C(\bar I,V)$
and we can transform a term on the left in \eqref{aux equal 2} by replacing $v$ by $\mathcal I_t^\ast v$ in \eqref{aux equal}:
\begin{equation*}
\int_I\big(y,-\partial_x(\kappa\partial_x\mathcal I_t^\ast v)\big)_H~\mathrm dt=\int_I\big(\kappa\partial_x\mathcal I_ty,\partial_xv\big)_H~\mathrm dt.
\end{equation*}
Then the density of $\mathcal C^\infty(\bar I,V^2)$ in $L^2(I,V)\cap H^1(I,H)$ shows that $y$ is a weaker solution of \eqref{state_equation}.
\end{proof}
Moreover, there is the concept of solutions by transposition.
\begin{definition}\label{def:weaker3}
Let $(u,y^0,y^1)\in L^2(I,V^\ast)\times H\times V^\ast$. A \textit{solution by transposition} $y\in \mathcal C(\bar I,H)\cap \mathcal C^1(\bar I,V^\ast)$ of \eqref{state_equation} is defined by
\begin{multline}
 \int_I(\rho y,\phi)_H~\mathrm dt-\big(\rho y(T),p^1\big)_{H}
 +\langle \rho\partial_ty(T),p^0\rangle_{\Omega}\\
 =\int_I \langle u,p\rangle_{\Omega}~\mathrm dt+\langle\rho y^1,p(0)\rangle_{\Omega}-\big(\rho y^0,\partial_tp(0)\big)_{H}
\label{solution_transposition}
\end{multline}
for all $(\phi,p^0,p^1)\in L^2(I\times\Omega)\times V\times H$
where $p\in \mathcal C(\bar I,V)\cap \mathcal C^1(\bar I,H)$ is the weak solution of \textit{the adjoint problem} \begin{equation}\label{dual_state_equation}
\left\{\begin{aligned}
\rho\partial_{tt}p-\partial_x(\kappa\partial_{x}p)&=\rho\phi&&\text{in}~I\times \Omega\\
p&=0&&\text{on}~I\times\partial\Omega\\
p=p^0,~\partial_t p&=p^1&&\text{in}~\{T\}\times\Omega\\
\end{aligned}\right.
\end{equation}
\end{definition}
\begin{proposition}
Definitions \ref{def:weaker2} and \ref{def:weaker3} are equivalent too.
\end{proposition}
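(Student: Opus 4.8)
Both definitions require the same regularity $y\in\mathcal C(\bar I,H)\cap\mathcal C^1(\bar I,V^\ast)$, so only the equivalence of the integral identities \eqref{int_id4} and \eqref{solution_transposition} is at stake. The plan rests on one elementary observation: for $v\in L^2(I,V^2)\cap H^2(I,H)$, the test class in \eqref{int_id4}, set $\phi_v:=\partial_{tt}v-\rho^{-1}\partial_x(\kappa\partial_x v)$, $p^0_v:=v(T)$, $p^1_v:=\partial_t v(T)$. Using $\rho,\kappa\in H^1(\Omega)\hookrightarrow L^\infty(\Omega)$, $\rho\geq\nu>0$ and the one-dimensional Sobolev embedding, $\phi_v\in L^2(I\times\Omega)$; using $L^2(I,V^2)\cap H^2(I,H)\hookrightarrow H^1(I,V)\hookrightarrow\mathcal C(\bar I,V)$ and $H^2(I,H)\hookrightarrow\mathcal C^1(\bar I,H)$, $p^0_v\in V$ and $p^1_v\in H$. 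Since $v$ has this regularity and satisfies $\rho\partial_{tt}v-\partial_x(\kappa\partial_x v)=\rho\phi_v\in L^2(I,H)$ with the terminal values $p^0_v,p^1_v$, uniqueness in Proposition \ref{prop:exist weak} (applied to \eqref{dual_state_equation} after $t\mapsto T-t$) identifies $v$ with the weak solution $p$ of \eqref{dual_state_equation} for the data $(\phi_v,p^0_v,p^1_v)$. Writing out \eqref{solution_transposition} with this $p=v$ and using $(\rho y,\phi_v)_H=\big(y,\rho\partial_{tt}v-\partial_x(\kappa\partial_x v)\big)_H$, one obtains exactly \eqref{int_id4}; thus the two identities agree term by term under this correspondence.

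One implication is then immediate: a solution by transposition satisfies \eqref{solution_transposition} for \emph{all} admissible $p$, in particular for those arising from a $v$ as above, hence satisfies \eqref{int_id4} for every $v\in L^2(I,V^2)\cap H^2(I,H)$ and is a very weak solution. For the converse, let $y$ satisfy \eqref{int_id4} and let $(\phi,p^0,p^1)\in L^2(I\times\Omega)\times V\times H$ with adjoint solution $p$ be arbitrary. The difficulty is that such a $p$ lies only in $\mathcal C(\bar I,V)\cap\mathcal C^1(\bar I,H)$ and generally not in $L^2(I,V^2)\cap H^2(I,H)$, so it cannot be used directly in \eqref{int_id4}. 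I would approximate: pick $(\phi_n,p^0_n,p^1_n)$ with $\rho\phi_n\in L^2(I,V)$, $p^0_n\in V^2$, $p^1_n\in V$ and $\phi_n\to\phi$ in $L^2(I\times\Omega)$, $p^0_n\to p^0$ in $V$, $p^1_n\to p^1$ in $H$ (possible since $V^2$ is dense in $V$, $V$ in $H$, and $\{w:\rho w\in L^2(I,V)\}$ is dense in $L^2(I\times\Omega)$). By Proposition \ref{prop:exist weak}(2) the corresponding adjoint solutions $p_n$ lie in $L^2(I,V^2)\cap H^2(I,H)$ and solve \eqref{dual_state_equation} strongly, so each is admissible in \eqref{int_id4}; by Proposition \ref{prop:exist weak}(1) applied to $p_n-p$, $\|p_n-p\|_{\mathcal C(\bar I,V)}+\|\partial_t(p_n-p)\|_{\mathcal C(\bar I,H)}\to0$. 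Now \eqref{int_id4} for $v=p_n$ is, by the observation above, precisely \eqref{solution_transposition} for the data $(\phi_n,p^0_n,p^1_n)$ with solution $p_n$; letting $n\to\infty$ and checking that each term passes to the limit — $\int_I(\rho y,\phi_n)_H\,\mathrm dt\to\int_I(\rho y,\phi)_H\,\mathrm dt$, $\int_I\langle u,p_n\rangle_\Omega\,\mathrm dt\to\int_I\langle u,p\rangle_\Omega\,\mathrm dt$ since $p_n\to p$ in $L^2(I,V)$ and $u\in L^2(I,V^\ast)$, and analogously for $(\rho y(T),p^1_n)_H$, $\langle\rho\partial_t y(T),p^0_n\rangle_\Omega$, $\langle\rho y^1,p_n(0)\rangle_\Omega$, $(\rho y^0,\partial_t p_n(0))_H$ — one recovers \eqref{solution_transposition} for $(\phi,p^0,p^1)$ and $p$, so $y$ is a solution by transposition.

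I expect the density step in the converse to be the only real obstacle: the adjoint source $\rho\phi$ attached to an admissible transposition test function is merely in $L^2(I\times\Omega)$, so one must approximate the adjoint data by data smooth enough for Proposition \ref{prop:exist weak}(2) to yield $p_n\in L^2(I,V^2)\cap H^2(I,H)$, while simultaneously retaining $p_n\to p$ in $\mathcal C(\bar I,V)\cap\mathcal C^1(\bar I,H)$ through the stability estimate \eqref{pset13}; everything else is routine bookkeeping with the endpoint pairings.
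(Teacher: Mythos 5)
Your proof is correct and follows essentially the same route as the paper: both directions rest on identifying the transposition pairing for adjoint data $(\phi_v,v(T),\partial_tv(T))$ generated by a test function $v\in L^2(I,V^2)\cap H^2(I,H)$ with the very weak identity \eqref{int_id4}, combined with a density argument in the adjoint data together with the regularity and stability statements of Proposition \ref{prop:exist weak}. The only (harmless) difference is that the paper runs its second density argument over $\mathcal C^\infty(\bar I,V^2)$ for the direction ``transposition $\Rightarrow$ very weak,'' whereas you observe that every admissible $v$ is already the adjoint solution of its own data, which makes that direction immediate.
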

\begin{proof}
For $\phi \in H^1(I,H)$ or $L^2(I,V)$, $p^0\in V^2$ and $p^1\in V$ there holds $p\in \mathcal C(\bar I,V^2)\cap \mathcal C^1(\bar I,V)\cap H^2(I,H)$, see Proposition \ref{prop:exist weak}. Due to the density of $H^1(I,H)$ resp. $L^2(I,V)$ in $L^2(I\times \Omega)$ as well as $V^2$ in $V$ and $V$ in $H$ a very weak solution is a solution by transposition. Now let $p\in \mathcal C^{\infty}(\bar I,V^2)$ and set $\phi=\partial_{tt}p-(1/\rho)\partial_x(\kappa\partial_{x}p)\in \mathcal C^{\infty}(\bar I,H)$, $p^0=p(T)\in V^2$ and $p^1=\partial_tp(T)\in V^2$. Thus $p$ is the solution of \eqref{dual_state_equation}. Then the density of $\mathcal C^{\infty}(\bar I,V^2)$ in $L^2(I,V^2)\cap H^2(I,H)$ implies that a solution by transposition is a very weak solution.
\end{proof}
\begin{remark}
For $(u,y^0,y^1)\in L^2(I,H)\times V\times H$, the weaker solution coincides with the weak one.
\end{remark}
\subsection{Existence and regularity of the state}
In this section we study the existence, uniqueness and regularity of solution of the state equation for measure valued source terms. We will carry out the analysis for both control spaces.
We use the distinct properties of each space in order to show improved regularity of the state.
\subsubsection{The control space $\mathcal M(\Omega,L^2(I))$}
\label{pros1}
The space $\mathcal M(\Omega,L^2(I))$ is not so broad as ${\LwM}$ and contains no moving point sources
but
contains the standing $\delta$-sources \eqref{control}.
Therefore, we expect that the state has better regularity properties in this case and prove that
$y\in \mathcal C(\bar{I},V)\cap \mathcal C^1(\bar{I},H)$.
The proof will be based on a priori bound and a density argument. First we state the following density result.
\begin{lemma}\label{weakstardensity}
Let $u\in \ML$. Then there exists a sequence
\linebreak
$\{u_n\}\subset {C_c^\infty(\Omega,L^2(I))}$
such that
\begin{equation}\label{weakstarseq}
u_n\rightharpoonup^\ast u~~\text{in}~~\ML~~\text{as}~~n\to\infty,~~\|u_n\|_{\ML}\leq\|u\|_{\ML}~\forall n\geq 1.
\end{equation}
\end{lemma}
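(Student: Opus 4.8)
The plan is to build $\{u_n\}$ by a two-stage regularization of $u$: first push the (possibly boundary-touching) support of $u$ into a fixed compact subset of $\Omega$, and then mollify in the space variable. Both operations will be shown not to increase the $\ML$-norm and to converge weakly-$\ast$, and the required sequence will then be extracted by a diagonal argument. The latter is legitimate because $\CL=\mathcal C_0(\Omega,L^2(I))$ is separable (continuous functions on a bounded interval with values in the separable space $L^2(I)$), so the weak-$\ast$ topology on the closed ball $B:=\{w\in\ML:\|w\|_{\ML}\le\|u\|_{\ML}\}$ in $\ML=\CL^\ast$ is metrizable; all approximants constructed below lie in $B$.

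\emph{Step 1: contracting the support.} For $\lambda\in(0,1)$ let $s_\lambda(x):=\tfrac L2+\lambda(x-\tfrac L2)$, an affine injection of $\Omega=(0,L)$ onto a compact subinterval of $\Omega$, and set $u^\lambda:=(s_\lambda)_\#u$, the push-forward vector measure. Writing $u=g\,|u|$ with $\|g(x)\|_{L^2(I)}=1$ for $|u|$-a.e.\ $x$, it is characterized by $\langle u^\lambda,\phi\rangle=\int_\Omega(g,\phi\circ s_\lambda)_{L^2(I)}\,\mathrm d|u|$ for $\phi\in\CL$. Since $s_\lambda$ is injective, $|u^\lambda|=(s_\lambda)_\#|u|$, whence $\|u^\lambda\|_{\ML}=|u|(\Omega)=\|u\|_{\ML}$, and $\operatorname{supp}|u^\lambda|\subset s_\lambda(\overline\Omega)\Subset\Omega$. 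Because $s_\lambda\to\mathrm{id}$ uniformly on $\overline\Omega$ as $\lambda\to1$ and each $\phi\in\CL$ is uniformly continuous, $\phi\circ s_\lambda\to\phi$ in $\CL$, so $|\langle u^\lambda-u,\phi\rangle|\le\|u\|_{\ML}\,\|\phi\circ s_\lambda-\phi\|_{\CL}\to0$; hence $u^\lambda\rightharpoonup^\ast u$ in $\ML$ as $\lambda\to1$.

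\emph{Step 2: mollification.} Fix an even $\varphi\in C_c^\infty(-1,1)$ with $\varphi\ge0$ and $\int_\R\varphi=1$, and set $\varphi_\ep(x):=\ep^{-1}\varphi(x/\ep)$. Writing $u^\lambda=g_\lambda\,|u^\lambda|$ with $\|g_\lambda\|_{L^2(I)}=1$ a.e., define
\[
u^\lambda_\ep(x):=\int_\Omega\varphi_\ep(x-x')\,\mathrm du^\lambda(x')=\int_\Omega\varphi_\ep(x-x')\,g_\lambda(x')\,\mathrm d|u^\lambda|(x')\in L^2(I).
\]
Differentiation under the integral sign gives $u^\lambda_\ep\in C^\infty(\R,L^2(I))$, and for $\ep$ small (depending on $\lambda$) $\operatorname{supp}u^\lambda_\ep\subset\operatorname{supp}|u^\lambda|+[-\ep,\ep]\Subset\Omega$, so $u^\lambda_\ep\in C_c^\infty(\Omega,L^2(I))$. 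Identifying $u^\lambda_\ep$ with the vector measure of density $u^\lambda_\ep(x)$ against Lebesgue measure, its norm is $\|u^\lambda_\ep\|_{\ML}=\int_\Omega\|u^\lambda_\ep(x)\|_{L^2(I)}\,\mathrm dx$, so the Bochner triangle inequality and Tonelli's theorem yield
\[
\|u^\lambda_\ep\|_{\ML}\le\int_\Omega\!\int_\Omega\varphi_\ep(x-x')\,\mathrm d|u^\lambda|(x')\,\mathrm dx=\int_\Omega\Big(\int_\Omega\varphi_\ep(x-x')\,\mathrm dx\Big)\mathrm d|u^\lambda|(x')\le|u^\lambda|(\Omega)=\|u\|_{\ML}.
\]
For $\phi\in\CL$, extended by zero to $\R$, a Fubini computation gives $\langle u^\lambda_\ep,\phi\rangle=\langle u^\lambda,\varphi_\ep*\phi\rangle$; since $\operatorname{supp}|u^\lambda|\Subset\Omega$ and $\varphi_\ep*\phi\to\phi$ uniformly on $\R$, we conclude $u^\lambda_\ep\rightharpoonup^\ast u^\lambda$ in $\ML$ as $\ep\to0$.

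\emph{Conclusion and main difficulty.} Combining Steps 1 and 2 with the metrizability of the weak-$\ast$ topology on $B$, I would pick $\lambda_n\uparrow1$ and then $\ep_n\downarrow0$ small enough that $u_n:=u^{\lambda_n}_{\ep_n}$ satisfies $u_n\rightharpoonup^\ast u$; by construction $u_n\in C_c^\infty(\Omega,L^2(I))$ and $\|u_n\|_{\ML}\le\|u\|_{\ML}$, which is \eqref{weakstarseq}. The one genuinely delicate point is that the total-variation support of a general $u\in\ML$ need not be a compact subset of the \emph{open} interval $\Omega$, so a direct mollification may fail to produce compactly supported functions; Step 1 is exactly what repairs this, and there the only nontrivial check is that push-forward under an injective affine map preserves the total variation measure, hence the $\ML$-norm. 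All remaining estimates — the norm bound via Tonelli, the identity $\langle u^\lambda_\ep,\phi\rangle=\langle u^\lambda,\varphi_\ep*\phi\rangle$, and the uniform continuity of elements of $\CL$ — are routine.
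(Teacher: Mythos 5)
Your argument is correct, but it follows a genuinely different route from the paper's. The paper proves the lemma non-constructively: it considers the absolutely convex set $E=\{u\in \mathcal C_c^\infty(\Omega,L^2(I)):\|u\|_{L^1(\Omega,L^2(I))}\leq 1\}$ and shows by contradiction, via a Hahn--Banach separation theorem in $\ML$ equipped with the weak-star topology, that the weak-star closure of $E$ contains the unit ball of $\ML$; the key computation is that $\sup_{u\in E}|\langle u,v\rangle|=\|v\|_{\CL}$, which follows from the density of $\mathcal C_c^\infty(\Omega,L^2(I))$ in $L^1(\Omega,L^2(I))$. You instead construct the approximants explicitly by first contracting the support with an affine push-forward (which correctly handles the genuine issue that $|u|$ may charge neighbourhoods of $x=0,L$ where test functions in $\CL$ vanish) and then mollifying the vector measure; your norm bound via Tonelli, the duality identity $\langle u^\lambda_\ep,\phi\rangle=\langle u^\lambda,\varphi_\ep*\phi\rangle$, and the use of the polar decomposition $\mathrm du=g\,\mathrm d|u|$ (valid since $L^2(I)$ has the Radon--Nikod\'ym property) are all sound, and note that for the lemma only the inequality $\|u^\lambda\|_{\ML}\le\|u\|_{\ML}$ is needed, which is immediate from the pairing without discussing push-forwards of total variation. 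Both proofs invoke the separability of $\CL$ to metrize the weak-star topology on bounded sets and pass from nets (or two-parameter families) to sequences. The paper's argument is shorter and delegates all the analysis to the scalar-density fact in $L^1$; yours is longer but constructive, which would be an advantage if one later needed additional structural properties of the approximating sequence (e.g.\ explicit supports or smoothness in prescribed compact sets).
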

\begin{proof}
We denote by $X$ the locally convex space $\ML$ endowed with its weak-star topology and define the absolutely convex set
\[
 E=\{u\in \mathcal C_c^\infty(\Omega,L^2(I))|\|u\|_{L^1(\Omega,L^2(I))}\leq 1\}\subset X.
\]
\par Assume that \eqref{weakstarseq} is wrong.
Then there exists $u_0\in\ML$ with
\linebreak
$\|u_0\|_{\ML}=1$ such that $u_0\not\in\bar{E}$ where $\bar{E}$ is the closure of $E$ in $X$.
Owing to the corollary of a theorem on the separation of convex sets \cite[Ch. III, Theorem 6]{Kantorovich82} there exists
$v\in \CL$ such that
\begin{multline}\label{weakstarseq 2}
 |\langle u,v\rangle_{\ML,\CL}|\leq 1\ \ \forall u\in E,\\
 1<\langle u_0,v\rangle_{\ML,\CL}\leq \|v\|_{\CL}.
\end{multline}
On the other hand, $\mathcal C_c^\infty(\Omega,L^2(I))$ is dense in $L^1(\Omega,L^2(I))$ thus
\[
 \sup_{u\in E}|\langle u,v\rangle_{\ML,\CL}|=\|v\|_{\CL}
\]
that contradicts \eqref{weakstarseq 2}. Thus $u_0\in\bar{E}$. Since $\CL$ is separable, the weak-star topology on $E$ is metrizable. {Therefore} the closure of $E$ is equal to its sequential closure, see \cite[Theorem 3.28, Corollary 3.30]{Brezis:2011}.
\end{proof}
Note that clearly $C_c^\infty(\Omega,L^2(I))\subset L^2(I,V)$.

\par Preliminarily we prove the following crucial a priori bound.
\begin{lemma}\label{aprioriest}
Let {$(u,y^0,y^1)\in L^1(\Omega,L^2(I))\times V\times H$}
and $y$ be the corresponding strong solution of problem \eqref{state_equation}.
Then $y$ satisfies the following a priori bound
\begin{multline}
 \|y\|_{\mathcal{C}(\bar I,V)}+\|\partial_t y\|_{\mathcal{C}(\bar I,H)}
 +\|\kappa\partial_xy\|_{\mathcal{C}(\bar{\Omega},L^2(I))}+\|\partial_t y\|_{\mathcal{C}_0(\Omega,L^2(I))}\\
 \leq c\big(\|u\|_{L^1(\Omega,L^2(I))}+\|\mathbf{y}\|_{V\times H}\big).
\label{eneq1C}
\end{multline}
\end{lemma}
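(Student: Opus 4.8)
The plan is to establish the estimate \eqref{eneq1C} first for smooth data and smooth source $u\in\mathcal C_c^\infty(\Omega,L^2(I))$ (so that Proposition \ref{prop:exist weak}, item 2, already guarantees a strong solution with the regularity needed to justify the integrations by parts below), and then obtain the general case by the density provided implicitly through the $L^1(\Omega,L^2(I))$-norm on the right. The heart of the matter is a non-standard energy identity in which the roles of $t$ and $x$ are interchanged: instead of differentiating the usual energy $\frac12\int_\Omega(\rho|\partial_ty|^2+\kappa|\partial_xy|^2)\,\mathrm dx$ in $t$, one differentiates in $x$ the quantity
\[
e(x):=\tfrac12\int_I\big(\rho(x)|\partial_ty(t,x)|^2+\kappa(x)|\partial_xy(t,x)|^2\big)\,\mathrm dt,
\]
or rather a suitable variant adapted to the mixed boundary/initial conditions. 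Multiplying the equation $\rho\partial_{tt}y-\partial_x(\kappa\partial_xy)=u$ by $\kappa\partial_xy$ and integrating over $I$, then integrating by parts in $t$ (using $y(0)=y^0$, $\partial_ty(0)=y^1$ and the endpoint terms at $t=T$), produces an expression for $\partial_x e(x)$ up to lower-order terms involving $\rho',\kappa'\in L^\infty(\Omega)$ (here $\rho,\kappa\in H^1(\Omega)\hookrightarrow\mathcal C(\bar\Omega)$ is used) and the forcing term $\int_I u\,\kappa\partial_xy\,\mathrm dt$, which is bounded by $\|u(\cdot,x)\|_{L^2(I)}$ times $\|\kappa\partial_xy(\cdot,x)\|_{L^2(I)}$. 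Integrating in $x$ and invoking a Gronwall argument in the spatial variable yields a pointwise-in-$x$ bound
\[
\|\kappa\partial_xy(\cdot,x)\|_{L^2(I)}^2+\|\partial_ty(\cdot,x)\|_{L^2(I)}^2\le c\Big(\|u\|_{L^1(\Omega,L^2(I))}+\|\mathbf y\|_{V\times H}\Big)^2
\]
uniformly in $x\in\bar\Omega$, which is exactly the third and fourth terms on the left of \eqref{eneq1C}.

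For the first two terms, I would run the classical energy argument in time in parallel: multiplying by $\partial_ty$ and integrating over $\Omega$ gives $\frac{d}{dt}E(t)=\int_\Omega u\,\partial_ty\,\mathrm dx$, and the right-hand side is estimated by $\|u(\cdot)\|_{L^1(\Omega,L^2(I))}$ against $\sup_{x}\|\partial_ty(\cdot,x)\|_{L^2(I)}$ — but the latter is precisely what the spatial estimate just controlled. So the two estimates feed into each other: the spatial energy bound furnishes the $\mathcal C(\bar\Omega,L^2(I))$ control of $\partial_ty$ and $\kappa\partial_xy$, and this in turn closes the temporal energy estimate for $\|y\|_{\mathcal C(\bar I,V)}+\|\partial_ty\|_{\mathcal C(\bar I,H)}$ without losing a factor of the measure norm. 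The boundary condition $y=0$ on $I\times\partial\Omega$ is what makes $\partial_ty$ lie in $\mathcal C_0(\Omega,L^2(I))$, i.e.\ vanish at $x=0,L$, accounting for the precise space in the fourth term.

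The main obstacle is the spatial energy identity itself: unlike the temporal energy, which is monotone-friendly because the principal part is elliptic in $x$, the quantity $e(x)$ is not a priori monotone, the "flux" terms at $t=0$ and $t=T$ must be handled using the initial data and the endpoint quantities $\partial_ty(T)$, $\kappa\partial_xy(T)$ (which are themselves only controlled after the fact), and the variable coefficients contribute genuinely non-trivial lower-order terms through $\rho'$ and $\kappa'$ that must be absorbed by Gronwall in $x$. Care is also needed to set up the computation so that the bound is in terms of $\|u\|_{L^1(\Omega,L^2(I))}$ rather than an $L^2$-in-$x$ norm: this forces one to estimate the forcing contribution by pulling out $\sup_x\|\kappa\partial_xy(\cdot,x)\|_{L^2(I)}$ and integrating $\|u(\cdot,x)\|_{L^2(I)}$ in $x$, then absorbing the supremum into the left side — a maneuver that only works because the remaining Gronwall constant is uniform in $x$. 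Once the estimate is proved for $u\in\mathcal C_c^\infty(\Omega,L^2(I))$, extending it to all $u\in L^1(\Omega,L^2(I))$ follows from the density of $\mathcal C_c^\infty(\Omega,L^2(I))$ in $L^1(\Omega,L^2(I))$ together with the linearity of the solution map and the a priori bound applied to differences.
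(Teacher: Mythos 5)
Your proposal follows essentially the same route as the paper: the same non-standard spatial energy identity obtained by multiplying the equation by $\kappa\partial_xy$ and integrating over $I$, the same coupling with the classical temporal energy identity, and the same maneuver of pulling out $\sup_x\|\kappa\partial_xy(\cdot,x)\|_{L^2(I)}$ so that only the $L^1$-in-$x$ norm of $u$ appears. The only cosmetic differences are that the paper closes the spatial differential inequality not by Gronwall but by a mean-value-theorem argument on a small interval containing the maximizer of $P(x)=\rho\kappa\|\partial_ty\|_{L^2(I)}^2+\|\kappa\partial_xy\|_{L^2(I)}^2$ (a Gronwall argument would in any case need such an anchor point, since $P$ is not controlled at $x=0,L$), and that $\rho,\kappa\in H^1(\Omega)$ only yields $\partial_x(\rho\kappa)\in L^2(\Omega)\subset L^1(\Omega)$ rather than $L^\infty(\Omega)$ as you write, which is still sufficient for the absorption.
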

\begin{proof}
We first remind the energy equality for problem \eqref{state_equation}
\[
 \|\sqrt{\rho}\partial_ty(t)\|_H^2+\|\sqrt{\kappa}\partial_xy(t)\|_H^2
 =\|\sqrt{\rho}y^1\|_H^2+\|\sqrt{\kappa}\partial_xy^0\|_H^2
 +2\mathcal I_t(u(t),\partial_ty(t))_H\ \ \text{on}\ \ I.
\]
After setting
\[
 E(t):=\|\partial_ty(t)\|_H^2+\|\partial_xy(t)\|_H^2,~~ E^0:=\|y^1\|_H^2+\|\partial_xy^0\|_H^2
 \]
and $c_0:=\max\big(\|\rho\|_{L^\infty(\Omega)},\,\|\kappa\|_{L^\infty(\Omega)}\big)$, the energy equality implies
\begin{multline}
 \nu\|E\|_{\mathcal{C}(\bar{I})}\leq c_0E^0 +2\max_{\theta\in\bar{I}}\Big|\int_\Omega\int_0^\theta u\partial_ty~\mathrm dt \mathrm dx\Big|\\
 \leq c_0E^0+2\nu^{-1}\|u\|_{L^1(\Omega,\,L^2(I))}\|\sqrt{\rho\kappa}\partial_ty\|_{\mathcal{C}_0(\Omega,\,L^2(I))}.
\label{eneq3}
\end{multline}
\par We also multiply the equation in \eqref{state_equation} by $-2\kappa\partial_xy$ and integrate over $I$. Integration by parts in $t$ yields the equality
\begin{multline}\label{eneq5}
\rho\kappa\partial_x\Big(\|\partial_ty\|_{L^2(I)}^2\Big)+\partial_x\Big(\|\kappa\partial_xy\|_{L^2(I)}^2\Big)\\
=2\rho\kappa\Big(\partial_ty(T)\partial_xy(T)-y^1\partial_xy^0\Big)-2(u,\kappa\partial_xy)_{L^2(I)}\ \text{on}\ \Omega.
\end{multline}
We define a function $P:=\rho\kappa\|\partial_ty\|_{L^2(I)}^2+\|\kappa\partial_xy\|_{L^2(I)}^2$ on $\Omega$.
Since the left-hand side of \eqref{eneq5} equals $\partial_xP-\big(\partial_x(\rho\kappa)\big)\|\partial_ty\|_{L^2(I)}^2$,
taking the modulus and integrating over any $(a,b)\subset\Omega$ we derive
\begin{multline}
\|\partial_xP\|_{L^1(a,b)}
\leq c_0^2(E(T)+E^0)+2\|u\|_{L^1(\Omega,\,L^2(I))}\|\kappa\partial_xy\|_{\mathcal{C}(\bar{\Omega},\,L^2(I))}\\
+\|\big(\partial_x(\rho\kappa)\big)\|\partial_ty\|_{L^2(I)}^2\|_{L^1(a,b)}\\
\leq c_0^2(\|E\|_{\mathcal{C}(\bar{I})}+E^0)
+2\|u\|_{L^1(\Omega,\,L^2(I))}\|P\|_{\mathcal{C}(\bar{\Omega})}^{1/2}
+\nu^{-2}\|\partial_x(\rho\kappa)\|_{L^1(a,b)}\|P\|_{\mathcal{C}(\bar{\Omega})}.
\label{eneq7}
\end{multline}
Let $x_0\in\bar{\Omega}$ be such that $\|P\|_{\mathcal{C}(\bar{\Omega})}=P(x_0)$ hold and let now $[a,b]\ni x_0$.
Then the mean value theorem for integrals implies
\begin{equation}
 \|P\|_{\mathcal{C}(\bar{\Omega})}\leq(b-a)^{-1}\|P\|_{L^1(a,b)}+\|\partial_xP\|_{L^1(a,b)}.
\label{eneq8}
\end{equation}
By the above definitions we clearly have
\begin{equation}\label{estPE}
 \|P\|_{L^1(\Omega)}\leq c_0^2\|E\|_{L^1(I)}\leq c_0^2T\|E\|_{\mathcal{C}(\bar{I})}.
\end{equation}
Inserting \eqref{eneq7} into \eqref{eneq8} and using \eqref{estPE}, we obtain
\begin{multline}
 \|P\|_{\mathcal{C}(\bar{\Omega})}
 \leq c_0^2\big(1+T(b-a)^{-1}\big)\big(\|E\|_{\mathcal{C}(\bar{I})}+E^0\big)
 +2\|u\|_{L^1(\Omega,\,L^2(I))}\|P\|_{\mathcal{C}(\bar{\Omega})}^{1/2}\\
 +\nu^{-2}{(b-a)^{1/2}\|\rho\kappa\|_{H^1(\Omega)}}\|P\|_{\mathcal{C}(\bar{\Omega})}.
\label{eneq9}
\end{multline}
Owing to \eqref{eneq3} we can write
\begin{equation}
\nu\|E\|_{\mathcal{C}(\bar{I})}\leq c_0^2E^0+2\nu^{-1}\|u\|_{L^1(\Omega,\,L^2(I))}\|P\|_{\mathcal{C}(\bar{\Omega})}^{1/2}.
\label{eneq11}
\end{equation}
Using this in \eqref{eneq9} and choosing a small enough $(a,b)$ such that
\[\nu^{-2}{(b-a)^{1/2}\|\rho\kappa\|_{H^1(\Omega)}}\leq 1/2,\]
we derive
\begin{gather}
 \|P\|_{\mathcal{C}(\bar{\Omega})}\leq c_1\big(E^0+\|u\|_{L^1(\Omega,\,L^2(I))}^2\big).
\label{eneq13}
\end{gather}
Inserting the last bound in \eqref{eneq11}, we also get
\[
\|E\|_{\mathcal{C}(\bar{I})}\leq c_2\big(E^0+\|u\|_{L^1(\Omega,\,L^2(I))}^2\big).
\]
Finally, this yields bound \eqref{eneq1C}.
\end{proof}
\begin{remark}
Lemma \ref{aprioriest} remains valid for $\rho,\kappa\in W^{1,1}(\Omega)$.
Owing to the absolute continuity of the Lebesgue integral we have
$\|\partial_x(\rho\kappa)\|_{L^1(a,b)}\leq\mu(b-a)$, where  $\lim_{\theta\to+0}\mu(\theta)=0$, thus
one can replace $(b-a)^{1/2}\|\rho\kappa\|_{H^1(\Omega)}$ by $\mu(b-a)$ in \eqref{eneq9} and below in the proof.
\end{remark}
\begin{theorem}\label{regML2}
Let $(u,y^0,y^1)\in \ML\times V\times H$. Then there exists a unique weak solution $y$ and it satisfies the bound
\begin{equation}\label{aprioriy}
 \|y\|_{\mathcal C(\bar I,V)}+\|\partial_t y\|_{\mathcal C(\bar I,H)}
 \leq c\,\big(\|u\|_{\mathcal M(\Omega,L^2(I))}+\|\mathbf{y}\|_{V\times H}\big).
\end{equation}
\end{theorem}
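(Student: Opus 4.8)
The plan is to obtain existence by approximation --- regularizing the control via Lemma~\ref{weakstardensity} and exploiting the a priori bound of Lemma~\ref{aprioriest} --- and to get~\eqref{aprioriy} by weak-$\ast$ lower semicontinuity in the limit. Uniqueness is the usual energy argument: the difference $w$ of two weak solutions lies in $\mathcal{C}(\bar I,V)\cap\mathcal{C}^1(\bar I,H)$ and satisfies~\eqref{int_id1} with zero right-hand side, hence is the weak solution of~\eqref{state_equation} with data $(0,0,0)\in\L\times V\times H$, so Proposition~\ref{prop:exist weak}(1) forces $w\equiv 0$. For existence, pick $u_n\in C_c^\infty(\Omega,L^2(I))\subset L^2(I,V)$ as in Lemma~\ref{weakstardensity}, so $u_n\rightharpoonup^\ast u$ in $\ML$ and $\|u_n\|_{L^1(\Omega,L^2(I))}=\|u_n\|_{\ML}\le\|u\|_{\ML}$; since Lemma~\ref{aprioriest} is phrased for strong solutions, also pick $(y^0_n,y^1_n)\in V^2\times V$ with $(y^0_n,y^1_n)\to(y^0,y^1)$ in $V\times H$. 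Let $y_n$ be the strong solution of~\eqref{state_equation} with data $(u_n,y^0_n,y^1_n)$, which exists by Proposition~\ref{prop:exist weak}(2); then Lemma~\ref{aprioriest} bounds $\|y_n\|_{\mathcal{C}(\bar I,V)}+\|\partial_t y_n\|_{\mathcal{C}(\bar I,H)}+\|\kappa\partial_x y_n\|_{\mathcal{C}(\bar\Omega,L^2(I))}+\|\partial_t y_n\|_{\mathcal{C}_0(\Omega,L^2(I))}$ by $c(\|u\|_{\ML}+\|\mathbf{y}\|_{V\times H})$ uniformly in $n$.

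Next I would pass to a weak-$\ast$ limit. Since $V$ is a separable Hilbert space, $\{y_n\}$ and $\{\partial_t y_n\}$ are bounded in $L^\infty(I,V)\cong(L^1(I,V^\ast))^\ast$ and $L^\infty(I,H)\cong(L^1(I,H))^\ast$, and $\{\kappa\partial_x y_n\},\{\partial_t y_n\}$ in $L^\infty(\Omega,L^2(I))\cong(L^1(\Omega,L^2(I)))^\ast$; along a subsequence $y_n\rightharpoonup^\ast y$ in $L^\infty(I,V)$, $\partial_t y_n\rightharpoonup^\ast\partial_t y$ in $L^\infty(I,H)$, and also $\kappa\partial_x y_n\rightharpoonup^\ast\kappa\partial_x y$, $\partial_t y_n\rightharpoonup^\ast\partial_t y$ in $L^\infty(\Omega,L^2(I))$ (the limits identified through the distributional ones). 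From $\rho\partial_{tt}y_n=\partial_x(\kappa\partial_x y_n)+u_n$ and the embedding $\ML\hookrightarrow L^2(I,V^\ast)$ of~\eqref{embedding1}, $\{\partial_{tt}y_n\}$ is bounded in $L^2(I,V^\ast)$, with $\partial_{tt}y_n\rightharpoonup\partial_{tt}y$ there. Each $y_n$ satisfies the identity~\eqref{weak_form_lions} together with $y_n(0)=y^0_n$, $\partial_t y_n(0)=y^1_n$. For $v\in L^2(I,V)$ the transposed map $\tilde v(x):=v(\cdot,x)$ belongs to $\CL$ because in one dimension $H^1_0(\Omega)\hookrightarrow\mathcal{C}_0(\bar\Omega)$ and $\|v(\cdot,x)-v(\cdot,x')\|_{L^2(I)}^2\le|x-x'|\,\|v\|_{L^2(I,V)}^2$, whence $\int_I\langle u_n,v\rangle_\Omega\,\mathrm dt=\langle u_n,\tilde v\rangle_{\ML,\CL}\to\langle u,\tilde v\rangle_{\ML,\CL}=\int_I\langle u,v\rangle_\Omega\,\mathrm dt$. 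Passing to the limit in~\eqref{weak_form_lions} and in the initial conditions shows that $y\in L^\infty(I,V)$, with $\partial_t y\in L^\infty(I,H)\cap L^\infty(\Omega,L^2(I))$ and $\partial_{tt}y\in L^2(I,V^\ast)$, solves~\eqref{weak_form_lions} with $y(0)=y^0$, $\partial_t y(0)=y^1$, and~\eqref{aprioriy} follows from weak-$\ast$ lower semicontinuity of the norms. It then remains to show $y\in\mathcal{C}(\bar I,V)\cap\mathcal{C}^1(\bar I,H)$; since Definitions~\ref{def:weak1} and~\ref{def:weak2} differ only by this property, $y$ is then the sought weak solution.

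For the regularity upgrade, $y\in L^\infty(I,V)$ with $\partial_{tt}y\in L^2(I,V^\ast)$ already gives that $y$ is weakly continuous from $\bar I$ into $V$ and $\partial_t y$ weakly continuous into $H$. To promote this to strong continuity I would use the energy equality recalled at the start of the proof of Lemma~\ref{aprioriest}: for $y_n$ it reads $\|\sqrt\rho\,\partial_t y_n(t)\|_H^2+\|\sqrt\kappa\,\partial_x y_n(t)\|_H^2=\|\sqrt\rho\,y^1_n\|_H^2+\|\sqrt\kappa\,\partial_x y^0_n\|_H^2+2\,\mathcal{I}_t(u_n,\partial_t y_n)_H$ on $\bar I$. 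Passing to the limit one should obtain that $t\mapsto\|\sqrt\rho\,\partial_t y(t)\|_H^2+\|\sqrt\kappa\,\partial_x y(t)\|_H^2$ agrees with a continuous function of $t$ --- the continuity of the source $t\mapsto 2\,\mathcal{I}_t(u,\partial_t y)_H$ being precisely where one uses $\partial_t y\in\mathcal{C}_0(\Omega,L^2(I))$, inherited from the last term of~\eqref{eneq1C}. Combined with weak continuity, the Hilbert-space fact that weak convergence together with convergence of norms forces strong convergence then yields $y\in\mathcal{C}(\bar I,V)$ and $\partial_t y\in\mathcal{C}(\bar I,H)$, finishing the proof.

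The step I expect to be the main obstacle is exactly this last one. Since $u_n$ converges to $u$ only weak-$\ast$ in $\ML$, the $y_n$ converge only weak-$\ast$, so the quadratic energy identity cannot simply be passed to the limit term by term: one must show $2\,\mathcal{I}_t(u_n,\partial_t y_n)_H\to 2\,\mathcal{I}_t(u,\partial_t y)_H$ uniformly in $t$, which requires controlling the mode of convergence of $\{\partial_t y_n\}$ in $\mathcal{C}_0(\Omega,L^2(I))$ --- not merely its weak-$\ast$ convergence in $L^\infty(\Omega,L^2(I))$ --- and also that the limit $\partial_t y$ genuinely belongs to $\mathcal{C}_0(\Omega,L^2(I))$, so that its pairing with the vector measure $u$ is continuous in $t$. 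This is where the non-standard spatial energy bound of Lemma~\ref{aprioriest} must replace the Fourier/Laplace-transform and explicit-solution arguments used for constant coefficients in~\cite{KunischTrautmannVexler14}.
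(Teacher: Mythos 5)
Your uniqueness argument and your use of Lemma~\ref{weakstardensity} together with the a priori bound of Lemma~\ref{aprioriest} to produce a bounded approximating family are in line with the paper. But the existence/regularity part has a genuine gap, and you have in fact pinpointed it yourself without closing it: a single weak-$\ast$ approximation $u_n\rightharpoonup^\ast u$ in $\ML$ only delivers a limit $y\in L^\infty(I,V)$ with $\partial_t y\in L^\infty(I,H)$ (hence at best weak continuity in time), and your proposed repair --- passing to the limit in the energy identity to upgrade weak to strong continuity --- hinges on showing $\mathcal I_t(u_n,\partial_t y_n)_H\to\mathcal I_t(u,\partial_t y)_H$. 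This is a bilinear expression in which \emph{both} factors converge only in a weak-$\ast$ sense, so the convergence does not follow from what you have established; you would need some compactness or strong convergence of $\partial_t y_n$ in $\mathcal C_0(\Omega,L^2(I))$, which is exactly what is missing. Moreover, even formulating the energy identity for the limit requires already knowing that $\partial_t y$ pairs with the measure $u$, i.e.\ essentially the regularity you are trying to prove. As written, the proof does not go through.

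The paper avoids this circularity by a two-stage approximation. First it restricts to $u\in\mathcal M(\Omega,H^1(I))\hookrightarrow H^1(I,V^\ast)$: for such $u$ Proposition~\ref{prop:exist weak} \emph{already} gives $y\in\mathcal C(\bar I,V)\cap\mathcal C^1(\bar I,H)$, so the weak-$\ast$ approximation by $C_c^\infty(\Omega,L^2(I))$ functions is needed only to establish the \emph{quantitative} bound \eqref{aprioriy1} (the weak-$\ast$ limit is identified with the already-continuous weaker solution by uniqueness, and the $L^\infty$ bounds then coincide with the $\mathcal C$-norm bounds). Second, for general $u\in\ML$ it uses the \emph{strong} density of $\mathcal M(\Omega,H^1(I))$ in $\ML$: applying \eqref{aprioriy1} to differences $u_n-u_m$ shows $\{y_n\}$ is Cauchy in $\mathcal C(\bar I,V)\cap\mathcal C^1(\bar I,H)$, so the limit is continuous for free and no passage to the limit in a quadratic energy identity is ever required. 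If you want to salvage your approach, you should adopt this intermediate dense class $\mathcal M(\Omega,H^1(I))$ and the Cauchy-sequence argument rather than attempting the weak-plus-norm-convergence route.
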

\begin{proof}
1. Let first $u=0$. According to Proposition \ref{prop:exist weak} were exists a unique weak solution $y$ of \eqref{state_equation} for any $\mathbf{y}\in V\times H$ and it satisfies
\[
\|y\|_{\mathcal C(\bar I,V)}+\|\partial_t y\|_{\mathcal C(\bar I,H)}\leq c\|\mathbf{y}\|_{V\times H}.
\]
\par 2. Now it suffices to consider the case $y^0=y^1=0$. Let first $u\in\mathcal M(\Omega,H^1(I))\hookrightarrow H^1(I,V^\ast)$ since $\partial_t u\in\ML\hookrightarrow L^2(I,V^\ast)$.
Then according to Proposition \ref{prop:exist weak}
there exists a unique weak solution $y\in \mathcal C(\bar I,V)\cap \mathcal C^1(\bar I,H)$
of \eqref{state_equation} and it satisfies bound \eqref{pset13}. Moreover, it is also a weaker solution.
\par So it remains to prove the bound
\begin{equation}\label{aprioriy1}
 \|y\|_{\mathcal C(\bar I,V)}+\|\partial_t y\|_{\mathcal C(\bar I,H)}
 \leq c\,\|u\|_{\mathcal M(\Omega,\,L^2(I))}~~\text{for}~~ u\in\mathcal M(\Omega,H^1(I)).
\end{equation}
To this end, according to Lemma \ref{weakstardensity} we approximate $u$ by functions
$\{u_n\}\subset L^2(I,V)$
satisfying \eqref{weakstarseq}.
The strong solution $y_n$ of \eqref{state_equation} corresponding to $u=u_n$ satisfies the bound like \eqref{eneq1C} and in particular
\[
 \|y_n\|_{\mathcal C(\bar I,V)}+\|\partial_t y_n\|_{\mathcal C(\bar I,H)}\leq c\,\|u_n\|_{\mathcal M(\Omega,\,L^2(I))}
 \leq c\|u\|_{\ML}.
\]
Therefore there exists a subsequence of $\{y_n\}$ (not relabeled) and $\tilde y\in L^\infty(I,V)\cap W^{1,\infty}(I,H)$
such that $y_n$ converges to $\tilde{y}$ in the weak-star sense of $L^\infty(I,V)\cap W^{1,\infty}(I,H)$.
This is sufficient to pass to the limit in the last bound and in \eqref{int_id2} for $y=y_n$, $u=u_n$ and $v(T)=0$, see Remark \ref{rem:vT_zero}.
Thus $\tilde y$ both satisfies the bound
\[
 \|\tilde{y}\|_{L^\infty(\bar I,V)}+\|\partial_t\tilde{y}\|_{W^{1,\infty}(\bar I,H)}\leq c\,\|u\|_{\mathcal M(\Omega,\,L^2(I))}
\]
and is a weaker solution of \eqref{state_equation}. Due to its uniqueness there holds $\tilde y=y$, and bound \eqref{aprioriy1} is proved.
\par 3. Let now $u\in\mathcal M(\Omega,L^2(I))$ and $y$ be the corresponding weaker solution of \eqref{state_equation}, see Proposition \ref{prop:exist weaker}.
The space $\mathcal M(\Omega,H^1(I))$ is dense in $\mathcal M(\Omega,L^2(I))$, cf. \cite[Proposition 2.1]{KunischTrautmannVexler14}; 
\Blue{this also holds since $\ML$ is the projective closure of the tensor product between $\m$ and $L^2(I)$, see \cite{Raymond},} and  $H^1(I)$ is dense in $L^2(I)$. Thus
there exists
a sequence $\{u_n\}\subset \mathcal M(\Omega,H^1(I))$ such that $u_n\to u$ in $\mathcal M(\Omega,L^2(I))$ as $n\to\infty$.
Let $y_n\in \mathcal C(\bar I,V)\cap \mathcal C^1(\bar I,H)$ be the above weak solution of \eqref{state_equation} corresponding to $u=u_n$. %
Since $\{u_n\}$ is a Cauchy sequence in $\mathcal M(\Omega,L^2(I))$, $\{y_n\}$ is a Cauchy sequence in
$\mathcal C(\bar I,V)\cap \mathcal C^1(\bar I,H)$ too due to bound \eqref{aprioriy1} for $u=u_n$.
Thus $y_n\to\hat{y}$ in $\mathcal C(\bar I,V)\cap \mathcal C^1(\bar I,H)$ and
\[
 \|\hat{y}\|_{\mathcal C(\bar I,V)}+\|\partial_t\hat{y}\|_{\mathcal C(\bar I,H)}\leq c\,\|u\|_{\mathcal M(\Omega,\,L^2(I))}.
\]
Then we pass to the limit in \eqref{int_id1} for $y=y_n$, $u=u_n$ and $v(T)=0$ and see that $\hat{y}$ is a weak solution of \eqref{state_equation}.
{Due to uniqueness of the weaker solution we get $\hat{y}=y$, and the proof is complete.}
\end{proof}
\subsubsection{Some function spaces and embeddings}
We set
\[
 H^{(-1)}=V^*,\ \ H^{(0)}=H,\ \ H^{(1)}=V,\ \ H^{(2)}=V^2,\ \ H^{(3)}=V^3
\]
and introduce the interpolation spaces
\[
H^{(\lambda)}
:=\big(H^{(\ell)},H^{(\ell+1)}\big)_{\lambda-\ell,\infty},\ \
\ell:=\lfloor\lambda\rfloor,
\]
for non-integer $\lambda\in (-1,3)$ using the real $K_{\lambda,q}$-interpolation method of Banach spaces for $q=\infty$, see \cite{BergLofstrom76}.
Recall that the value  $q=\infty$ leads to the broadest intermediate spaces.
Their explicit description in terms of the Nikolskii spaces or their subspaces is known, see \cite{Nikolskii75,Triebel78,Zlotnik79}.
In particular,
\begin{gather*}
 H^{(\lambda)}=H^{\lambda,2}(\Omega)\ \ \text{for}\ \ 0<\lambda<\half,
\\
 H^{(\lambda)}=\tilde{H}^{1/2,2}(\Omega):=\{w\in L^2(\Omega)|\,
 \textsl{o}w\in H^{1/2,2}(\tilde{\Omega})\}\ \
 \text{for}\ \ \lambda=\half,
\\
 H^{(\lambda)}=H_0^{\lambda,2}(\Omega):=\{w\in H^{\lambda/2,2}(\Omega)|\,w|_{x=0,L}=0\}\ \ \text{for}\ \ \half<\lambda<1,
\end{gather*}
where $\|w\|_{\tilde{H}^{1/2,2}(\Omega)}=\|\textsl{o}w\|_{H^{1/2,2}(\tilde{\Omega})}$ and
$\textsl{o}w$ is the odd extension of $w$ with respect to $x=0$ and $L$ from $\Omega$ to $\tilde{\Omega}:=(-L,2L)$.
Hereafter equalities of Banach spaces are understood up to the equivalence of their norms.
It is well known that the space $\tilde{H}^{1/2,2}(\Omega)$ contains discontinuous but piecewise $\mathcal{C}^1$-functions.
\par In addition, let $D_x$ be the distributional derivative and, for a Banach space $B(\Omega)\subset L^1(\Omega)$, $B_\perp(\Omega)$ denote the subspace of $W\in B(\Omega)$ with the mean value
\[
\langle W\rangle_\Omega:=\frac{1}{L}\int_\Omega W\,\mathrm{dx}=0.
\]
Define the space $H^{-1/2,2}(\Omega)$ of distributions $w=D_xW$ with $W\in H_\perp^{1/2,2}(\Omega)$ equipped with the norm
$\|w\|_{H^{-1/2,2}(\Omega)}=\|W\|_{H^{1/2,2}(\Omega)}$.
Then $H^{(-1/2)}=H^{-1/2,2}(\Omega)$, see Item 3 of the proof of Lemma \ref{lem:embedding} below.
(Actually a quite similar result is valid for $H^{(\lambda)}$ for any $-1<\lambda<0$.)
Note that, in particular, the Dirac delta-function $\delta_a(x)=D_x\big(H(x-a)-(1-a/L)\big)\in H^{(-1/2)}$ for any $a\in\Omega$, where $H(\xi)=0$ for $\xi<0$ and $H(\xi)=1$ for $\xi>0$ is the Heaviside function.
\par Let $Q=\Omega\times I$ and $\Delta_hW(x)=W(x+h)-W(x)$ be the forward difference in $x$.
Define the spaces $H^{1/2,0;2}(Q)$ and $SHW^{1/2,1;2}(Q)$ of functions $W\in L^2(Q)$ such that respectively $|W|_{H^{1/2,0;2}(Q)}:=\sup_{0<h<L}h^{-1/2}\|\Delta_hW\|_{L^2((0,L-h)\times I)}<\infty$
and $\partial_tW\in H^{1/2,0;2}(Q)$
equipped with the norms
\begin{gather*}
\|W\|_{H^{1/2,0;2}(Q)}=\|W\|_{L^2(Q)}+|W|_{H^{1/2,0;2}(Q)},
\\
\|W\|_{SHW^{1/2,1;2}(Q)}=\|W\|_{L^2(Q)}+\|\partial_tW\|_{H^{1/2,0;2}(Q)}.
\end{gather*}
Here $H^{1/2,0;2}(Q)$ is a particular anisotropic Nikolskii space (of the order $1/2$ in $x$ only) and $SHW^{1/2,1;2}(Q)$ is a particular space of functions having the dominating mixed smoothness (of the order $1/2$ in $x$ in the Nikolskii sense and $1$ in $t$ in the Sobolev sense).
Note that $SHW^{1/2,1;2}(Q)\hookrightarrow H^{1/2,0;2}(Q)$.

\par For a Banach space $B(Q)\subset L^1(Q)$, let $B_\perp(Q)$ be the subspace of $W\in B(Q)$ such that $\langle W(\cdot,t)\rangle_\Omega=0$ on $I$.
Define the spaces $H^{-1/2,0;2}(Q)$ and $SHW^{-1/2,1;2}(Q)$ of distributions $w=D_xW$ with respectively
$W\in H_\perp^{1/2,0;2}(Q)$ and $W\in SHW_\perp^{1/2,1;2}(Q)$ equipped with the norms
\[
 \|w\|_{H^{-1/2,0;2}(Q)}=\|W\|_{H^{1/2,0;2}(Q)},\ \ \|w\|_{SHW^{-1/2,1;2}(Q)}=\|W\|_{SHW^{1/2,1;2}(Q)}.
\]
Note that all the spaces defined above and below in this subsection are Banach ones.
\par The next technical lemma plays an essential role below.
\begin{lemma}\label{lem:embedding}
\par The following equalities and embeddings hold
\begin{gather}
 \big(L^2(I,V^*),L^2(I,H)\big)_{1/2,\infty}=H^{-1/2,0;2}(Q),
\label{interpsp1}\\
 \big(H^1(I,V^*),H^1(I,H)\big)_{1/2,\infty}=SHW^{-1/2,1;2}(Q),
\label{interpsp2}\\
 \LwM\hookrightarrow H^{-1/2,0;2}(Q),
\label{embedding2}\\
 \mathcal{C}^1(\bar{I},\m)\hookrightarrow SHW^{-1/2,1;2}(Q).
\label{embedding2 1}
\end{gather}
\end{lemma}
\begin{proof}
1. Define the anisotropic Sobolev spaces
$W^{1,0;2}(Q)=\{W\in L^2(Q)|\,\partial_xW\in L^2(Q)\}$ and
$W^{0,1;2}(Q)=\{W\in L^2(Q)|\,\partial_tW\in L^2(Q)\}$ equipped with the norms
\[
 \|W\|_{W^{1,0;2}(Q)}=\|W\|_{L^2(Q)}+\|\partial_xW\|_{L^2(Q)},\
 \|W\|_{W^{0,1;2}(Q)}=\|W\|_{L^2(Q)}+\|\partial_tW\|_{L^2(Q)}.
\]
The following equalities hold
\begin{gather}
 \big(L^2(Q),W^{1,0;2}(Q)\big)_{1/2,\infty}=H^{1/2,0;2}(Q),
\label{interpsp3a}\\
 \big(L_\perp^2(Q),W_\perp^{1,0;2}(Q)\big)_{1/2,\infty}=H_\perp^{1/2,0;2}(Q),
\label{interpsp3b}
\end{gather}
for example, see \cite[Ch. 1.2]{Zlotnik79}.
Recall that the corresponding $\hookleftarrow$-embeddings are proved by the classical techniques of approximation by the Steklov averages and the opposite $\hookrightarrow$-embeddings are rather simple.
Moreover, equality \eqref{interpsp3a} involving three spaces implies \eqref{interpsp3b} since it concerns the closed subspaces of one and the same type for all of these three spaces.
The same is valid for pairs of embeddings \eqref{interpsp4a}-\eqref{interpsp4b} and \eqref{interpsp5} below.
\par The elements in $L^2(I,V^*)$ and $L^2(I,H)=L^2(Q)$ can be uniquely identified as distributions $w=D_xW$ such that respectively
$W\in L_\perp^2(Q)$ with $\|w\|_{L^2(I,V^*)}=\|W\|_{L^2(Q)}$ and
$W\in W_\perp^{1,0;2}(Q)$ with $\|w\|_{L^2(I,H)}=\|\partial_xW\|_{L^2(Q)}$, where $\|\partial_xW\|_{L^2(Q)}$ is an equivalent norm in $W_\perp^{1,0;2}(Q)$ (in the latter case, of course, $D_xW=\partial_xW$).
In particular, for $w\in L^2(Q)$, clearly
$W(x,t)=\int_0^xw(\xi,t)\,d\xi-\big\langle\int_0^xw(\xi,t)\,d\xi\big\rangle_\Omega$.
Taking into account that one and the same operator establishes the one-to-one correspondence between respectively three spaces involved in equalities \eqref{interpsp3b} and \eqref{interpsp1}, the latter one is valid too.

\par 2. Define the space $SW^{1,1;2}(Q)=\{W\in W^{1,2}(Q)|\,\partial_x\partial_tW\in L^2(Q)\}$ equipped with the norm
$\|W\|_{SW^{1,1;2}(Q)}=\|W\|_{W^{1,2}(Q)}+\|\partial_x\partial_tW\|_{L^2(Q)}$.
The following equalities
\begin{gather}
 \big(W^{0,1;2}(Q),SW^{1,1;2}(Q)\big)_{1/2,\infty}=SHW^{1/2,1;2}(Q),
\label{interpsp4a}\\
 \big(W_\perp^{0,1;2}(Q),SW_\perp^{1,1;2}(Q)\big)_{1/2,\infty}=SHW_\perp^{1/2,1;2}(Q),
\label{interpsp4b}
\end{gather}
can be proved similarly to \eqref{interpsp3a}-\eqref{interpsp3b}.
\par The elements in $H^1(I,V^*)$ and $H^1(I,H)=W^{0,1;2}(Q)$ can be uniquely identified as the distributions $w=D_xW$ such that respectively
$W\in W_\perp^{0,1;2}(Q)$, with the equivalent norms $\|w\|_{H^1(I,V^*)}$ and $\|W\|_{W^{0,1;2}(Q)}$, and
$W\in SW_\perp^{1,1;2}(Q)$, with the equivalent norms $\|w\|_{H^1(I,H)}$ and $\|W\|_{SW^{1,1;2}(Q)}$.
Thus equality \eqref{interpsp4b} implies \eqref{interpsp2}.

\par 3. The following equalities hold
\begin{gather}
\hspace{-8pt}
 \big(L^2(\Omega),{H^1(\Omega)}\big)_{1/2,\infty}=H^{1/2,2}(\Omega),\,
 \big(L_\perp^2(\Omega),{H^1_\perp(\Omega)}\big)_{1/2,\infty}=H_\perp^{1/2,2}(\Omega)
\label{interpsp5}
\end{gather}
which are simpler 1D versions of \eqref{interpsp3a}-\eqref{interpsp3b}, for example, see \cite{BergLofstrom76,Triebel78} and \cite[Ch. 1.2]{Zlotnik79}.
The second equality implies the above mentioned one $H^{(-1/2)}=H^{-1/2,2}(\Omega)$.
\par Let $NBV(\bar{\Omega})$ be the space of normalized functions of bounded variation on $\bar{\Omega}$ that are continuous from the right at $x=0$ and continuous from the left at any $x\in (0,L]$; we equip it with the norm
$\|W\|_{NBV(\bar{\Omega})}=\sup_{\bar{\Omega}}W+\var_{\bar\Omega}W$.
Any $w\in\mathcal{M}(\Omega)$ can be represented as $w=D_xW$ with $W\in NBV(\bar{\Omega})$ and $\|w\|_{\mathcal{M}(\Omega)}=\var_{\bar\Omega}W$, for example, see \cite[Ch. 2]{Butazzo98}.
The representation is clearly unique for $W\in NBV_\perp(\Omega)$; in this subspace $\var_{\bar\Omega}W$ serves as an equivalent norm.

\par Notice that the following inequalities hold
\begin{gather}
 \sup_{\bar{\Omega}}|W(x)-\langle W\rangle_\Omega|\leq\var_{\bar\Omega}W,\ \
 \sup_{0<h<L}h^{-1}\|\Delta_hW\|_{L^1(0,L-h)}\leq\var_{\bar\Omega}W
\label{ineqNBV}
\end{gather}
for any $W\in NBV(\bar{\Omega})$;
the definition of the Riemann integral implies the latter one.
Then for any $W\in NBV_\perp(\bar{\Omega})$ we get the inequalities
\begin{gather}
 \sup_{0<h<L}h^{-1/2}\|\Delta_hW\|_{L^2(0,L-h)}
\nonumber\\
 \leq\big(\sup_{0<h<L}h^{-1}\|\Delta_hW\|_{L^1(0,L-h)}\big)^{1/2}(2\|W\|_{L^\infty(\Omega)}\big)^{1/2}
 \leq\sqrt{2}\var_{\bar\Omega}W
\label{ineqvarW}
\end{gather}
(they remain valid for $W\in NBV(\bar{\Omega})$ with $\var_{\bar\Omega}W$ replaced by $\|W\|_{NBV(\bar{\Omega})}$).
Thus
\[
 NBV(\bar{\Omega})\hookrightarrow H^{1/2,2}(\Omega),\ \
 NBV_\perp(\bar{\Omega})\hookrightarrow H_\perp^{1/2,2}(\Omega),\ \ \mathcal{M}(\Omega)\hookrightarrow H^{-1/2,2}(\Omega).
\]
\par Let $w\in\LwM$. Then $w\in L_w^2(I,V^*)=L^2(I,V^*)$,
where the equality is valid due to the classical Pettis theorem \cite[Theorem 8.15.2]{Edwards65} (since $V^*$ is separable), and
$w=D_xW$ with $W\in L_\perp^2(Q)$ and $\|W\|_{L^2(Q)}\leq c\|w\|_{\LwM}$.
\par {Moreover, we have $W(t)\in NBV(\bar\Omega)$ for a.e. $t\in I$. By} applying \eqref{ineqvarW} to $W(\cdot,t)$, omitting $\sup_{0<h<L}$ on the left, integrating the squared result over $I$ and taking back $\sup_{0<h<L}$ on the left, we obtain
\[
 |W|_{H^{1/2,0;2}(Q)}^2
 \leq 2\int_I\big(\var_{\bar\Omega}W(\cdot,t)\big)^2\,dt{=2\int_I\|w(t)\|_{\m}^2\,dt}=2\|w\|_{\LwM}^2
\]
that completes the proof of embedding \eqref{embedding2}.

\par 4. Let $w\in \mathcal{C}^1(\bar{I},\m)$. Then $w=D_xW$ and $\partial_tw=D_xZ$ with $W,Z\in H_\perp^{1/2,0;2}(Q)$ and
\begin{equation}
 \|W\|_{H^{1/2,0;2}(Q)}+\|Z\|_{H^{1/2,0;2}(Q)}\leq c\|w\|_{\mathcal{C}^1(\bar{I},\m)}
\label{ineqWZ}
\end{equation}
according to embedding \eqref{embedding2}.
\par Moreover, define the forward difference quotients in time $\Delta_\tau^{(1)}w(t)=(w(t+\tau)-w(t))/\tau$ for $0\leq t<t+\tau\leq T$.
Then for the same $t$ and $\tau$ owing to the first inequality \eqref{ineqNBV} we get
\[
 \sup_{\bar{\Omega}}|\Delta_\tau^{(1)}W(x,t)-Z(x,t)|
 \leq\var_{\bar{\Omega}}|\Delta_\tau^{(1)}W(x,t)-Z(x,t)|
 =\|\Delta_\tau^{(1)}w(t)-\partial_tw(t)\|_{\mathcal{M}(\Omega)}.
\]
Therefore
\[
 \|\Delta_\tau^{(1)}W-Z\|_{L^2(\Omega\times(0,T-\tau))}\leq\sqrt{L}\|\Delta_\tau^{(1)}w-\partial_tw\| _{\LwM}\to 0\ \text{as}\ \tau\to +0.
\]
Consequently there exists the derivative $\partial_tW=Z\in L^2(Q)$, and inequality \eqref{ineqWZ} implies
embedding \eqref{embedding2 1}.
\end{proof}

\par We also set $V^0(Q)=L^2(Q)$ and define the anisotropic Sobolev subspaces
\[
 V^\ell(Q)=\{w\in L^2(Q)|\,\partial_x^\ell w\in L^2(Q),\ w|_{\partial\Omega\times I}=0\},\
 \|w\|_{V^\ell(Q)}=\|\partial_x^\ell w\|_{L^2(Q)}
\]
for $\ell=1,2$
and the anisotropic Nikolskii subspaces
\[
 \tilde{H}^{\ell+1/2,0;2}(Q)=\{w\in L^2(Q)|\,\partial_x^\ell\textsl{o}w\in H^{1/2,0;2}(\tilde{Q})
 \,\ \text{and (if}\ \ell=1)\,\ w|_{\partial\Omega\times I}=0\}
\]
equipped with the norm
$\|w\|_{\tilde{H}^{\ell+1/2,0;2}(Q)}=\|\partial_x^\ell\textsl{o}w\|_{H^{1/2,0;2}(\tilde{Q})}$
for $\ell=0,1$, where $\tilde{Q}=\tilde{\Omega}\times I$.
Then the following equality holds
\begin{gather}
 \big(V^\ell(Q),V^{\ell+1}(Q)\big)_{1/2,\infty}=\tilde{H}^{\ell+1/2,0;2}(Q),\ \ \ell=0,1,
\label{interpspVH}
\end{gather}
which is similar to equality \eqref{interpsp3a}.

\section{Analysis of the control problem}\label{sec:existence_optimality}

According to Theorem \ref{regML2} {and Proposition \ref{prop:exist weaker}}
the state equation \eqref{state_equation} is uniquely solvable for any $u$ in either $\ML$ or ${\LwM}$ and the solution $y$ depends continuously on the data. Therefore, we can introduce the linear and bounded operator $\hat S\colon (u,y^0,y^1)\mapsto (y,y(T),{\rho}\partial_ty(T))$. The control-to-state mapping
\[
S\colon \M\rightarrow\mathcal Y,
~~u\mapsto (y,y(T),{\rho}\partial_ty(T))
\]
is given by $Su=\hat S(u,0,0)+\hat S(0,y^0,y^1)$ for fixed $y^0$ and $y^1$ and it is an affine and bounded operator. So we can rewrite the original control problem \eqref{measure_control_problem} in its reduced form
\begin{equation*}
j(u)=\half\left\|Su-\mathbf{z}\right\|_{\mathcal Y}^{2}+\alpha\|u\|_{\M}\to\min_{u\in \M}.
\end{equation*}
\begin{proposition}
Problem \eqref{measure_control_problem} has a unique solution $\bar u\in\M$.
\end{proposition}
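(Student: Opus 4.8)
The plan is to establish existence via the direct method of the calculus of variations and uniqueness via strict convexity, exploiting the structure already set up in the reduced problem $j(u)=\tfrac12\|Su-\mathbf z\|_{\mathcal Y}^2+\alpha\|u\|_{\M}$. First I would note that $j$ is bounded below (by $0$), so a minimizing sequence $\{u_n\}\subset\M$ exists with $j(u_n)\to\inf_{\M}j=:m\geq 0$. Since $\alpha>0$ and the first term is nonnegative, the bound $\alpha\|u_n\|_{\M}\leq j(u_n)\leq m+1$ for large $n$ shows $\{u_n\}$ is bounded in $\M$. Because $\M=\C^*$ is the dual of the separable Banach space $\C$ (either $\mathcal C_0(\Omega,L^2(I))$ or $L^2(I,\mathcal C_0(\Omega))$), the Banach--Alaoglu theorem together with metrizability of bounded sets in the weak-star topology yields a subsequence (not relabelled) with $u_n\rightharpoonup^\ast\bar u$ in $\M$ for some $\bar u\in\M$.

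Next I would pass to the limit in each term of $j$. For the regularization term, weak-star lower semicontinuity of the norm gives $\|\bar u\|_{\M}\leq\liminf_n\|u_n\|_{\M}$. For the tracking term, the key point is that the affine control-to-state operator $S\colon\M\to\mathcal Y$ is weak-star-to-weak continuous: indeed $S=\hat S(\cdot,0,0)+\hat S(0,y^0,y^1)$, and by Theorem \ref{regML2} (for $\M=\ML$) respectively Proposition \ref{prop:exist weaker} together with the embeddings \eqref{embedding1}, the linear part $\hat S(\cdot,0,0)\colon\M\to\mathcal Y$ is bounded, hence $u_n\rightharpoonup^\ast\bar u$ forces $Su_n\rightharpoonup S\bar u$ weakly in the Hilbert space $\mathcal Y$ (testing against elements of $\mathcal Y$ which, composed with the bounded linear map, give continuous linear functionals on $\C$). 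Then $\tfrac12\|\,\cdot\,-\mathbf z\|_{\mathcal Y}^2$ is convex and strongly continuous, hence weakly lower semicontinuous, so $\tfrac12\|S\bar u-\mathbf z\|_{\mathcal Y}^2\leq\liminf_n\tfrac12\|Su_n-\mathbf z\|_{\mathcal Y}^2$. Adding the two inequalities gives $j(\bar u)\leq\liminf_n j(u_n)=m$, so $\bar u$ is a minimizer.

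For uniqueness, I would argue by strict convexity. Suppose $\bar u_1,\bar u_2$ are two minimizers with $\bar u_1\neq\bar u_2$. The map $u\mapsto\alpha\|u\|_{\M}$ is convex and $u\mapsto\tfrac12\|Su-\mathbf z\|_{\mathcal Y}^2$ is convex (as $S$ is affine and the Hilbert-space norm squared is convex), so $j$ is convex and $j$ is constant equal to $m$ on the segment $[\bar u_1,\bar u_2]$. Writing $\bar u_t=(1-t)\bar u_1+t\bar u_2$, convexity of both pieces forces the quadratic piece to be affine along the segment, which for the strictly convex function $\tfrac12\|\cdot-\mathbf z\|_{\mathcal Y}^2$ is only possible if $S\bar u_1=S\bar u_2$, i.e.\ $S(\bar u_1-\bar u_2)=0$ in $\mathcal Y$; in particular the state components agree, $\bar y_1=\bar y_2$. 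Then the reduced cost reduces to $\alpha\|\cdot\|_{\M}$ along the segment, which is constant, and since $S(\bar u_1-\bar u_2)=0$ one has in particular $y[\bar u_1-\bar u_2]\equiv 0$ as the weak (or weaker) solution of \eqref{state_equation} with zero initial data and right-hand side $\bar u_1-\bar u_2$; testing the corresponding integral identity against smooth test functions shows $\bar u_1-\bar u_2=0$ as an element of $\M$ (the control-to-state map with zero data is injective, since $\langle \bar u_1-\bar u_2,v\rangle=0$ for all $v$ in a dense subset of $\C$). This contradiction proves uniqueness. The main obstacle is the precise justification that $S$ is weak-star-to-weak continuous and injective on the measure space $\M$; both follow from the well-posedness and a priori bounds collected in Section \ref{sec:regularity}, but care is needed to identify the correct dualities, especially in the vector-measure case $\M=\ML$ where one uses the embedding $\ML\hookrightarrow L^2(I,V^*)$ from \eqref{embedding1}.
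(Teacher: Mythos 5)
Your overall strategy coincides with the paper's: the direct method combined with the sequential Banach--Alaoglu theorem (using separability of $\C$), weak-star lower semicontinuity of $\|\cdot\|_{\M}$ for the regularization term, and uniqueness from injectivity of $S$ together with strict convexity of the tracking functional. The one place where you diverge is the treatment of the tracking term along the minimizing sequence, and that is also where your argument has a gap. The paper establishes that $S$ is weak-star-to-\emph{strong} sequentially continuous (via compact embeddings and the Aubin--Lions lemma, following \cite[Lemma 6.1]{KunischTrautmannVexler14}), so the tracking term simply converges. You instead claim only weak-star-to-\emph{weak} continuity and invoke weak lower semicontinuity of the convex functional $\tfrac12\|\cdot-\mathbf{z}\|_{\mathcal Y}^2$; this would indeed suffice, but your justification --- ``the linear part is bounded, hence $u_n\rightharpoonup^\ast\bar u$ forces $Su_n\rightharpoonup S\bar u$'' --- is not valid as stated. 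Norm-boundedness of a linear operator defined on a dual space does \emph{not} imply weak-star-to-weak continuity (a bounded linear functional on $\ell^1=c_0^*$ given by an element of $\ell^\infty\setminus c_0$ is a counterexample). For each $\mathbf{w}\in\mathcal Y$ you must check that $u\mapsto(\hat S(u,0,0),\mathbf{w})_{\mathcal Y}$ is weak-star continuous on $\M$, i.e.\ is represented by an element of the predual $\C$. This is true here, but it is precisely the statement that the adjoint state $S^\star\mathbf{w}$ belongs to $\mathcal C(\bar I,V)\hookrightarrow\C$ (Proposition \ref{prop:exist weak} applied to the adjoint problem, as used in Section \ref{sec:existence_optimality} to define $S^\star$); your parenthetical remark gestures at this duality but does not establish it.

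The remaining steps are sound and match the paper's reasoning: coercivity from $\alpha\|u_n\|_{\M}\le j(u_n)$; the convexity argument showing that two minimizers $\bar u_1,\bar u_2$ must satisfy $S\bar u_1=S\bar u_2$ because $\tfrac12\|\cdot-\mathbf{z}\|_{\mathcal Y}^2$ is strictly convex and $S$ is affine; and injectivity of the solution operator with zero data, obtained by testing the integral identity \eqref{int_id1} (resp.\ its very weak analogue for $\M=\LM$) against a set of test functions dense in $\C$. If you repair the continuity step, either by the duality argument sketched above or by proving strong convergence of $Su_n$ via compactness as the paper does, the proof is complete.
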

\begin{proof}
The control-to-state operator $S$ is weak-star-to-strong sequential continuous, i.e., if $\{u_n\}\subset \M$ and $u_n\rightharpoonup^\ast u$ in $\M$, then $Su_n\rightarrow Su$ in $\mathcal Y$.  The proof of this continuity property is similar to \cite[Lemma 6.1]{KunischTrautmannVexler14} in the case of solutions by transposition resp. very weak solutions. The strong continuity follows from the compact embeddings and well known Aubin-Lions-Lemma. Then the direct method of calculus of variations combined with the sequential Banach-Alaoglu theorem ($\C$ is separable) can be applied to show existence of an optimal control. Additionally the control is unique since the control-to-state operator $S$ is injective and the data tracking functional is strictly convex.
\end{proof}
Owing to Proposition \ref{prop:exist weaker}
the optimal control $\bar u\in\M$ satisfies the inequalities
\begin{equation}
 \alpha\|\bar{u}\|_{\M}\leq j(\bar u)\leq j(0)=\half\|S(0)-\mathbf{z}\|_{\mathcal Y}^2
 \leq c\big(\|y^0\|_H+\|y^1\|_{V^*}+\|\mathbf{z}\|_\mathcal{Y}\big)^2
\label{uopt0}
\end{equation}
and thus
\begin{equation}
 \|\bar u\|_{\M}\leq c\big(\|\mathbf{y}\|_{H\times V^*}+\|z\|_{\mathcal Y}\big)^2\leq C.
\label{uopt}
\end{equation}
Hereafter $C>0$ depends on the norms of data.
\par Next we discuss first order optimality conditions.
{We introduce the adjoint control-to-solution operator
$S^\star\colon \mathcal{Y}\rightarrow C(\bar I,V)\hookrightarrow\C$, $(\phi,p^1,p^0)\mapsto p$ where $p$ is a weak solution of \eqref{dual_state_equation}.
This operator is well defined and bounded according to Proposition \ref{prop:exist weak}.
\par We also need the operator
$A^{-1}\colon V^\ast\rightarrow V$, $f\mapsto w$}
where $w\in V$ is the unique solution of
\begin{equation}\label{ode}
(\kappa\partial_x w,\partial_x v)_H=\langle f,v\rangle_{\Omega}\quad \forall v\in V.
\end{equation}
\par The next result provides the necessary and sufficient optimality condition for the optimal pair $(\bar{p},\bar{u})$.
\begin{proposition}\label{prop:opt_cond}
An element $\bar u\in\M$ is an optimal control of \eqref{measure_control_problem} if and only if
\begin{equation}\label{subgradient_cond_p}
-\bar p\in \alpha\partial\|\bar u\|_{\M},
\end{equation}
or {equivalently}
\begin{equation}\label{sub_gradient_definition}
\langle -\bar p,u-\bar u\rangle_{\C,\,\M}+\alpha\|\bar u\|_{\M}\leq \alpha\|u\|_{\M}~~\forall u\in\M
\end{equation}
where
{$\bar p=S^*\big(\bar y-z_1,-(\bar y(T)-z_2),A^{-1}(\rho\partial_t\bar y-z_3)\big)$}
with $(\bar{y},\bar{y}(T),{\rho}\partial_t\bar{y}(T))=\hat S(\bar u,y^0,y^1)$.
\end{proposition}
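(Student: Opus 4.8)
The plan is to pass to the reduced functional $j(u)=\half\|Su-\mathbf{z}\|_{\mathcal Y}^2+\alpha\|u\|_{\M}$, which is convex and continuous on $\M$, and to invoke the convex Fermat rule: $\bar u$ solves \eqref{measure_control_problem} if and only if $0\in\partial j(\bar u)$, which already accounts for the biconditional in the statement. I would then split $j=j_1+j_2$ with $j_1(u)=\half\|Su-\mathbf{z}\|_{\mathcal Y}^2$ and $j_2(u)=\alpha\|u\|_{\M}$. Since $S$ is affine and bounded and $\mathcal Y$ is Hilbert, $j_1$ is Fréchet differentiable on all of $\M$, so the Moreau--Rockafellar sum rule (applicable because $j_1$ is continuous) gives $\partial j(\bar u)=\{\nabla j_1(\bar u)\}+\alpha\,\partial\|\bar u\|_{\M}$, and the optimality condition becomes $-\nabla j_1(\bar u)\in\alpha\,\partial\|\bar u\|_{\M}$. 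Everything then reduces to identifying $\nabla j_1(\bar u)\in\M^\ast$ with the adjoint state $\bar p\in\C\hookrightarrow\M^\ast$.

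For that identification I would write $S'$ for the linear part of $S$, so that for $h\in\M$ the function $y_h:=\hat S(h,0,0)$ solves \eqref{state_equation} with source $h$ and vanishing initial data and $S'h=(y_h,y_h(T),\rho\partial_ty_h(T))$. Differentiating, $\langle\nabla j_1(\bar u),h\rangle_{\C,\M}=\langle S\bar u-\mathbf{z},S'h\rangle_{\mathcal Y}$, and I would expand the right-hand side in the coefficient-weighted inner product of $\mathcal Y$: the first two slots give the $L^2(I,H_\rho)$- and $H_\rho$-pairings of $\bar y-z_1$ and $\bar y(T)-z_2$ against $y_h$ and $y_h(T)$, while the $\mathcal V_\kappa^\ast$-slot, by the definition of that norm, relation \eqref{ode}, and the symmetry of $A^{-1}$, becomes $\langle\rho\partial_ty_h(T),A^{-1}(\rho\partial_t\bar y(T)-z_3)\rangle_\Omega$. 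The resulting expression is exactly the left-hand side of the transposition identity \eqref{solution_transposition} written for $y=y_h$ (the initial-data terms drop out) with the test triple $(\phi,p^1,p^0)=\big(\bar y-z_1,\,-(\bar y(T)-z_2),\,A^{-1}(\rho\partial_t\bar y(T)-z_3)\big)$. By Theorem~\ref{regML2} (for $\M=\ML$) resp. Proposition~\ref{prop:exist weaker} (for $\M=\LM$) together with the mapping property of $A^{-1}$, this triple lies in $\L\times H\times V$, so the corresponding adjoint solution $p=\bar p=S^\star(\phi,p^1,p^0)$ of \eqref{dual_state_equation} is well defined in $\mathcal C(\bar I,V)\hookrightarrow\C$ by Proposition~\ref{prop:exist weak}. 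Then \eqref{solution_transposition} yields $\langle\nabla j_1(\bar u),h\rangle_{\C,\M}=\int_I\langle h,\bar p\rangle_\Omega\,\mathrm dt=\langle\bar p,h\rangle_{\C,\M}$ for all $h\in\M$, i.e. $\nabla j_1(\bar u)=\bar p$.

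Combining the two steps gives $-\bar p\in\alpha\,\partial\|\bar u\|_{\M}$, which is \eqref{subgradient_cond_p}; the equivalent inequality \eqref{sub_gradient_definition} is merely the definition of the subdifferential of the convex functional $\alpha\|\cdot\|_{\M}$ at $\bar u$ (dividing by $\alpha>0$). I expect the real work to be concentrated in the second step: one must keep careful track of the coefficient-dependent $\mathcal Y$-inner product — in particular the appearance of $A^{-1}$ from the $\mathcal V_\kappa^\ast$-term and of the weights $\rho$ and $\kappa$ — and verify that the data triple produced has precisely the regularity under which \eqref{solution_transposition} holds and $S^\star$ is defined; once that is settled, the convex-analytic part (Fermat rule, sum rule, differentiability of $j_1$) is entirely routine.
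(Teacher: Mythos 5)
Your argument is correct. Note, however, that the paper itself gives no proof of this proposition: it simply defers to \cite[Theorem 7.1]{KunischTrautmannVexler14} for $\M=\ML$ and to \cite[Theorem 3.2]{CasasClasonKunisch2013} for $\M=\LM$. What you have written is the standard self-contained derivation that those references carry out (Fermat rule for the reduced convex functional, Moreau--Rockafellar splitting off the Fr\'echet-differentiable tracking part, and identification of its gradient with the adjoint state), so in substance you are reconstructing the cited proofs rather than diverging from the paper. The added value of your version is precisely the bookkeeping that the citation hides and that must be adapted to the present setting: checking that the coefficient-weighted $\mathcal Y$-inner product turns the $\mathcal{V}_\kappa^*$-slot into $\langle\rho\partial_ty_h(T),A^{-1}(\rho\partial_t\bar y(T)-z_3)\rangle_\Omega$ via \eqref{ode}, that the sign convention $p^1=-(\bar y(T)-z_2)$ in the terminal data of \eqref{dual_state_equation} matches the $-(\rho y(T),p^1)_H$ term in the transposition identity \eqref{solution_transposition}, that the resulting data triple lies in $L^2(I\times\Omega)\times V\times H$ (using Theorem \ref{regML2} resp.\ Proposition \ref{prop:exist weaker} and the mapping property of $A^{-1}$) so that $\bar p=S^\star(\phi,p^1,p^0)\in\mathcal C(\bar I,V)\hookrightarrow\C$ is well defined by Proposition \ref{prop:exist weak}, and that the embedding \eqref{embedding1} makes $y_h$ a solution by transposition so that \eqref{solution_transposition} is applicable. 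One minor point worth making explicit: a priori $\nabla j_1(\bar u)$ lives in $\M^\ast=\C^{\ast\ast}$, and the content of the identification step is that it is represented by the element $\bar p$ of $\C$, which is what allows \eqref{subgradient_cond_p} to be restated as the pairing inequality \eqref{sub_gradient_definition} over $\C\times\M$.
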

\begin{proof}
For $\M=\ML$ the proof of
{\cite[Theorem 7.1]{KunischTrautmannVexler14}} remains valid; for $\M={\LwM}$ it is similar {to~\cite[Theorem 3.2]{CasasClasonKunisch2013}.}
\end{proof}
To discuss further the properties
of the optimal control $\bar u$, we introduce the Jordan decomposition of a signed measure $\mu \in \m$, see \cite{Bogachev07}. There exists unique elements $\mu^\pm\in \m^+$ such that $\mu=\mu^+-\mu^-$. Moreover, we recall the polar decomposition of a vector measure
$\mu\in\ML$: $\mathrm d\mu=\mu'\mathrm d|\mu|$, where $\mu'$ is the Radon-Nikodym-derivative of $\mu$ with respect to $|\mu|$.
\par The subgradient condition in Proposition \ref{prop:opt_cond} implies the following conditions.
\begin{proposition}
Let $\bar u\in\M$ be the optimal control of \eqref{measure_control_problem} and $\bar p\in \C$ {be} the corresponding adjoint state. Then there holds
$\|\bar p\|_{\C}\leq \alpha$.

In the cases $\M={\LwM}$ and $\M=\ML$ there respectively hold
\[
\operatorname{supp}\bar u^{\pm}(t)\subset\{x\in \Omega\,|\,\bar p(t,x)=\mp\|\bar p(t,\cdot)\|_{\c}\}\quad\text{for a.a.}~t\in I
\]
and
\begin{equation}\label{supp ML2}
\operatorname{supp}|\bar u|\subset \{x\in \Omega\,|\,\|\bar p(\cdot,x)\|_{\lt}=\alpha\},\ \ \bar u'=-\alpha^{-1}\bar p~~\text{in}~~L^1\big((\Omega,|\bar u|),L^2(I)\big).
\end{equation}
\end{proposition}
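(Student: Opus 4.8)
The plan is to unfold the subgradient condition \eqref{sub_gradient_definition} and exploit the explicit structure of the dual norm pairing on $\C\times\M$ in each of the two cases. The starting point in both cases is to test \eqref{sub_gradient_definition} with $u=0$ and with $u=2\bar u$, which yields $\langle-\bar p,\bar u\rangle_{\C,\M}=\alpha\|\bar u\|_{\M}$; combined with $\langle-\bar p,u\rangle_{\C,\M}\le\alpha\|u\|_{\M}$ for all $u$ (obtained by rescaling any $u$ and letting the scale tend to $+\infty$), we get $\|\bar p\|_{\C}\le\alpha$ from the definition of the dual norm, together with the complementarity identity $\langle-\bar p,\bar u\rangle_{\C,\M}=\alpha\|\bar u\|_{\M}$. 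This last equality is the workhorse: it says that $-\bar p/\alpha$ is an element of norm $\le 1$ in $\C$ that \emph{attains} the pairing against $\bar u/\|\bar u\|_{\M}$, i.e. it is an alignment (Hahn--Banach type) relation.

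For $\M=\ML$, I would use the polar decomposition $\mathrm d\bar u=\bar u'\,\mathrm d|\bar u|$ with $\bar u'\in L^2(I)$, $\|\bar u'(x)\|_{\lt}=1$ for $|\bar u|$-a.a.\ $x$, so that $\|\bar u\|_{\ML}=\int_\Omega\|\bar u'(x)\|_{\lt}\,\mathrm d|\bar u|(x)=|\bar u|(\Omega)$. The pairing becomes
\[
\langle-\bar p,\bar u\rangle=\int_\Omega\big(-\bar p(\cdot,x),\bar u'(x)\big)_{\lt}\,\mathrm d|\bar u|(x)
\le\int_\Omega\|\bar p(\cdot,x)\|_{\lt}\,\mathrm d|\bar u|(x)\le\alpha|\bar u|(\Omega),
\]
using $\|\bar p(\cdot,x)\|_{\lt}\le\|\bar p\|_{\ML^\ast\text{-norm}}=\|\bar p\|_{\CL}\le\alpha$ pointwise. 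Since the extreme ends are equal, both inequalities are equalities $|\bar u|$-a.e., forcing $\|\bar p(\cdot,x)\|_{\lt}=\alpha$ and $\big(-\bar p(\cdot,x),\bar u'(x)\big)_{\lt}=\|\bar p(\cdot,x)\|_{\lt}\|\bar u'(x)\|_{\lt}$ on $\operatorname{supp}|\bar u|$; the Cauchy--Schwarz equality case in $\lt$ then gives $\bar u'(x)=-\alpha^{-1}\bar p(\cdot,x)$ in $L^1\big((\Omega,|\bar u|),L^2(I)\big)$, which is exactly \eqref{supp ML2}. The support statement follows because outside $\{x:\|\bar p(\cdot,x)\|_{\lt}=\alpha\}$ one cannot have equality, so $|\bar u|$ charges no such set.

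For $\M=\LM$, the same scheme runs with the roles of $t$ and $x$ interchanged: now $\|\bar u\|_{\LM}^2=\int_I\|\bar u(t)\|_{\m}^2\,\mathrm dt$ and the pairing is $\int_I\langle-\bar p(t,\cdot),\bar u(t)\rangle_{\c,\m}\,\mathrm dt$. Using the Jordan decomposition $\bar u(t)=\bar u^+(t)-\bar u^-(t)$ and the elementary bound $\langle-\bar p(t,\cdot),\bar u(t)\rangle_{\c,\m}\le\|\bar p(t,\cdot)\|_{\c}\|\bar u(t)\|_{\m}$, together with $\|\bar p(t,\cdot)\|_{\c}\le\alpha$ a.e.\ (again from $\|\bar p\|_{\LC}\le\alpha$), a pointwise-in-$t$ argument shows equality a.e., and then the extremality of the pairing against a signed measure forces $\bar u^+(t)$ to be supported where $-\bar p(t,\cdot)$ attains its max $+\|\bar p(t,\cdot)\|_{\c}$ and $\bar u^-(t)$ where it attains its min; equivalently $\operatorname{supp}\bar u^{\pm}(t)\subset\{x:\bar p(t,x)=\mp\|\bar p(t,\cdot)\|_{\c}\}$.

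\emph{Main obstacle.} The delicate point is the passage from the \emph{integrated} equality (over $\Omega$ resp.\ over $I$) to the \emph{pointwise}-a.e.\ equality, which requires care with the measurability of $x\mapsto\|\bar p(\cdot,x)\|_{\lt}$ (resp.\ $t\mapsto\|\bar p(t,\cdot)\|_{\c}$) and the validity of the disintegration/Fubini argument in the vector-measure setting — these facts are exactly the ones established for $\ML$ in \cite{KunischTrautmannVexler14} and for $\LM$ in \cite{CasasClasonKunisch2013}, so I would cite those developments and adapt the Cauchy--Schwarz / extremal-measure equality-case arguments rather than redo the measure-theoretic groundwork.
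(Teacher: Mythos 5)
Your argument is essentially correct and is the standard subdifferential/alignment argument that the paper itself does not spell out -- it only cites \cite{CasasClasonKunisch2013,KunischPieperVexler2014}, where exactly this reasoning (complementarity $\langle-\bar p,\bar u\rangle=\alpha\|\bar u\|_{\M}$ plus the equality case in the duality pairing) is carried out. One small slip in the $\LM$ case: the bound $\|\bar p\|_{\LC}\le\alpha$ is an $L^2$-in-time bound and does \emph{not} give $\|\bar p(t,\cdot)\|_{\c}\le\alpha$ for a.a.\ $t$, so that parenthetical is unjustified; it is also unnecessary, since the support statement follows from the vanishing a.e.\ of the nonnegative integrand $\|\bar p(t,\cdot)\|_{\c}\|\bar u(t)\|_{\m}-\langle-\bar p(t,\cdot),\bar u(t)\rangle_{\c,\m}$, which you get by comparing the integrated equality with the chain $\int_I\langle-\bar p(t),\bar u(t)\rangle\,\mathrm dt\le\int_I\|\bar p(t)\|_{\c}\|\bar u(t)\|_{\m}\,\mathrm dt\le\|\bar p\|_{\LC}\|\bar u\|_{\LM}\le\alpha\|\bar u\|_{\LM}$. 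With that correction, and the measurability/disintegration points delegated to the cited works as you propose, the proof is sound.
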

\begin{proof}
A detailed discussion of the proof of these results can be found in \cite{CasasClasonKunisch2013,KunischPieperVexler2014}.
\end{proof}
\par The regularity of the adjoint state $\bar{p}$ is now applied to show improved regularity of the optimal control $\bar u$.
\begin{theorem}\label{thm:imp_reg_control}
Let $\M=\ML$, $\mathbf{z}\in \mathcal{Y}^1:=L^2(I,V)\times V\times H$, $\mathbf{y}\in V\times H$
and $\bar u$ be the optimal control of \eqref{measure_control_problem}. Then $\bar u\in \mathcal C^1(\bar I,\m)$ and the following bound holds
\[
\|\bar u\|_{\mathcal C^1(\bar I,\m)}\leq {C=}C\big(\|\mathbf{y}\|_{V\times H},\|\mathbf{z}\|_{\mathcal{Y}^1}\big).
\]
\end{theorem}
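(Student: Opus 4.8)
The plan is to bootstrap regularity of $\bar p$ through the adjoint equation \eqref{dual_state_equation} and then transfer it to $\bar u$ via the polar‐decomposition identity \eqref{supp ML2}. First I would examine the data for the adjoint problem. We have $\bar p = S^\star\big(\bar y - z_1,\,-(\bar y(T)-z_2),\,A^{-1}(\rho\partial_t\bar y - z_3)\big)$, so the right‐hand side of the adjoint equation is $\rho\phi$ with $\phi = \bar y - z_1$, and the terminal data are $p^0 = A^{-1}(\rho\partial_t\bar y - z_3)$ and $p^1 = -(\bar y(T)-z_2)$. By Theorem \ref{regML2}, since $\bar u\in\ML$ and $\mathbf{y}\in V\times H$, we get $\bar y\in\mathcal C(\bar I,V)\cap\mathcal C^1(\bar I,H)$, hence $\partial_t\bar y\in\mathcal C(\bar I,H)$ and $\rho\partial_t\bar y(T)\in H\hookrightarrow V^\ast$, so $p^0 = A^{-1}(\rho\partial_t\bar y(T) - z_3)\in V^2$ using $z_3\in H\hookrightarrow V^\ast$ and the elliptic regularity for \eqref{ode} with $\kappa\in H^1(\Omega)$. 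Also $p^1 = -(\bar y(T)-z_2)\in V$ since $\bar y(T)\in V$ and $z_2\in V$. Finally $\phi = \bar y - z_1\in L^2(I,V)$ since $\bar y\in\mathcal C(\bar I,V)$ and $z_1\in L^2(I,V)$. Thus the adjoint data satisfy $(\phi,p^0,p^1)\in L^2(I,V)\times V^2\times V$, which is exactly the hypothesis of Item 2 of Proposition \ref{prop:exist weak} (with the time direction reversed), so $\bar p\in\mathcal C(\bar I,V^2)\cap\mathcal C^1(\bar I,V)\cap H^2(I,H)$. In particular $\bar p\in H^1(I,V^2)\hookrightarrow H^1(I,\mathcal C_0(\Omega))$ by the 1D Sobolev embedding $V^2=H^2(\Omega)\cap V\hookrightarrow\mathcal C_0(\bar\Omega)$, and the corresponding norm is bounded by $c\big(\|\phi\|_{L^2(I,V)}+\|p^0\|_{V^2}+\|p^1\|_V\big)\leq C\big(\|\mathbf{y}\|_{V\times H},\|\mathbf{z}\|_{\mathcal Y^1}\big)$, where I have used \eqref{aprioriy} and \eqref{uopt} to bound $\|\bar y\|_{\mathcal C(\bar I,V)}+\|\partial_t\bar y\|_{\mathcal C(\bar I,H)}$.

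Next I would use the optimality structure. From \eqref{supp ML2} we have $\bar u' = -\alpha^{-1}\bar p$ in $L^1\big((\Omega,|\bar u|),L^2(I)\big)$ and $\mathrm d\bar u = \bar u'\,\mathrm d|\bar u|$, so $\bar u$ is recovered from the scalar measure $|\bar u|\in\m^+$ (supported where $\|\bar p(\cdot,x)\|_{\lt}=\alpha$) and the $\mathcal C_0(\bar I\times\bar\Omega)$-function $\bar p$ by $\mathrm d\bar u(t,x) = -\alpha^{-1}\bar p(t,x)\,\mathrm d|\bar u|(x)$. The map $t\mapsto \bar u(t,\cdot)\in\m$ is then $t\mapsto -\alpha^{-1}\bar p(t,\cdot)\,|\bar u|$, i.e. the measure with density $-\alpha^{-1}\bar p(t,\cdot)$ against the fixed finite measure $|\bar u|$. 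Since $\bar p\in\mathcal C^1(\bar I,\mathcal C_0(\bar\Omega))$ — which follows from $\bar p\in\mathcal C(\bar I,V^2)\cap\mathcal C^1(\bar I,V)$ together with $V^2\hookrightarrow\mathcal C_0(\bar\Omega)$ and $V\hookrightarrow\mathcal C_0(\bar\Omega)$ in 1D — the difference quotients $\tfrac1h\big(\bar u(t+h,\cdot)-\bar u(t,\cdot)\big) = -\alpha^{-1}\tfrac1h\big(\bar p(t+h,\cdot)-\bar p(t,\cdot)\big)\,|\bar u|$ converge uniformly in $t$ to $-\alpha^{-1}\partial_t\bar p(t,\cdot)\,|\bar u|$ in the total variation norm, because for any $w\in\mathcal C_0(\bar\Omega)$ one has $\big\|w\,|\bar u|\big\|_{\m}\leq\|w\|_{\mathcal C_0(\bar\Omega)}\,\|\bar u\|_{\M}$ and $\|\bar u\|_{\M}=|\bar u|(\bar\Omega)\leq C$ by \eqref{uopt}. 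Hence $\bar u\in\mathcal C^1(\bar I,\m)$ with $\partial_t\bar u(t,\cdot) = -\alpha^{-1}\partial_t\bar p(t,\cdot)\,|\bar u|$, and
\[
\|\bar u\|_{\mathcal C^1(\bar I,\m)}
\leq\alpha^{-1}\|\bar p\|_{\mathcal C^1(\bar I,\mathcal C_0(\bar\Omega))}\,\|\bar u\|_{\M}
\leq C\big(\|\mathbf{y}\|_{V\times H},\|\mathbf{z}\|_{\mathcal Y^1}\big),
\]
which is the claimed estimate.

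The main obstacle I anticipate is the rigorous justification of the representation $\mathrm d\bar u = \bar u'\,\mathrm d|\bar u|$ with $\bar u' = -\alpha^{-1}\bar p$ simultaneously in the "vector measure" sense and as a genuine $\mathcal C$-valued function of $t$: \eqref{supp ML2} is an identity in $L^1\big((\Omega,|\bar u|),L^2(I)\big)$, i.e. between equivalence classes, and one must check that choosing the continuous representative $\bar p$ of $\bar u'$ is legitimate and that the resulting $t\mapsto\bar u(t,\cdot)$ genuinely agrees with the given $\bar u$ as an element of $\ML$. This is handled by testing against $v\in\mathcal C_0(\bar I\times\bar\Omega)$: $\langle\bar u,v\rangle = \int_I\int_\Omega \bar u'(x)(t)\,v(t,x)\,\mathrm d|\bar u|(x)\,\mathrm dt = -\alpha^{-1}\int_\Omega\int_I\bar p(t,x)v(t,x)\,\mathrm dt\,\mathrm d|\bar u|(x)$ by Fubini (valid since $\bar p$ is bounded and $|\bar u|$ finite), which shows the two descriptions coincide. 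A secondary technical point is verifying the claimed elliptic regularity $A^{-1}\colon H\to V^2$ for merely $H^1$ coefficient $\kappa$; in 1D this is elementary since $\partial_x(\kappa\partial_x w)\in H$ with $\kappa\in H^1\hookrightarrow\mathcal C(\bar\Omega)$, $\kappa\geq\nu>0$ forces $\kappa\partial_x w\in H^1(\Omega)$ and hence $\partial_x w\in H^1(\Omega)$, i.e. $w\in H^2(\Omega)$, with the norm bound following from the open mapping theorem. Once these two points are dispatched, the rest is the routine difference-quotient argument outlined above.
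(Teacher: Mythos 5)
Your proposal is correct and follows essentially the same route as the paper: regularity of $\bar p$ in $\mathcal C(\bar I,V^2)\cap\mathcal C^1(\bar I,V)$ via Proposition \ref{prop:exist weak} applied to the adjoint data, then the difference-quotient argument on $\bar u(t,\cdot)=-\alpha^{-1}\bar p(t,\cdot)\,|\bar u|$ using the 1D embedding $V\hookrightarrow\mathcal C_0(\Omega)$ and the bound \eqref{uopt} on $\|\bar u\|_{\M}$. (The intermediate claim $\bar p\in H^1(I,V^2)$ does not follow from the stated regularity, but it is never needed: the argument only uses $\bar p\in\mathcal C^1(\bar I,\mathcal C_0(\Omega))$, which you also derive correctly.)
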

\begin{proof}
There holds $\bar y \in {\mathcal C(\bar{I},V)\cap \mathcal C^1(\bar{I},H)}$ according to Theorem \ref{regML2}. Thus, the optimal adjoint state has the following regularity
$\bar p \in \mathcal C(\bar I,V^2)\cap \mathcal C^1(\bar{I},V)$
by Proposition \ref{prop:exist weak}. We have $\bar u=-{\alpha^{-1}}
\bar p\,|\bar u|$ according to \eqref{supp ML2}.
Moreover, we define the function
\[
 w=-\alpha^{-1}(\partial_t\bar p)|\bar u|
\]
and show that it serves the time derivative of $\bar u$.
For any $t_0,t\in\bar{I}$ and $t_0\neq t$, we define the difference quotient
$\bar{u}{(t_0,t)}=\big(\bar{u}(t)-\bar{u}(t_0)\big)/(t-t_0)$. Then we consider
\begin{align*}
 \|\bar{u}{(t_0,t)}-w(t_0)\|_{\m}&
 =\alpha^{-1}\sup_{\|\phi\|_{\c}\leq 1}\int_\Omega\big(\bar{p}(t_0,t)-\partial_t{\bar{p}}(t_0)\big)\phi~\mathrm d|\bar u|\\
&\leq\alpha^{-1}\|\bar{p}(t_0,t)-\partial_t{\bar{p}}(t_0)\|_{\c}\|\bar u\|_{\ML}\\
&\leq c\alpha^{-1}\|\bar{p}(t_0,t)-\partial_t{\bar{p}}(t_0)\|_{V}\|\bar u\|_{\ML}\rightarrow 0
\end{align*}
as $t\to t_0$ since $\bar p\in \mathcal C^1(\bar I,V)$.
Next, quite similarly we get
\[
\|w(t)-w(t_0)\|_{\m}\leq c\alpha^{-1}\|\partial_t\bar p(t)-\partial_t\bar p(t_0)\|_{V}\|\bar u\|_{\ML}\to 0
\]
as $t\to t_0$. Consequently $\partial_t\bar u=w\in \mathcal{C}(\bar{I},\m)$.
Finally, we bound $\partial_t \bar u$ as follows
\begin{multline*}
\|\partial_t\bar u\|_{\mathcal C(\bar I,\m)}\leq c\alpha^{-1}\|\partial_t \bar p\|_{\mathcal C(\bar I,V)}\|\bar u\|_{\ML}\\
\leq c_1\alpha^{-1}\big(\|\bar y-z_1\|_{L^2(I,V)}+\|\bar y(T)-z_2\|_{V}+\|\rho\partial_t \bar y(T)-z_3\|_H\big)\|\bar u\|_{\ML}\\
\leq c_2\alpha^{-1}\big(\|\bar u\|_{\ML}+\|\mathbf{y}\|_{V\times H}+\|\mathbf{z}\|_{\mathcal{Y}^1}\big)\|\bar u\|_{\ML}
\end{multline*}
owing to Proposition \ref{prop:exist weak} and Theorem \ref{regML2}. Utilizing bound \eqref{uopt} for $\bar{u}$, we complete the proof.
\end{proof}
\section{Discretization of the state equation}
\label{sec:disc state}
We introduce the uniform {grid $t_m=m\tau$ in time with the step $\tau=T/M$ and a non-uniform grid $0=x_0 < x_1 <\ldots < x_N=L$ in space
with the steps $h_j=x_j-x_{j-1}$, where $M\geq 2$ and $N\geq 2$.
Let also} $h=\max_{j=1,\ldots,N}h_j$, $h_{\rm\min}=\min_{j=1,\ldots,N}h_j$ and $\rh=(\tau,h)$.
We assume that the space grid is \textit{quasi-uniform}, i.e., $h\leq c_1h_{\rm\min}$.
Hereafter $c,c_1,C$, etc., are grid-independent.
\par Let $V_\tau\subset H^1(I)$ {and $V_h\subset V$ be the spaces of piecewise linear finite elements with respect to the introduced grids on $\bar{I}$ and $\bar{\Omega}$.}
\par {We approximate the state variable $y$ by $y_\rh\in V_\rh:=V_\tau\otimes V_h\subset H^1(I,V)$ and additionally $\partial_ty(T)$ by $y_{Th}^1\in V_h$.}
For $(u,y^0,y^1)\in \M\times H\times V^*$ \textit{the discrete state equation} has the following form
\begin{align}
B_\sigma(y_\rh,v)+(\rho {y_{Th}^1},v(T))_{H}&=\langle u,v\rangle_{\M,\,\C}+\langle\rho y^1,v(0)\rangle_{\Omega}~~\forall v\in V_\rh,
\label{discrete_state_equation1}\\
(\rho y_\rh(0),\varphi)_{H}&=(\rho y^0,\varphi)_{H}~~\forall \varphi\in V_h,
\label{discrete_state_equation2}
\end{align}
involving the indefinite symmetric bilinear form
\begin{multline}
B_\sigma(y,v):=-(\rho\partial_ty,\partial_t v)_{\L}
-\big(\sigma-\frac16\big)\tau^2(\kappa\partial_x\partial_ty,\partial_x\partial_tv)_{L^2(I\times\Omega)}\\
+(\kappa\partial_xy,\partial_xv)_{L^2(I\times\Omega)},
\label{bilinear_form}
\end{multline}
{with the grid independent parameter $\sigma$, cf. \eqref{int_id1}.
This definition follows \cite{Zlotnik94} but notice carefully that normally
$y_\rh$ is uniquely defined by \eqref{discrete_state_equation1} with $v(T)=0$ and \eqref{discrete_state_equation2}.
To treat general $v$, we need $y_{Th}^1$.
}
\begin{remark}
The second term on the right {hand-side of} \eqref{bilinear_form}
{regularizes the Galerkin (i.e. projection) method}
with respect to {bilinear form \eqref{dif bilinear_form}.}
It is included to ensure unconditional stability for {suitable values of $\sigma$.}
Moreover, the term \[-(1/6)\,\tau^2(\kappa\partial_x\partial_ty,\partial_x\partial_tv)_{L^2(I\times\Omega)}\] is the error term of {the compound} trapezoidal rule applied for the calculation of the temporal integral in $(\kappa\partial_xy,\partial_xv)_{L^2(I\times\Omega)}$. So that, in particular, for $\sigma=0$ in \eqref{bilinear_form} this temporal integral is calculated using {this}
rule whereas for $\sigma=1/6$ it is not approximated.
\end{remark}

Next we recall the inverse inequality
\begin{equation}
 {\|\varphi\|_{\mathcal{V}_\kappa}}\leq\alpha_h\|\varphi\|_{{H_\rho}}~~\forall\varphi\in V_h
\label{inv_ineq}
\end{equation}
where the least constant satisfies $c_1h^{-1}\leq\alpha_h\leq c_2h^{-1}$
for the quasi-uniform grid.
For $\sigma\leq 1/4$ we need to state conditions linking the temporal and spatial grids to ensure stability of the numerical method.
{
\begin{assumption}\label{as:stability}
In what follows, let
\begin{gather}
\textstyle{ \text{if}~~ \sigma<\frac14,~~ \text{then}~~ \tau^2\alpha_h^2\big(\frac14-\sigma\big)\leq 1-\varepsilon_0^2~~\text{for some}~~0<\varepsilon_0<1,}
\label{stab cond1}\\
\textstyle{  \text{if}~~ \sigma\leq\frac14,~~ \text{then}~~ \tau^2\alpha_h^2\big(\frac{1+\varepsilon_1^2}{4}-\sigma\big)\leq 1~~\text{for some}~~0<\varepsilon_1\leq 1.}
\label{stab cond2}
\end{gather}
\end{assumption}
}
\begin{remark}
{The parameters $\varepsilon_0$ and $\varepsilon_1$ can be chosen arbitrarily small
but then constants in the stability and error estimates for our {FEM}
can tend to infinity.}
\end{remark}
\begin{remark}
As we see {below in Section \ref{tistfo},
the method is}
related to well known time-stepping methods, in particular, to the explicit Leap-Frog-method for $\sigma=0$.
Then conditions \eqref{stab cond1} and \eqref{stab cond2} reduce to a CFL-type one
$\tau\alpha_h\leq 2\sqrt{1-\varepsilon_0^2}$.
For $\sigma=1/4$ the method is related to the Crank-Nicolson
scheme and is unconditionally stable but in a weaker norm than we need to derive our error estimates so that we impose a very weak CFL-type condition
$\tau\alpha_h\leq 2/\varepsilon_1$.
\end{remark}

\par Below in proofs we utilize the auxiliary squared norms
\begin{gather*}
\|\varphi\|_{H_\tau^0}^2
:=\|\varphi\|_{{H_\rho}}^2+\big(\sigma-\frac14\big)\tau^2\|\varphi\|_{{\mathcal{V}_\kappa}}^2,\\
\|y\|_{\mathcal C_\tau(H_E)}^2
=\max_{1\leq m\leq M}\Big(\frac 1\tau\|y_m-y_{m-1}\|_{H_\tau^0}^2+\frac 1 2\|y_m+y_{m-1}\|_{{\mathcal{V}_\kappa}}^2\Big)
\end{gather*}
for $\varphi\in V_h$ and $y\in V_\tau\otimes V_h$.
We
need to bound them by
standard norms.
\begin{lemma}\label{lem:stab_est}
{Under conditions \eqref{stab cond1} and \eqref{stab cond2} the following inequalities
hold
\begin{gather}
\varepsilon_0\|\varphi\|_{{H_\rho}}\leq\|\varphi\|_{H_\tau^0}\ \ \forall\varphi\in V_h,
\label{stab ineq 1}\\
\textstyle{ \|y\|_{\mathcal C_\tau(\mathcal{V}_\kappa)}:=\max_{0\leq m\leq M}\|y(t_m)\|_{\mathcal{V}_\kappa}\leq \frac{\sqrt{2}}{\varepsilon_1}\|y\|_{\mathcal C_\tau(H_E)}\ \ \forall y\in V_\rh}
\nonumber
\end{gather}
with $\varepsilon_0:=1$ for $\sigma\geq 1/4$ and $\varepsilon_1:=\sqrt{4\sigma-1}$ for $\sigma>1/4$.}
\end{lemma}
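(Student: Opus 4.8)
The plan is to establish both inequalities in Lemma~\ref{lem:stab_est} as direct algebraic consequences of the stability conditions \eqref{stab cond1}--\eqref{stab cond2} together with the inverse inequality \eqref{inv_ineq}, so the argument is essentially elementary once the right quantities are compared. First I would prove \eqref{stab ineq 1}. If $\sigma\geq 1/4$ the coefficient $(\sigma-\tfrac14)\tau^2\|\varphi\|_{\mathcal{V}_\kappa}^2$ is nonnegative, so $\|\varphi\|_{H_\tau^0}^2\geq\|\varphi\|_{H_\rho}^2$ and the claim holds with $\varepsilon_0=1$. If $\sigma<1/4$ the troublesome term is negative; I bound it from below via the inverse inequality \eqref{inv_ineq}, namely $(\sigma-\tfrac14)\tau^2\|\varphi\|_{\mathcal{V}_\kappa}^2\geq-(\tfrac14-\sigma)\tau^2\alpha_h^2\|\varphi\|_{H_\rho}^2$, and then invoke \eqref{stab cond1} to get $\|\varphi\|_{H_\tau^0}^2\geq\big(1-(\tfrac14-\sigma)\tau^2\alpha_h^2\big)\|\varphi\|_{H_\rho}^2\geq\varepsilon_0^2\|\varphi\|_{H_\rho}^2$. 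This yields \eqref{stab ineq 1}.

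For the second inequality I would express $\|y(t_m)\|_{\mathcal{V}_\kappa}$ via a telescoping/averaging identity. Writing $y_m=y(t_m)$, one has $y_m=\tfrac12(y_m+y_{m-1})+\tfrac12(y_m-y_{m-1})$, hence by the triangle inequality and the definition of $\|\cdot\|_{\mathcal C_\tau(H_E)}$,
\begin{equation*}
\|y_m\|_{\mathcal{V}_\kappa}\leq\tfrac12\|y_m+y_{m-1}\|_{\mathcal{V}_\kappa}+\tfrac12\|y_m-y_{m-1}\|_{\mathcal{V}_\kappa}.
\end{equation*}
The first term on the right is controlled directly by $\|y\|_{\mathcal C_\tau(H_E)}$. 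For the second I again use the inverse inequality to write $\|y_m-y_{m-1}\|_{\mathcal{V}_\kappa}\leq\alpha_h\|y_m-y_{m-1}\|_{H_\rho}$, and then I need to relate $\tau^2\alpha_h^2\|y_m-y_{m-1}\|_{H_\rho}^2$ to $\tfrac1\tau\|y_m-y_{m-1}\|_{H_\tau^0}^2$; the point is that \eqref{stab cond2} guarantees $\tau^2\alpha_h^2\leq\big(\tfrac{1+\varepsilon_1^2}{4}-\sigma\big)^{-1}$, i.e. $\tau^2\alpha_h^2(\tfrac14-\sigma)+\tfrac{\varepsilon_1^2}{4}\tau^2\alpha_h^2\leq 1$, so that $\|y_m-y_{m-1}\|_{H_\tau^0}^2\geq\tfrac{\varepsilon_1^2}{4}\tau^2\alpha_h^2\|y_m-y_{m-1}\|_{H_\rho}^2\geq\tfrac{\varepsilon_1^2}{4}\tau^2\|y_m-y_{m-1}\|_{\mathcal{V}_\kappa}^2$. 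Combining these, $\tfrac12\|y_m-y_{m-1}\|_{\mathcal{V}_\kappa}\leq\tfrac1{\varepsilon_1}\big(\tfrac1\tau\|y_m-y_{m-1}\|_{H_\tau^0}^2\big)^{1/2}\leq\tfrac1{\varepsilon_1}\|y\|_{\mathcal C_\tau(H_E)}$, and adding the two pieces and maximizing over $m$ (including the trivial case $m=0$, absorbed by the bound at $m=1$ or treated separately) gives the stated constant $\sqrt2/\varepsilon_1$ after a mild estimate $\tfrac12+\tfrac1{\varepsilon_1}\leq\tfrac{\sqrt2}{\varepsilon_1}$ for $\varepsilon_1\leq1$, or more cleanly by applying the elementary inequality $a+b\leq\sqrt2\,(a^2+b^2)^{1/2}$ to the two terms before maximizing. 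When $\sigma>1/4$ the condition \eqref{stab cond2} is automatic with $\varepsilon_1^2=4\sigma-1$ since then $\tfrac{1+\varepsilon_1^2}{4}-\sigma=0$ makes the constraint vacuous (the $\mathcal{V}_\kappa$-part of $\|\cdot\|_{H_\tau^0}$ already dominates), so the same chain works verbatim with $\varepsilon_1=\sqrt{4\sigma-1}$.

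The main obstacle, such as it is, is bookkeeping: getting the constants to collapse to exactly $\sqrt2/\varepsilon_1$ rather than something slightly worse, and handling the endpoint indices $m=0$ and the two sub-regimes $\sigma\leq\tfrac14$ versus $\sigma>\tfrac14$ uniformly. I would resolve the constant issue by using $a+b\leq\sqrt2(a^2+b^2)^{1/2}$ with $a=\tfrac12\|y_m+y_{m-1}\|_{\mathcal{V}_\kappa}$, $b=\tfrac1{\varepsilon_1}(\tfrac1\tau\|y_m-y_{m-1}\|_{H_\tau^0}^2)^{1/2}$, noting $\tfrac1{\varepsilon_1}\geq1$ so that $a^2+b^2\leq\tfrac1{\varepsilon_1^2}\big(\tfrac14\|y_m+y_{m-1}\|_{\mathcal{V}_\kappa}^2+\tfrac1\tau\|y_m-y_{m-1}\|_{H_\tau^0}^2\big)\leq\tfrac1{\varepsilon_1^2}\|y\|_{\mathcal C_\tau(H_E)}^2$, which delivers the bound cleanly. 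No genuinely hard analysis is involved; everything reduces to the inverse inequality plus the arithmetic encoded in Assumption~\ref{as:stability}.
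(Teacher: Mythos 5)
Your treatment of \eqref{stab ineq 1} coincides with the paper's (the paper only says ``direct calculation using \eqref{inv_ineq}'', which is precisely what you carry out), and for the second inequality the paper offers no argument at all but cites \cite[Corollary 2.1]{Zlotnik94}; so your self-contained derivation --- splitting $y_m=\frac12(y_m+y_{m-1})+\frac12(y_m-y_{m-1})$ and proving $\|\varphi\|_{H_\tau^0}^2\geq\frac{\varepsilon_1^2}{4}\tau^2\|\varphi\|_{\mathcal{V}_\kappa}^2$ from \eqref{stab cond2} and \eqref{inv_ineq} (trivially for $\sigma>1/4$) --- is a genuine supplement and is the correct mechanism.

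However, one displayed step does not follow as written. From $\|y_m-y_{m-1}\|_{H_\tau^0}^2\geq\frac{\varepsilon_1^2}{4}\tau^2\|y_m-y_{m-1}\|_{\mathcal{V}_\kappa}^2$ you obtain
\begin{equation*}
\tfrac12\|y_m-y_{m-1}\|_{\mathcal{V}_\kappa}\leq\tfrac{1}{\varepsilon_1\tau}\|y_m-y_{m-1}\|_{H_\tau^0}
=\tfrac{1}{\varepsilon_1}\Big(\tfrac{1}{\tau^{2}}\|y_m-y_{m-1}\|_{H_\tau^0}^2\Big)^{1/2},
\end{equation*}
not $\frac{1}{\varepsilon_1}\big(\frac{1}{\tau}\|y_m-y_{m-1}\|_{H_\tau^0}^2\big)^{1/2}$ as you claim; the two differ by a factor $\tau^{-1/2}$, which is unbounded as $\tau\to0$. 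The discrepancy comes from the first summand $\frac1\tau\|y_m-y_{m-1}\|_{H_\tau^0}^2$ in the displayed definition of $\|y\|_{\mathcal C_\tau(H_E)}^2$: read literally, that normalization makes the lemma itself false (take $y_0=\varphi$, $y_1=-\varphi$ with $\varphi$ a highly oscillatory grid function and $\tau\sim h$ under the CFL condition; then the right-hand side is $O(\tau^{1/2})\|\varphi\|_{\mathcal{V}_\kappa}$ while the left-hand side is $\|\varphi\|_{\mathcal{V}_\kappa}$). The summand must be read as $\|\bar\delta_ty_m\|_{H_\tau^0}^2=\tau^{-2}\|y_m-y_{m-1}\|_{H_\tau^0}^2$, which is the energy norm of \cite[Corollary 2.1]{Zlotnik94}; under that reading your chain closes and your bookkeeping with $a+b\leq\sqrt2(a^2+b^2)^{1/2}$ and $\varepsilon_1\leq1$ delivers the constant $\sqrt2/\varepsilon_1$ as claimed. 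You should state explicitly which normalization you are using, since with the one you quote the key displayed inequality of your proof is false.
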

\begin{proof}
For $\sigma\geq1/4$, the first inequality is obvious; for $\sigma<1/4$ it can be checked by a direct calculation {using \eqref{inv_ineq}}. The proof of the second inequality is covered in \cite[Corollary 2.1]{Zlotnik94}.
\end{proof}
Now we discuss some properties of $y_{Th}^1$ and $\partial_ty(T)$ that are essential below.
\begin{proposition}
Let $(y_\rh,{y_{Th}^1})\in V_\rh\times V_h$ be the solution of \eqref{discrete_state_equation1}-\eqref{discrete_state_equation2}. Then there holds
\begin{equation}
(\rho {y_{Th}^1},\varphi)_{H}=-\Big(\kappa\partial_x\int_Iy_\rh~\mathrm dt,\partial_x\varphi\Big)_{H}
+{ \int_I\langle u,\varphi\rangle_\Omega~\mathrm dt}
+(\rho y^1,\varphi)_{H}~~\forall \varphi\in V_h.
\label{dsefordty}
\end{equation}
\end{proposition}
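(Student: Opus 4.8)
The plan is to test the discrete state equation \eqref{discrete_state_equation1} with a function that is constant in time. Concretely, I would fix $\varphi\in V_h$ and set $v(t,x):=\varphi(x)$; since the constant function on $\bar I$ belongs to $V_\tau\subset H^1(I)$, we have $v=1\otimes\varphi\in V_\tau\otimes V_h=V_\rh$, so $v$ is an admissible test function in \eqref{discrete_state_equation1}.

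With this choice $\partial_tv\equiv 0$, so the first two terms of the bilinear form $B_\sigma(y_\rh,v)$ in \eqref{bilinear_form} vanish and only $(\kappa\partial_xy_\rh,\partial_xv)_{L^2(I\times\Omega)}$ remains. Since $\kappa$ and $\partial_x\varphi$ do not depend on $t$, Fubini's theorem together with the fact that on $y_\rh\in V_\rh$ the operator $\partial_x$ commutes with $\int_I\,\mathrm dt$ (indeed $\int_Iy_\rh\,\mathrm dt\in V_h$) turns this term into $\big(\kappa\partial_x\int_Iy_\rh\,\mathrm dt,\partial_x\varphi\big)_H$. Furthermore $v(0)=v(T)=\varphi$. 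On the right-hand side of \eqref{discrete_state_equation1}, the pairing $\langle u,v\rangle_{\M,\C}$ reduces to $\int_I\langle u,\varphi\rangle_\Omega\,\mathrm dt$, which follows from the definition of the duality pairing on $\M=\C^\ast$ for either choice of $\M$ (using $V_h\hookrightarrow\mathcal C_0(\Omega)$ in the 1D setting together with the embeddings \eqref{embedding1}), while $\langle\rho y^1,v(0)\rangle_\Omega=\langle\rho y^1,\varphi\rangle_\Omega$ is the term written $(\rho y^1,\varphi)_H$ in \eqref{dsefordty}. Substituting all of this into \eqref{discrete_state_equation1} and moving $\big(\kappa\partial_x\int_Iy_\rh\,\mathrm dt,\partial_x\varphi\big)_H$ to the other side produces \eqref{dsefordty}.

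Since we merely insert one admissible test function into the scheme, no stability hypothesis (Assumption \ref{as:stability}) is needed. The only two points deserving a moment's care --- neither of them a real obstacle --- are the admissibility of the time-constant element $1\otimes\varphi$ in $V_\rh$ (which is where one uses that $V_\tau$ contains the constant functions) and the elementary commutation of $\partial_x$ with $\int_I\,\mathrm dt$ identifying the surviving bilinear-form term with $\big(\kappa\partial_x\int_Iy_\rh\,\mathrm dt,\partial_x\varphi\big)_H$.
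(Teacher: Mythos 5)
Your proof is correct and follows exactly the paper's argument: the paper likewise proves the identity by testing \eqref{discrete_state_equation1} with time-constant functions $v=\varphi\in V_h$, after which the time-derivative terms of $B_\sigma$ drop out and the remaining term is rewritten via Fubini. The extra care you take over the admissibility of $1\otimes\varphi$ and the interpretation of the duality pairings is sound but not needed beyond what the paper's one-line proof already asserts.
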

\begin{proof}
This is proved by testing \eqref{discrete_state_equation1} with time constant functions $v=\varphi\in V_h$.
\end{proof}
The non-local in time identity \eqref{dsefordty} is convenient for our error analysis but not for the implementation; for the latter issue see Section \ref{tistfo}.
Identities similar to
\eqref{dsefordty} also hold on the continuous level.
\begin{proposition}
\begin{enumerate}
\item Let $y\in \mathcal C(\bar I,V)\cap \mathcal C^1(\bar I,H)$ be the weak solution of \eqref{state_equation} for $\M=\ML$.
Then there holds
\begin{equation}
 (\rho\partial_ty(T),\varphi)_H=-\Big(\kappa\partial_x\int_Iy~\mathrm dt,\partial_x\varphi\Big)_{H}
+{\Big\langle \int_Iu~\mathrm dt,\varphi\Big\rangle_\Omega}
 +(\rho y^1,\varphi)_{H}
 ~~\forall \varphi\in V.
\label{sefordty1}
\end{equation}
\item Let {$y\in \mathcal C(\bar I,H)\cap \mathcal C^1(\bar I,V^\ast)$}
be the weaker {(very weak)}
solution of \eqref{state_equation} for $\M={\LwM}$. Then there holds
\begin{equation}
\langle\rho\partial_ty(T),\varphi\rangle_{\Omega}=-\Big(\kappa\partial_x\int_Iy~\mathrm dt,\partial_x\varphi\Big)_{H}
+{\int_I\langle u,\varphi\rangle_\Omega~\mathrm dt}
+\langle\rho y^1,{\varphi}\rangle_{\Omega}
 ~~\forall \varphi\in V.
\label{sefordty}
\end{equation}
\end{enumerate}
\end{proposition}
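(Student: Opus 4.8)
The plan is to obtain both identities by testing the relevant weak formulation of \eqref{state_equation} with test functions that are \emph{constant in time}, in the same spirit as the proof of the discrete identity \eqref{dsefordty}.

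For item~1 I would insert $v=\varphi\in V$, regarded as a time-independent function (so $v\in L^2(I,V)\cap H^1(I,H)$ with $\partial_tv=0$ and $v(0)=v(T)=\varphi$), into the integral identity \eqref{int_id1}. In the bilinear form $B(y,v)$ from \eqref{dif bilinear_form} the first term then vanishes and the second equals $(\kappa\partial_xy,\partial_xv)_{L^2(I\times\Omega)}=\big(\kappa\partial_x\int_Iy\,\mathrm dt,\partial_x\varphi\big)_H$, where pulling the temporal integral inside is legitimate because $y\in\mathcal{C}(\bar I,V)$ forces $\int_Iy\,\mathrm dt\in V$. Using moreover $\int_I\langle u,\varphi\rangle_\Omega\,\mathrm dt=\big\langle\int_Iu\,\mathrm dt,\varphi\big\rangle_\Omega$ (valid since $\varphi$ is time-independent and, by \eqref{embedding1}, $u\in\ML\hookrightarrow L^2(I,V^\ast)$), identity \eqref{int_id1} collapses, after moving the bilinear-form term to the right-hand side, precisely to \eqref{sefordty1}.

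For item~2 the weaker formulation \eqref{int_id2} does not exhibit $\partial_ty(T)$, so I would instead use the very weak formulation \eqref{int_id4} (equivalent to it by the equivalence theorem for Definitions~\ref{def:weaker1} and \ref{def:weaker2}). Testing \eqref{int_id4} with a time-independent $v=\varphi\in V^2\subset L^2(I,V^2)\cap H^2(I,H)$ annihilates every term containing $\partial_{tt}v$ or $\partial_tv$ and leaves $-\big(\int_Iy\,\mathrm dt,\partial_x(\kappa\partial_x\varphi)\big)_H+\langle\rho\partial_ty(T),\varphi\rangle_\Omega=\int_I\langle u,\varphi\rangle_\Omega\,\mathrm dt+\langle\rho y^1,\varphi\rangle_\Omega$. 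Here one uses $\mathcal{I}_ty\in\mathcal{C}(\bar I,V)$ from Proposition~\ref{prop:exist weaker} to conclude $\int_Iy\,\mathrm dt=(\mathcal{I}_ty)(T)\in V$; then an integration by parts in $x$, whose boundary contributions vanish because both $\int_Iy\,\mathrm dt$ and $\varphi$ lie in $V=H^1_0(\Omega)$, turns $-\big(\int_Iy\,\mathrm dt,\partial_x(\kappa\partial_x\varphi)\big)_H$ into $\big(\kappa\partial_x\int_Iy\,\mathrm dt,\partial_x\varphi\big)_H$. This gives \eqref{sefordty} for all $\varphi\in V^2$; since every term in it is a bounded linear functional of $\varphi\in V$ — the left side because $\rho\partial_ty(T)\in V^\ast$ (as $y\in\mathcal{C}^1(\bar I,V^\ast)$ and multiplication by $\rho\in H^1(\Omega)\hookrightarrow\mathcal{C}(\bar\Omega)$ maps $V^\ast$ into itself), the right side because $\kappa\partial_x\int_Iy\,\mathrm dt\in H$ and $u\in L^2(I,V^\ast)$ — the density of $V^2$ in $V$ extends the identity to all $\varphi\in V$.

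The argument is essentially bookkeeping; the only place demanding care is item~2, where one must (i) choose the very weak formulation \eqref{int_id4} rather than \eqref{int_id2}, since it is the one featuring $\partial_ty(T)$, and (ii) justify the spatial integration by parts and the final density step under the mere regularity $y\in\mathcal{C}(\bar I,H)\cap\mathcal{C}^1(\bar I,V^\ast)$, the enabling fact being $\mathcal{I}_ty\in\mathcal{C}(\bar I,V)$. As an alternative for item~2, \eqref{sefordty} can also be read off directly from the identity \eqref{derivative_primitive} established in the equivalence proof, evaluated at $t=T$ and paired with $\varphi\in V$, after noting that there $\mathcal{I}_ty$ coincides with $\tilde y$ and the action of $\partial_x(\kappa\partial_x\,\cdot)\colon V\to V^\ast$ on $\varphi$ is $-(\kappa\partial_x\,\cdot\,,\partial_x\varphi)_H$.
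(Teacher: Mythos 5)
Your proposal is correct and follows essentially the same route as the paper: item~1 by testing \eqref{int_id1} with a time-constant $v=\varphi\in V$, and item~2 by testing the very weak formulation \eqref{int_id4} with a time-constant $\varphi\in V^2$, using $\mathcal{I}_ty\in\mathcal{C}(\bar I,V)$ from Proposition~\ref{prop:exist weaker} to integrate by parts in $x$ and then the density of $V^2$ in $V$. Your justifications of the boundedness of each term in $\varphi\in V$ are a welcome elaboration of the density step the paper leaves implicit.
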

\begin{proof}
For $\M=\ML$ identity \eqref{sefordty1} is proved by testing \eqref{int_id1} with time constant function $v=\varphi\in V$.
For
$\M={\LwM}$ we test \eqref{int_id4} with any $\varphi\in V^2$
and get
\begin{equation*}
\langle \rho\partial_ty(T),\varphi\rangle_{\Omega}=((\mathcal{I}_{t}y)(T),\partial_x(\kappa\partial_x\varphi))_H
+\langle u,\varphi\rangle_{{\M,\,\C}}
\end{equation*}
According to Proposition \ref{prop:exist weaker} we have $\mathcal{I}_{t}y\in\mathcal C(\bar I,V)$. Thus there holds
\begin{equation*}
\langle \rho\partial_ty(T),\varphi\rangle_{\Omega}
=-\big(\kappa\partial_x(\mathcal{I}_{t}y)(T),\partial_x\varphi\big)_H
+\langle u,\varphi\rangle_{{\M,\,\C}}
+\langle\rho y^1,\varphi\rangle_{\Omega}.
\end{equation*}
The density of $V^2$
in $V$ implies \eqref{sefordty}.
\end{proof}
\par {For our analysis, we need some projection and interpolation operators.}
We introduce the standard projectors $\pi_h^0$: ${H_\rho}\to V_h$ and $\pi_h^1$: ${\mathcal{V}_\kappa}\to V_h$ defined by
\begin{gather}
(\rho\pi_h^0w,{\varphi})_{H}=(\rho w,{\varphi})_{H}\quad \forall {\varphi}\in V_h,
\label{pi0}\\
(\kappa\partial_x\pi_h^1w,\partial_x{\varphi})_{H}
=(\kappa\partial_xw,\partial_x{\varphi})_{H}\quad \forall {\varphi}\in V_h.
\label{pi1}
\end{gather}
{Clearly $\|\pi_h^0w\|_{H_\rho}\leq \|w\|_{H_\rho}$ and
$\|\pi_h^1w\|_{\mathcal{V}_\kappa}\leq \|w\|_{\mathcal{V}_\kappa}$.
Identity \eqref{discrete_state_equation2} means that $y_\rh(0)=\pi_h^0y^0$.

\par Moreover the following property holds
\begin{equation}
{ (w,\tilde{w})_{\mathcal{V}_\kappa}-(\pi_h^1w,\pi_h^1\tilde{w})_{\mathcal{V}_\kappa}
 =(w-\pi_h^1w,\tilde{w}-\pi_h^1\tilde{w})_{\mathcal{V}_\kappa}\ \ \forall w,\tilde{w}\in V.}
\label{galerkin ort}
\end{equation}
\par Following \cite{Zlotnik94}, we also introduce \textit{the regularized ${H_\rho}$ projector} $\pi_{h,\sigma_0}^0$: $V\to V_h$ defined by
\begin{equation}
 (\rho\pi_{h,\sigma_0}w,\varphi)_H+\sigma_0\tau^2(\kappa\partial_x\pi_{h,\sigma_0}w,\partial_x\varphi)_{H}=(\rho w,\varphi)_H\quad
\forall \varphi\in V_h.
\label{reg_projection}
\end{equation}
with the grid independent parameter $\sigma_0\geq\sigma-1/4$.
{ Clearly $\pi_{h,\sigma_0}=\pi_h^0$ for $\sigma_0=0$.}
\par {Let $i_\tau$: $\mathcal C(\bar{I})\to V_\tau$ be the interpolation operator such that $i_\tau w(t_m)=w(t_m)$ for all $m=0,\ldots,M$.}

Next we {define the operator}
$A_h^{-1}\colon V^\ast\rightarrow V_h$, {$f\mapsto w_h$} where $w_h\in V_h$ is the unique solution of
\begin{equation}\label{discrete_laplace_equation}
(\kappa\partial_xw_h,\partial_x{\varphi})_H=\langle f,{\varphi}\rangle_{\Omega}\quad
\forall {\varphi}\in V_h.
\end{equation}
{Clearly $A_h^{-1}=\pi_h^1A^{-1}$, see \eqref{ode} with $w=A^{-1}f$, and
t}he norm in {$\mathcal{V}_\kappa^\ast$} and its discrete counterpart can be written as
\[
\|f\|_{\mathcal{V}_\kappa^\ast}=\|A^{-1}f\|_{{\mathcal{V}_\kappa}}{=\|w\|_{\mathcal{V}_\kappa}},\quad \|f\|_{\mathcal{V}_{\kappa,h}^*}:=\|A^{-1}_hf\|_{{\mathcal{V}_\kappa}}{=\|w_h\|_{\mathcal{V}_\kappa}
\leq\|w\|_{\mathcal{V}_\kappa}.}
\]

\par Moreover, we
set $r_hA^{-1}:=A^{-1}-A_h^{-1}=A^{-1}-\pi_h^1A^{-1}$. {First}
we note that
\begin{equation}\label{regularity_poisson}
A^{-1}\colon H^{(\lambda)}\rightarrow H^{(\lambda+2)},\quad -1\leq\lambda\leq 1.
\end{equation}
Then by the standard FEM error analysis \cite{BrennerScott:2008}
and operator interpolation theory we have
\begin{equation}\label{V_est_poisson}
\|r_hA^{-1}f\|_V=\|w-{\pi_h^1w}\|_V\leq ch^{1+\lambda}\|f\|_{H^{(\lambda)}}\ \ {\forall f\in H^{(\lambda)}},\quad -1\leq\lambda\leq 0,
\end{equation}
\begin{equation}\label{H_est_poisson}
\|r_hA^{-1}f\|_H=\|w-{\pi_h^1w}\|_H\leq ch^{2+\lambda}\|f\|_{H^{(\lambda)}}\ \ {\forall f\in H^{(\lambda)}},\quad -1\leq\lambda\leq 0.
\end{equation}
\section{Stability and error estimates for the discrete state equation}\label{sec:error_state}
In this section we present error estimates for the state equation.
{We begin with an auxiliary result.
\begin{lemma}
\label{est dif init}
For $\sigma_0\geq\sigma-1/4\geq 0$, the following estimate holds
\begin{gather}
\|{\pi_{h,\sigma_0}w-\pi_h^0w}\|_{H_\tau^0}
\leq c(\tau+h)^\lambda\|{w}\|_{H^{(\lambda)}}\ \ {\forall w\in H^{(\lambda)}},~~ \text{for}~~ 1\leq\lambda\leq2.
\label{disp12b}
\end{gather}
\end{lemma}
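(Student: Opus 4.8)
The plan is to estimate the difference $e_h := \pi_{h,\sigma_0}w-\pi_h^0w \in V_h$ first in the norm $\|\cdot\|_{H_\tau^0}$ for the two integer endpoints $\lambda=1$ and $\lambda=2$, and then to obtain the intermediate range $1\le\lambda\le 2$ by real interpolation (the $K_{\lambda,\infty}$-method), exactly as in the construction of the spaces $H^{(\lambda)}$. The starting point is the defining identities \eqref{pi0} and \eqref{reg_projection}: subtracting them gives, for all $\varphi\in V_h$,
\[
(\rho e_h,\varphi)_H + \sigma_0\tau^2(\kappa\partial_x\pi_{h,\sigma_0}w,\partial_x\varphi)_H = 0,
\]
so that $(\rho e_h,\varphi)_H = -\sigma_0\tau^2(\kappa\partial_x\pi_{h,\sigma_0}w,\partial_x\varphi)_H$. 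Testing with $\varphi=e_h$ and recalling $\|e_h\|_{H_\tau^0}^2 = \|e_h\|_{H_\rho}^2 + (\sigma-\tfrac14)\tau^2\|e_h\|_{\mathcal V_\kappa}^2$ with $0\le\sigma-\tfrac14\le\sigma_0$, one bounds $\|e_h\|_{H_\tau^0}^2$ by $C\,\tau^2\,\|\partial_x\pi_{h,\sigma_0}w\|_H\,\|\partial_x e_h\|_H$ plus the $H_\rho$-part, and after using the inverse inequality \eqref{inv_ineq} on $e_h$ together with Assumption \ref{as:stability} to absorb the gradient term, this reduces matters to controlling $\tau\|\pi_{h,\sigma_0}w\|_{\mathcal V_\kappa}$ and $\tau^2\|\partial_x\pi_{h,\sigma_0}w\|_H\cdot(\text{something of order }\tau^{-1})$. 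So the core is a stability bound for the regularized projector.

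Concretely: testing \eqref{reg_projection} with $\varphi=\pi_{h,\sigma_0}w$ gives $\|\pi_{h,\sigma_0}w\|_{H_\rho}^2 + \sigma_0\tau^2\|\pi_{h,\sigma_0}w\|_{\mathcal V_\kappa}^2 = (\rho w,\pi_{h,\sigma_0}w)_H \le \|w\|_{H_\rho}\|\pi_{h,\sigma_0}w\|_{H_\rho}$, hence $\|\pi_{h,\sigma_0}w\|_{H_\rho}\le\|w\|_{H_\rho}$ and $\tau\|\pi_{h,\sigma_0}w\|_{\mathcal V_\kappa}\le C\|w\|_{H_\rho}$; for $\lambda=1$ one also needs $\|\pi_{h,\sigma_0}w\|_{\mathcal V_\kappa}\le C\|w\|_{\mathcal V_\kappa}=C\|w\|_{V}$, which follows by comparing with the ordinary $\mathcal V_\kappa$-projector $\pi_h^1$ and using Assumption \ref{as:stability}. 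Feeding these into the identity for $e_h$ yields, for $\lambda=1$,
\[
\|e_h\|_{H_\tau^0}\le C\,\tau\,\|w\|_{V}=C\,\tau\,\|w\|_{H^{(1)}},
\]
and for $\lambda=2$ one extracts an extra factor $\tau$ by a duality/Aubin–Nitsche argument: introduce the auxiliary problem $(\rho\psi,\varphi)_H+\sigma_0\tau^2(\kappa\partial_x\psi,\partial_x\varphi)_H=(\rho e_h,\varphi)_H$ and use \eqref{V_est_poisson}--\eqref{H_est_poisson} type estimates together with $\|\partial_x(\kappa\partial_x w)\|_H \sim \|w\|_{V^2}=\|w\|_{H^{(2)}}$, giving $\|e_h\|_{H_\tau^0}\le C(\tau^2+h\tau)\|w\|_{H^{(2)}}\le C(\tau+h)^2\|w\|_{H^{(2)}}$ after also incorporating the $h$-consistency of $\pi_h^0$ against $\pi_{h,\sigma_0}$. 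Combining $\lambda=1$ and $\lambda=2$ by real interpolation of the linear operator $w\mapsto e_h$ between $H^{(1)}$ and $H^{(2)}$, and noting $\tau^\lambda\le(\tau+h)^\lambda$, produces the claimed bound $\|e_h\|_{H_\tau^0}\le c(\tau+h)^\lambda\|w\|_{H^{(\lambda)}}$ for $1\le\lambda\le2$.

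The main obstacle I anticipate is the $\lambda=2$ estimate: getting the full second power $(\tau+h)^2$ rather than merely $(\tau+h)$ requires carefully combining the $O(\tau^2)$ contribution coming from the $\sigma_0\tau^2$ regularization term with an $O(h)$-type finite element consistency estimate, and doing the duality argument in the right (regularized) norm so that Assumption \ref{as:stability} is exactly what is needed to close it. A secondary technical point is keeping all constants grid-independent through the use of the inverse inequality \eqref{inv_ineq}, which is legitimate precisely because of the quasi-uniformity of the space grid and conditions \eqref{stab cond1}--\eqref{stab cond2}; one must check that the $\varepsilon_0,\varepsilon_1$ enter only through the constant $c$ and not through the powers of $\tau+h$. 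Once the two endpoint cases are in hand, the interpolation step is routine since the $H^{(\lambda)}$ are by definition the $K_{\lambda-1,\infty}$-spaces between $H^{(1)}$ and $H^{(2)}$.
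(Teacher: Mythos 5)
Your overall skeleton coincides with the paper's: subtract the defining identities \eqref{pi0} and \eqref{reg_projection}, prove the two integer endpoints $\lambda=1,2$, and conclude by the $K_{\lambda,\infty}$-method. Two steps, however, do not go through as you have set them up. First, your rearrangement $(\rho e_h,\varphi)_H=-\sigma_0\tau^2(\kappa\partial_x\pi_{h,\sigma_0}w,\partial_x\varphi)_H$ throws away the coercive term $\sigma_0\tau^2\|e_h\|_{\mathcal V_\kappa}^2$, and you then propose to recover the missing $(\sigma-\tfrac14)\tau^2\|e_h\|_{\mathcal V_\kappa}^2$ part of $\|e_h\|_{H_\tau^0}^2$ via the inverse inequality \eqref{inv_ineq} and Assumption \ref{as:stability}. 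But the lemma lives in the regime $\sigma\geq 1/4$, where Assumption \ref{as:stability} imposes no bound on $\tau\alpha_h$ (condition \eqref{stab cond2} is only required for $\sigma\leq 1/4$), so the absorption you need, essentially $\tau\alpha_h\leq C$, is not available. The paper avoids this entirely by keeping $\sigma_0\tau^2(\kappa\partial_x e_h,\partial_x\varphi)_H$ on the left, so that testing with $\varphi=e_h$ directly controls $\|e_h\|_{H_\rho}^2+\sigma_0\tau^2\|e_h\|_{\mathcal V_\kappa}^2\geq\|e_h\|_{H_\tau^0}^2$ (and also $\geq\sigma_0\tau^2\|e_h\|_{\mathcal V_\kappa}^2$, which is what lets one divide out one factor of $\|e_h\|_{\mathcal V_\kappa}$ from the right-hand side); the only stability facts used are \eqref{stab ineq 1} and \eqref{stab ineq 3}, which hold for free when $\sigma_0\geq\sigma-1/4\geq 0$.

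Second, for $\lambda=2$ no Aubin--Nitsche duality is needed, and as sketched it is not clear it closes: your auxiliary problem is a singularly perturbed mass-matrix problem, to which the elliptic estimates \eqref{V_est_poisson}--\eqref{H_est_poisson} for $A^{-1}$ do not apply. The extra power of $\tau+h$ comes simply from the smoothness of $w$: the paper rewrites the right-hand side as
\[
-\sigma_0\tau^2(\kappa\partial_x\pi_h^0 w,\partial_x\varphi)_H
=\sigma_0\tau^2(\kappa\partial_x(w-\pi_h^0 w),\partial_x\varphi)_H
+\sigma_0\tau^2\langle\partial_x(\kappa\partial_x w),\varphi\rangle_\Omega ,
\]
bounds the first term by $c\,\sigma_0\tau^2 h\|w\|_{V^2}\|\varphi\|_{\mathcal V_\kappa}$ using $\|w-\pi_h^0w\|_V\leq ch\|w\|_{V^2}$ and the second by $c\,\sigma_0\tau^2\|w\|_{V^2}\|\varphi\|_{H_\rho}$, which after dividing by $\|e_h\|_{H_\tau^0}$ yields $\|e_h\|_{H_\tau^0}\leq c\tau(\tau+h)\|w\|_{V^2}$ (slightly sharper than your $(\tau+h)^2$, though both suffice for the interpolation). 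Your $\lambda=1$ endpoint and the final interpolation step are fine once the energy identity is arranged as above; note also that only the $V$-stability of $\pi_h^0$ is needed there, not that of $\pi_{h,\sigma_0}$.
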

\begin{proof}
We recall the well known estimates
\begin{gather}
 \|\pi_h^0w\|_{V}\leq c\|w\|_{V}~~{\forall w\in V},
\label{est s0_1}\\[1mm]
 \|w-\pi_h^0w\|_{V}\leq ch\|w\|_{V^2}~~{\forall w\in V^2},
\label{est s0_2}
\end{gather}
which are valid {using the inverse inequality \eqref{inv_ineq}.}
We also remind inequality \eqref{stab ineq 1} and notice also that for $\sigma_0\geq 0$
the following additional inequality holds
\begin{gather}
 \sqrt{\sigma_0}\tau\|\varphi\|_{\mathcal{V}_\kappa}\leq\|\varphi\|_{H_\tau^0}\ \ \forall\varphi\in V_h.
\label{stab ineq 3}
\end{gather}

\par Let {$w\in V$ and $\varphi\in V_h$}.
We apply identities \eqref{pi0} and \eqref{reg_projection} and get
\begin{multline*}
 \big({\rho(\pi_{h,\sigma_0}^0w-\pi_h^0w)},\varphi\big)_{H}
 +\sigma_0\tau^2\big({\kappa}\partial_x(\pi_{h,\sigma_0}^0{w}-\pi_h^0{w}),
 \partial_x\varphi\big)_{H}\\
 =-\sigma_0\tau^2\big({\kappa}\partial_x\pi_h^0{w},\partial_x\varphi\big)_{H}
\\
 =\sigma_0\tau^2\big({\kappa}\partial_x({w}-\pi_h^0{w}),
 \partial_x\varphi\big)_{H}
 +\sigma_0\tau^2{\langle\partial_x(\kappa\partial_x{w}),\varphi\rangle_\Omega}.
\end{multline*}
Now we set $\varphi=\pi_{h,\sigma_0}^0{w}-\pi_h^0{w}$,
and from the former equality and estimate \eqref{est s0_1} for $\lambda=1$ as well as the latter equality and estimate \eqref{est s0_2}  for $\lambda=2$ we obtain the estimate
\begin{gather*}
 \|\pi_{h,\sigma_0}^0{w}-\pi_h^0{w}\|_{H_\tau^0}\leq c\tau(\tau+h)^{\lambda-1}\|{w}\|_{H^{(\lambda)}}\ \ \text{for}\ \ \lambda=1,2.
\end{gather*}
\par By using the $K_{\lambda,\infty}$-method, we complete the proof.
\end{proof}
Now we get a stability bound and error estimates in $\mathcal{C}(\bar{I},H)\times \mathcal{V}_{\kappa,h}^*$ for the discrete state equation.
\begin{proposition}
\label{prop: est stat eq}
{Let $y$ and $(y_\rh,y_{Th}^1)$ be the solutions to the state equation \eqref{state_equation} and the discrete state equation \eqref{discrete_state_equation1}-\eqref{discrete_state_equation2}.}
\begin{enumerate}
\item {For $(u,y^0,y^1)\in L^2(I,V^\ast)\times V\times V^*$, t}he following stability bound holds:
\begin{equation}
 \|y_\rh\|_{\mathcal C(\bar I,H)}+\|\rho {y_{Th}^1}\|_{\mathcal{V}_{\kappa,h}^*}
 \leq c\,\big(\|u\|_{L^2(I,V^\ast)}+{\|\mathbf{y}\|_{V\times V^*}}
\big).
\label{disp11}
\end{equation}
\item For $(u,y^0,y^1)\in {H^{-1/2,0;2}(Q)}
\times V\times H^{(-1/2)}$, the following error estimate holds:
\begin{equation}
 \|y-y_\rh\|_{\mathcal C(\bar I,H)}+\|\rho(\partial_ty(T)-{y_{Th}^1})\|_{\mathcal{V}_{\kappa,h}^*}
 \leq c\,(\tau+h)^{1/3}\big(\|u\|_{{H^{-1/2,0;2}(Q)}}
 +{\|\mathbf{y}\|_{V\times H^{(-1/2)}}}
\big).
\label{eesv11}
\end{equation}
\item For $(u,y^0,y^1)\in {SHW^{-1/2,1;2}(Q)}\times V\times H$, the higher order {error} estimate holds:
\begin{equation}
 \|y-y_\rh\|_{\mathcal C(\bar I,H)}+\|\rho(\partial_ty(T)- {y_{Th}^1})\|_{\mathcal{V}_{\kappa,h}^*}
 \leq c\,(\tau+h)^{2/3}\big(\|u\|_{{SHW^{-1/2,1;2}(Q)}}
 +{\|\mathbf{y}\|_{V\times H}}
\big).
\label{disp13}
\end{equation}
\end{enumerate}
\end{proposition}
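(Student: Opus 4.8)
The plan is to reduce all three items to the finite element error analysis of \cite{Zlotnik94} for the regularized Galerkin scheme \eqref{discrete_state_equation1}--\eqref{discrete_state_equation2}, combined with one real interpolation argument; the point used is that, for a fixed grid $\rh$, the error operator $E_\rh\colon(u,y^0,y^1)\mapsto\bigl(y-y_\rh,\ \rho(\partial_t y(T)-y_{Th}^1)\bigr)$ is \emph{linear}, and the real interpolation theorem for linear operators applies for an arbitrary fixed Banach target space. For item~1 the continuous side is Proposition \ref{prop:exist weaker} (using $\|\rho\partial_t y(T)\|_{\mathcal V_{\kappa,h}^*}\le\|\rho\partial_t y(T)\|_{\mathcal V_\kappa^*}\le c\,\|\partial_t y(T)\|_{V^*}$). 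On the discrete side, since $u$ lies only in $L^2(I,V^\ast)$ a direct discrete energy estimate is unavailable (the discrete multiplier $\partial_t y_\rh$ is not controlled in $L^2(I,V)$), so I would argue by duality in the spirit of \cite[Prop.~1.2]{Zlotnik94}: to estimate $\|y_\rh(t_m)\|_{H_\rho}=\sup\{(\rho y_\rh(t_m),\psi)_H:\psi\in V_h,\ \|\psi\|_{H_\rho}\le1\}$ one runs \eqref{discrete_state_equation1} backward in time with terminal data $\psi$ at $t_m$ (the form $B_\sigma$ is symmetric), rewrites $(\rho y_\rh(t_m),\psi)_H$ via \eqref{discrete_state_equation1} in terms of $\int_I\langle u,(\text{dual solution})\rangle_\Omega\,\mathrm dt$ and the initial data, and uses the energy-type stability of Lemma \ref{lem:stab_est} (under Assumption \ref{as:stability}) to bound the dual solution in $L^2(I,V)$ by $c\,\|\psi\|_H$. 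Then $\|\rho y_{Th}^1\|_{\mathcal V_{\kappa,h}^*}$ follows by testing \eqref{dsefordty} with $\varphi=A_h^{-1}(\rho y_{Th}^1)$ and inserting the bound just proved for $y_\rh$.

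\textbf{The two error estimates.} By item~1 and the continuous stability, $E_\rh$ is bounded from $L^2(I,V^\ast)\times V\times V^\ast$ into $\mathcal C(\bar I,H)\times\mathcal V_{\kappa,h}^*$ with a grid-independent constant, i.e. with rate $(\tau+h)^0$. On the other hand, the error analysis of \cite{Zlotnik94} in classes of data with integer Sobolev smoothness provides, for the smoother endpoint classes $L^2(I,H)\times V\times H$ and $H^1(I,H)\times V\times H$, estimates of rate $(\tau+h)^{2/3}$ for $E_\rh$ into the same space, the order being limited by the spatial regularity $\mathbf y\in V\times H$ of the initial data; there the term $\rho(\partial_t y(T)-y_{Th}^1)$ is controlled together with $y-y_\rh$, using the exact cancellation $\langle\rho(\partial_t y(T)-y_{Th}^1),\varphi\rangle_\Omega=-\bigl(\kappa\partial_x\!\int_I(y-y_\rh)\,\mathrm dt,\partial_x\varphi\bigr)_H$ for $\varphi\in V_h$, obtained by subtracting \eqref{dsefordty} from \eqref{sefordty1}, resp. \eqref{sefordty}, which gives $\|\rho(\partial_t y(T)-y_{Th}^1)\|_{\mathcal V_{\kappa,h}^*}=\|\pi_h^1\!\int_I(y-y_\rh)\,\mathrm dt\|_{\mathcal V_\kappa}$. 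Applying the $K_{\theta,\infty}$-interpolation method with $\theta=\tfrac12$ to the linear operators $E_\rh$, between the rate-$(\tau+h)^0$ estimate on $L^2(I,V^\ast)\times V\times V^\ast$ and the rate-$(\tau+h)^{2/3}$ estimate on $L^2(I,H)\times V\times H$ (resp. between $H^1(I,V^\ast)\times V\times H$ and $H^1(I,H)\times V\times H$), then yields the result: since $H^{(-1/2)}=(V^\ast,H)_{1/2,\infty}$ by definition and $L^2(I,\cdot)$ and $H^1(I,\cdot)$ commute with real interpolation \cite{BergLofstrom76}, the intermediate data space is exactly $L^2(I,H^{(-1/2)})\times V\times H^{(-1/2)}$ in case \eqref{eesv11} and $H^1(I,H^{(-1/2)})\times V\times H$ in case \eqref{disp13}, while the interpolated rate is $(\tau+h)^{1/3}$, resp. $(\tau+h)^{2/3}$.

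\textbf{Main obstacle.} The genuine work is, first, to extract from \cite{Zlotnik94} the two smooth-endpoint error estimates in exactly the norms and data classes needed here --- matching that paper's Sobolev/Nikolskii smoothness scale with the spaces $H^{(\lambda)}$ and confirming the order $(\tau+h)^{2/3}$, which rests on the non-standard energy arguments of \cite{Zlotnik94} (in the spirit of the in-space bound of Lemma \ref{aprioriest}) --- and, second, to handle the term $\rho(\partial_t y(T)-y_{Th}^1)$ there consistently: because $y_{Th}^1$ is characterized only via \eqref{dsefordty} tested against $V_h$, one passes to $\pi_h^0\varphi$ for continuous test functions $\varphi\in V$, combines \eqref{dsefordty} with \eqref{sefordty1}/\eqref{sefordty}, and reduces to a $\mathcal V_\kappa$-type estimate for $\pi_h^1\!\int_I(y-y_\rh)\,\mathrm dt$ together with standard projection errors at both endpoints. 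These steps are technical but routine given item~1 and \cite{Zlotnik94}; the structural point --- linearity of $E_\rh$ in the data together with $K_{\theta,\infty}$-interpolation and $H^{(-1/2)}=(V^\ast,H)_{1/2,\infty}$ --- is precisely what produces the fractional exponents $1/3$ and $2/3$.
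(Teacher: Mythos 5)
Your overall strategy --- reduce everything to the stability and error theorems of \cite{Zlotnik94} for the regularized Galerkin scheme and recover the fractional exponents by $K_{1/2,\infty}$-interpolation of the linear error operator between a rate-$0$ endpoint and a smoother endpoint --- is exactly the device the paper uses, and your items 1 and 2 are essentially sound. (For item 1 the paper simply cites \cite[Thm.~2.1(1)]{Zlotnik94} rather than redoing a duality argument, and for item 2 it cites \cite[Thm.~4.1]{Zlotnik94} directly in the fractional class $L^2(I,H^{(-1/2)})$ instead of interpolating, but your interpolation between the $O(1)$ bound on $L^2(I,V^*)\times V\times V^*$ and the $O((\tau+h)^{2/3})$ bound on $L^2(I,H)\times V\times H$ delivers the same \eqref{eesv11}. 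The one technicality you skip is that for $\sigma>1/4$ the scheme's initial value $\pi_h^0y^0$ differs from the projection $\pi_{h,\sigma_0}^0y^0$ for which the cited error estimates are stated; the paper settles this with Lemma \ref{est dif init} and the bound \eqref{diftyy}.)

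The genuine gap is in item 3. You assert that the smooth endpoint $H^1(I,H)\times V\times H$ carries only a rate-$(\tau+h)^{2/3}$ estimate, ``limited by the spatial regularity $\mathbf{y}\in V\times H$,'' and then claim that interpolating this at $\theta=\tfrac12$ against the rate-$0$ bound on $H^1(I,V^*)\times V\times H$ yields rate $(\tau+h)^{2/3}$. It does not: $M_0^{1/2}M_1^{1/2}$ with $M_0=O(1)$ and $M_1=O((\tau+h)^{2/3})$ is $O((\tau+h)^{1/3})$, so this only reproduces \eqref{eesv11}, not \eqref{disp13}. To obtain $2/3$ by $\theta=\tfrac12$ interpolation you need a rate-$(\tau+h)^{4/3}$ endpoint, and that estimate (the paper's \eqref{est4over3}, from \cite[Thm.~4.1]{Zlotnik94}) is only available when the initial data do not pollute the order, i.e.\ for $\mathbf{y}=0$ (or for much smoother $\mathbf{y}\in V^2\times V$). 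The paper therefore splits the data by linearity: for $u=0$ and $\mathbf{y}\in V\times H$ it uses the rate-$2/3$ estimate \eqref{est2over3} with no interpolation at all, while for $\mathbf{y}=0$ it interpolates between the $O(1)$ bound for $u\in L^2(I,V^*)$ and the $O((\tau+h)^{4/3})$ bound for $u\in H^1(I,H)$, concluding via the embedding $H^1(I,H^{(-1/2)})\hookrightarrow(L^2(I,V^\ast),H^1(I,H))_{1/2,\infty}$ and \eqref{estfordty}. Your argument needs this decomposition (or the stronger $4/3$ endpoint) to close.
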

\begin{proof}
1. According to \cite[Theorem 2.1 (1)]{Zlotnik94},
the bound
\begin{equation}
 \|y_\rh\|_{\mathcal C(\bar I,\,H)}+\Big\|{\int_I}y_\rh~\mathrm dt\Big\|_{V}
 \leq c\,\big(\|u\|_{L^2(I,V^\ast)}+\|y_\rh(0)\|_{H_\tau^0}+\|y^1\|_{V^\ast}\big)
\label{stab L2}
\end{equation}
is valid for any $y_\rh(0)\in V_h$. We have $y_\rh(0)=\pi_h^0y^0$. In the case $\sigma\leq 1/4$, there {clearly} holds
\[
\|\pi_h^0y^0\|_{H^0_\tau}\leq \|\pi_h^0y^0\|_{{H_\rho}}\leq {\|y^0\|_{H_\rho}.}
\]
For $\sigma>1/4$, {we alternatively get}
using \eqref{disp12b} for $\lambda=1$
\begin{equation*}
\|\pi_h^0y^0\|_{H^0_\tau}\leq\|\pi_h^0y^0-\pi_{h,\sigma_0}^hy^0\|_{H^0_\tau}+\|\pi_{h,\sigma_0}^0y^0\|_{H^0_\tau}
\leq c(\tau+h)\|y^0\|_{V}+\|y^0\|_{H_\rho}
\end{equation*}
for any ${\sigma_0\geq\sigma-1/4}$.
\par We proceed with the bound for {$y_{Th}^1$}.
{Identity \eqref{dsefordty} and bound \eqref{stab L2} together with the generalized Minkowski inequality
}
imply
\begin{multline}
\|\rho y_{Th}^1\|_{\mathcal{V}_{\kappa,h}^*}\leq c\left(\Big\|{\int_I}y_\rh~\mathrm dt\Big\|_{V}
+{\Big\|\int_Iu~\mathrm dt\Big\|_{V^\ast}}+\|\rho y^1\|_{V^\ast}\right)\\
\leq c_1\,\big(\|u\|_{L^2(I,V^\ast)}+{\|\mathbf{y}\|_{V\times H}}\big).
\label{stab L2 a}
\end{multline}
{Finally we derive}
bound \eqref{disp11}.
\par 2. Let $\tilde y_\rh$ be the solution of equation \eqref{discrete_state_equation1} for $\tilde y_\rh(0)=\pi_{h,\sigma_0}^0y^0$.
\Blue{Bound \eqref{stab L2} together with \eqref{est dif init} for $\lambda=1$, the bound in Proposition \ref{prop:exist weaker} and the stability of $\pi_h^1$ in $V$ imply
\[
\|y-\tilde y_\rh\|_{\mathcal C(\bar I,H)}
 +\Big\|\int_I(\pi_h^1y-\tilde y_\rh)~\mathrm dt\Big\|_{V}\leq c\big(\|u\|_{L^2(I,V^\ast)}+\|\mathbf{y}\|_{H\times V^\ast}\big).
\]
Owing to \cite[Theorem 4.1]{Zlotnik94} the following error estimate holds
\[
\|y-\tilde y_\rh\|_{\mathcal C(\bar I,H)}
 +\Big\|\int_I(\pi_h^1y-\tilde y_\rh)~\mathrm dt\Big\|_{V}\leq c(\tau+h)^{2/3}\big(\|u\|_{L^2(Q)}+\|\mathbf{y}\|_{V\times H}\big).
\]
Using the $K_{1/2,\infty}$-method}
\Blue{and equality \eqref{interpsp1} we get the intermediate} error estimate
\begin{multline}
 \|y-\tilde y_\rh\|_{\mathcal C(\bar I,H)}
 +\Big\|\int_I(\pi_h^1y-\tilde y_\rh)~\mathrm dt\Big\|_{V}\\
\leq c(\tau+h)^{1/3}\big(\|u\|_{{H^{-1/2,0;2}(Q)}}+{\|\mathbf{y}\|_{H^{(1/2)}\times H^{(-1/2)}}}\big).
\end{multline}
In the case $\sigma\leq 1/4$ we can choose $\sigma_0=0$, then $y_\rh(0)=\pi_{h,\sigma_0}y^0=\pi_h^0y^0$ and $\tilde y_\rh=y_\rh$.
In the case $\sigma\geq 1/4$ we can use the stability bound \eqref{stab L2} and estimate \eqref{disp12b} to get
\begin{equation}
\|\tilde y_\rh-y_\rh\|_{\mathcal C(\bar I,H)}+\Big\|{\int_I}(\tilde y_\rh-y_\rh)~\mathrm dt\Big\|_{V}\leq c\,\|\pi_{h,\sigma_0}^0y^0-\pi_h^0y^0\|_{H_\tau^0}\leq c_1(\tau+h)\|y^0\|_{V}.
\label{diftyy}
\end{equation}

\par Then by subtracting \eqref{dsefordty} from \eqref{sefordty} and applying identity \eqref{pi1} we find
\begin{multline*}
 \langle\rho\big(\partial_ty(T)-{y_{Th}^1}\big),\varphi\rangle_{\Omega}
 =-\left(\kappa\partial_x{\int_I}(y-y_{\rh})~\mathrm dt,\partial_x\varphi\right)_{H}\\
 =-\left(\kappa\partial_x{\int_I}(\pi_h^1y-y_{\rh})~\mathrm dt,\partial_x\varphi\right)_{H}
 ~~\forall \varphi\in V_h,
\end{multline*}
consequently
\begin{equation}
 \|\rho\big(\partial_ty(T)-{y_{Th}^1}\big)\|_{\mathcal{V}_{\kappa,h}^*}
 \leq c\Big\|\int_I(\pi_h^1y-y_\rh)~\mathrm dt\Big\|_{V}.
\label{estfordty}
\end{equation}
Thus we obtain \eqref{eesv11}.

\par 3. Once again we apply \cite[Theorem 4.1]{Zlotnik94}
and first get the estimate
\[
 \|y-\tilde{y}_\rh\|_{\mathcal C(\bar I,H)}+\Big\|\int_I(\pi_h^1y-\tilde{y}_\rh)~\mathrm dt\Big\|_{V}
 \leq c\,(\tau+h)^{2/3}\big(\|u\|_{L^2(I,H)}+\|\mathbf{y}\|_{V\times H}\big).
\]
Combining it together with \eqref{diftyy}, we derive
\begin{equation}
 \|y-y_\rh\|_{\mathcal C(\bar I,H)}+\Big\|\int_I(\pi_h^1y-y_\rh)~\mathrm dt\Big\|_{V}
 \leq c\,(\tau+h)^{2/3}\big(\|u\|_{L^2(I,H)}+\|\mathbf{y}\|_{V\times H}\big).
\label{est2over3}
\end{equation}
In this proof, we apply this estimate in the case $u=0$ only (but in general case below).

\par In the remaining case $\mathbf{y}=0$, from \cite[Theorem 4.1]{Zlotnik94} we also get the higher order error estimate
\begin{equation}
 \|y-y_\rh\|_{\mathcal C(\bar I,H)}+\Big\|\int_I(\pi_h^1y-y_\rh)~\mathrm dt\Big\|_{V}
 \leq c\,(\tau+h)^{4/3}\|u\|_{H^1(I,H)}\ \ \forall u\in H^1(I,H).
\label{est4over3}
\end{equation}
Moreover owing to Proposition \ref{prop:exist weaker} and bound \eqref{disp11} (both for $\mathbf{y}=0$) we have
\begin{multline*}
 \|y-y_\rh\|_{\mathcal C(\bar I,H)}+\Big\|\int_I(\pi_h^1y-y_\rh)~\mathrm dt\Big\|_{V}\\
 \leq \|y\|_{\mathcal C(\bar I,H)}+c\|\mathcal{I}_ty\|_{\mathcal{C}(\bar{I},V)}
 +\|y_\rh\|_{\mathcal C(\bar I,H)}+\Big\|\int_Iy_\rh~\mathrm dt\Big\|_{V}
\\
 \leq c_1\|u\|_{L^2(I,V^*)}\ \ \forall u\in L^2(I,V^*).
\end{multline*}
\par The last bound and estimate \eqref{est4over3} imply by the $K_{1/2,\infty}$-method {and equality \eqref{interpsp2}:
\begin{multline*}
 \|y-y_\rh\|_{\mathcal C(\bar I,H)}+\Big\|\int_I(\pi_h^1y-y_\rh)~\mathrm dt\Big\|_{V}\\
 \leq c\,(\tau+h)^{2/3}\|u\|_{(L^2(I,V^\ast),H^1(I,H))_{1/2,\infty}}
 \leq c_1\,(\tau+h)^{2/3}\|u\|_{SHW^{-1/2,1;2}(Q)}
\end{multline*}
for any $u\in SHW^{-1/2,1;2}(Q)$.
Applying} inequality \eqref{estfordty} we complete the proof.
\end{proof}

\begin{remark}
A priori stability bound \eqref{disp11} implies the unique solvability of the discrete state equation \eqref{discrete_state_equation1}-\eqref{discrete_state_equation2}.
\end{remark}
\begin{remark}
According to the given proof, for $\tilde{y}_\rh$ in place of $y_\rh$
the norms of $\mathbf{y}$ in \eqref{disp11} and \eqref{eesv11} can be weakened down to respectively
$\|\mathbf{y}\|_{H\times V^*}$ and $\|\mathbf{y}\|_{H^{(1/2)}\times H^{(-1/2)}}$.
For $\sigma\leq 1/4$, we have $\tilde{y}_\rh=y_\rh$.
The same can be shown for $y_\rh$ also for $\sigma>1/4$
provided that $\tau\alpha_h\leq c_0$ with any $c_0>0$.
\end{remark}
}
\section{Discrete control problem}\label{sec:disc prob}
First we introduce the discrete mapping \[\hat S_{\rh}\colon (u,y_0,y_1)\mapsto (y_{\rh},y_{\rh}(T),{\rho y_{Th}^1})\] and the discrete affine linear control-to-state mapping
\begin{equation*}\label{discrete_control_to_state_mapping}
S_{\rh}\colon \M\rightarrow \mathcal{Y}_\rh
={V_\rh\times V_h\times (\rho\times V_h)},~u\mapsto (y_\rh,y_\rh(T),{\rho y_{Th}^1})
\end{equation*}
defined by $S_{\rh}{u}=\hat S_{\rh}(u,0,0)+\hat S_{\rh}(0,y_0,y_1)$, with $\rho\times V_h=\{\rho\varphi; \varphi\in V_h\}$.
The mapping $S_{\rh}$ is a composition of
\[
 u\mapsto \vec u=\{\langle u,{e_{m,n}^\rh}
 \rangle_{\M,\,\C}\}_{m,n=1}^{M,N-1},\quad \M\rightarrow \R^{M(N-1)},
\]
{where $\{e_{m,n}^\rh\}$ is a basis in $V_\rh$,}
and $\vec u\mapsto (y_\rh,y_\rh(T),{\rho y_{Th}^1})$.
The {former} mapping is bounded due to {$e_{m,n}^\rh\in\C$}
and the {latter one} is finite dimensional. Thus $S_{\rh}$ is a bounded operator. Then we consider the following semi-discrete optimal control problem
\begin{equation}\label{semi_discrete_problem}\tag{$\mathcal P_\rh$}
 j_\rh(u)=\half\left\|S_{\rh}u-{\mathbf{z}}\right\|_{{\mathcal Y_h}}^{2}
 +\alpha\|u\|_{\M}\to\min_{u\in \M}
\end{equation}
with the squared semi-norm
corresponding to the inner product
\[
{
 ({\mathbf{z}},\tilde{{\mathbf{z}}})_{\mathcal Y_h}
 =(\rho z_1,\tilde{z}_1)_{\L}+(\rho z_2,\tilde{z}_2)_{H}+\big(A_h^{-1}z_3,A_h^{-1}\tilde{z}_3\big)_{\mathcal{V}_\kappa}\quad \forall {\mathbf{z}},\tilde{{\mathbf{z}}} \in \mathcal{Y}.}
\]
\par Using the similar argument as in the continuous case it can be shown that \eqref{semi_discrete_problem} has a solution $\bar u_\rh$ which is not unique in general, and due to {the optimality, the stability bound \eqref{disp11} and property \eqref{regularity_poisson} (for $\lambda=-1$)} one gets
\[
\alpha\|\bar u_\rh\|_{\M}\leq j_\rh(\bar u_\rh)\leq j_\rh(0)=\half\|S_\rh(0)-z\|_{\mathcal Y_h}^{2}
\leq c\,\big(\|\mathbf{y}\|_{V\times V^*}+\|z\|_{\mathcal Y}\big)^{2},
\]
cf. \eqref{uopt0}, and consequently
\begin{equation}
 {\|\bar u_\rh\|_{\M}\leq c\big(\|\mathbf{y}\|_{V\times V^*}+\|z\|_{\mathcal Y}\big)^2\leq C.}
\label{est u_h}
\end{equation}
\begin{theorem}
{Let $\mathbf{z}\in \mathcal{Y}$, $\mathbf{y}\in V\times H^{(-1/2)}$ and
$\bar u,\bar u_\rh\in \M$ be the optimal controls of respectively problems \eqref{measure_control_problem} and \eqref{semi_discrete_problem}.}
Then there holds
\[
\bar u_{\rh}\rightharpoonup^\ast\bar u~~\text{in}~~\M,\quad \|\bar u_{\rh}\|_{\M}\rightarrow \|\bar u\|_{\M}~~ \text{as}~~ {\rh}
\rightarrow 0.
\]
\end{theorem}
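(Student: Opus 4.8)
The plan is to run a direct‑method / $\Gamma$‑limit argument built on four ingredients already available: the a priori bound \eqref{est u_h}, weak‑star sequential compactness of bounded sets in $\M=\C^\ast$ (since $\C$ is separable), the \emph{uniform in} $\rh$ error estimate \eqref{eesv11} for the discrete state equation, and the weak‑star‑to‑strong sequential continuity of the continuous control‑to‑state operator $S\colon\M\to\mathcal Y$ established in the proof of existence and uniqueness of $\bar u$. Recall also that $\bar u$ is the \emph{unique} minimizer of $j$ over $\M$, which is what makes the whole family converge.

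First I would fix an arbitrary sequence $\rh_k=(\tau_k,h_k)\to 0$. By \eqref{est u_h} the sequence $\{\bar u_{\rh_k}\}$ is bounded in $\M$, so the Banach–Alaoglu theorem yields a subsequence (not relabeled) with $\bar u_{\rh_k}\rightharpoonup^\ast\tilde u$ in $\M$ for some $\tilde u\in\M$. Denote by $y^{(k)},\tilde y$ the continuous states of $\bar u_{\rh_k},\tilde u$ and by $(y^{(k)}_{\rh_k},y^{1,(k)}_{Th})$ the discrete state of $\bar u_{\rh_k}$. The crucial step is to show $\|S_{\rh_k}\bar u_{\rh_k}-\mathbf z\|_{\mathcal Y_{h_k}}\to\|S\tilde u-\mathbf z\|_{\mathcal Y}$. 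I would use the splitting $S_{\rh_k}\bar u_{\rh_k}-S\tilde u=(S_{\rh_k}\bar u_{\rh_k}-S\bar u_{\rh_k})+(S\bar u_{\rh_k}-S\tilde u)$: the second summand tends to $0$ in $\mathcal Y$ by weak‑star‑to‑strong continuity of $S$, while for the first we note that $\bar u_{\rh_k}\in\M\hookrightarrow L^2(I,H^{(-1/2)})$ with $\|\bar u_{\rh_k}\|_{L^2(I,H^{(-1/2)})}\le c\|\bar u_{\rh_k}\|_{\M}\le C$ (by \eqref{embedding1}, \eqref{embedding2} and \eqref{est u_h}) and $\mathbf y\in V\times H^{(-1/2)}$, so \eqref{eesv11} gives $\|y^{(k)}-y^{(k)}_{\rh_k}\|_{\mathcal C(\bar I,H)}\to 0$ and $\|\rho(\partial_t y^{(k)}(T)-y^{1,(k)}_{Th})\|_{\mathcal{V}_{\kappa,h_k}^*}\to 0$. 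Combining these and using $A_h^{-1}=\pi_h^1A^{-1}$ with $\pi_h^1w\to w$ in $V$ for all $w\in V$ to match $\|\cdot\|_{\mathcal Y_{h_k}}$ with $\|\cdot\|_{\mathcal Y}$ along the convergent sequences involved, one obtains the claimed convergence of the tracking terms. The same computation with the \emph{fixed} control $\bar u$ in place of $\bar u_{\rh_k}$ (now only the discretization error \eqref{eesv11} enters) gives $j_{\rh_k}(\bar u)\to j(\bar u)$.

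Next I would pass to the limit in the optimality inequality $j_{\rh_k}(\bar u_{\rh_k})\le j_{\rh_k}(\bar u)$, valid since $\bar u_{\rh_k}$ solves \eqref{semi_discrete_problem}. Using weak‑star lower semicontinuity of $\|\cdot\|_{\M}$ together with the convergence of the tracking terms,
\[
 j(\tilde u)=\half\|S\tilde u-\mathbf z\|_{\mathcal Y}^2+\alpha\|\tilde u\|_{\M}\le\liminf_{k\to\infty}j_{\rh_k}(\bar u_{\rh_k})\le\lim_{k\to\infty}j_{\rh_k}(\bar u)=j(\bar u).
\]
Since $\bar u$ is the unique minimizer of $j$ over $\M$, this forces $\tilde u=\bar u$; as the subsequence was extracted from an arbitrary sequence $\rh_k\to 0$, the subsequence principle yields $\bar u_\rh\rightharpoonup^\ast\bar u$ in $\M$ as $\rh\to 0$. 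Finally, with $\tilde u=\bar u$ the displayed chain becomes a chain of equalities, so $j_{\rh_k}(\bar u_{\rh_k})\to j(\bar u)$; subtracting the convergent tracking terms leaves $\alpha\|\bar u_{\rh_k}\|_{\M}\to\alpha\|\bar u\|_{\M}$, and since every subsequence enjoys this property, $\|\bar u_\rh\|_{\M}\to\|\bar u\|_{\M}$.

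The step I expect to be the main obstacle is the crucial one above: controlling $S_{\rh_k}\bar u_{\rh_k}-S\tilde u$ when both the control and the mesh vary with $k$. This is precisely why the \emph{uniform in} $\rh$ form of the state error estimate \eqref{eesv11}, together with the uniform bound \eqref{est u_h} and the separately available weak‑star‑to‑strong continuity of $S$, is indispensable. A secondary technical point is the identification of $\lim\|\cdot\|_{\mathcal Y_{h_k}}$ with $\|\cdot\|_{\mathcal Y}$, handled via $A_h^{-1}=\pi_h^1A^{-1}\to A^{-1}$ and $\pi_h^1\to\mathrm{id}$ on $V$, which takes care of the discrepancy between the discrete dual norm $\|\cdot\|_{\mathcal{V}_{\kappa,h}^*}$ appearing in the error estimate and the norm $\|\cdot\|_{\mathcal{V}_\kappa^*}$ in $\mathcal Y$.
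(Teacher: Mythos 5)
Your proposal is correct and follows essentially the same route as the paper: extract a weak-star convergent subsequence via \eqref{est u_h} and Banach--Alaoglu, split $S_{\rh}\bar u_{\rh}-S\tilde u$ into a discretization error controlled uniformly by \eqref{eesv11} plus a term handled by the weak-star-to-strong continuity of $S$, reconcile $\|\cdot\|_{\mathcal Y_h}$ with $\|\cdot\|_{\mathcal Y}$ via the properties of $\pi_h^1$ (the paper invokes \eqref{galerkin ort}, which is the same fact), and conclude by lower semicontinuity, uniqueness of $\bar u$, and the subsequence principle. The final step recovering $\|\bar u_\rh\|_{\M}\to\|\bar u\|_{\M}$ from $j_\rh(\bar u_\rh)\to j(\bar u)$ and the convergence of the tracking terms is also identical.
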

\begin{proof}
{Owing to \eqref{est u_h}}
there exists a sequence {$\{\rh_n\}$,
$\rh_n\rightarrow 0$,} and $u\in \M$ such that $\bar u_{\rh_n}\rightharpoonup^\ast u$ in $\M$ as $n\rightarrow\infty$.
{This implies the limit relation}
\begin{equation}\label{conv_tracking}
\|S_{\rh_n} \bar u_{\rh_n}-{\mathbf{z}}\|_{\mathcal Y_{\rh_n}}\rightarrow \|Su-{\mathbf{z}}\|_{\mathcal Y}.
\end{equation}
To {prove it}, we write the chain of inequalities
\begin{multline*}
\big|\|S_{\rh_n} \bar u_{\rh_n}-{\mathbf{z}}\|_{\mathcal Y_{\rh_n}}-\|Su-{\mathbf{z}}\|_{\mathcal Y}\big|\\
\leq \big|\|S_{\rh_n} \bar u_{\rh_n}-{\mathbf{z}}\|_{\mathcal Y_{\rh_n}}-\|Su-{\mathbf{z}}\|_{\mathcal Y_{\rh_n}}\big|+\big|\|Su-{\mathbf{z}}\|_{\mathcal Y_{\rh_n}}-\|Su-{\mathbf{z}}\|_{\mathcal Y}\big|
\\
\leq \|S_{\rh_n} \bar u_{\rh_n}-Su\|_{\mathcal Y_{\rh_n}}+\big|\|Su-{\mathbf{z}}\|_{\mathcal Y_{\rh_n}}-\|Su-{\mathbf{z}}\|_{\mathcal Y}\big|
\\
\leq \|S_{\rh_n} \bar u_{\rh_n}-S \bar u_{\rh_n}\|_{\mathcal Y_{\rh_n}}+\|S\bar u_{\rh_n}-Su\|_{\mathcal Y_{\rh_n}}+\big|\|Su-{\mathbf{z}}\|_{\mathcal Y_{\rh_n}}-\|Su-{\mathbf{z}}\|_{\mathcal Y}\big|.
\end{multline*}
The first term {on the right in the last inequality} converges to zero according to {the error estimate} \eqref{eesv11}. The convergence of the second term follows from the weak-star-to-strong continuity of $S\colon \M\rightarrow {\mathcal{Y}}$
and the stability of {$\pi_h^1$ in $V$.}
Finally, property \eqref{galerkin ort} for $\tilde{w}=w$
implies the convergence of the last term.

\par Then \eqref{conv_tracking} and the weak-star lower semicontinuity of $\|\cdot\|_{\M}$ in $\M$ implies
\[
{
j(u)\leq \liminf_{n\rightarrow\infty}j_{\rh_n}(\bar u_{\rh_n})\leq\limsup_{n\rightarrow\infty}j_{\rh_n}(\bar u_{\rh_n})
\leq \limsup_{n\rightarrow\infty}j_{\rh_n}(\bar u)=j(\bar u).}
\]
Thus, the uniqueness of $\bar u$ means that $u=\bar u$
and in addition
implies the convergence of the whole sequence $\bar u_{\rh}\rightharpoonup^\ast \bar u$ {in $\M$} as $\rh\rightarrow 0$.  Moreover, we have $j_{\rh}(\bar u_{\rh})\rightarrow j(\bar u)$. This and \eqref{conv_tracking} {lead to} $\|\bar u_{\rh}\|_{\M}\rightarrow \|\bar u\|_{\M}$.
\end{proof}
\par For convenience we set $F_h({\mathbf{z}})={(1/2)}\|{\mathbf{z}}\|_{{\mathcal Y_h}}^2$. In the following
the {directional} derivative of a functional $g\colon \M\rightarrow \R$ at $u\in\M$ in direction $\delta u\in\M$ {is denoted
by} $Dg(u)\delta u$.
In the case $Dg(u)\in \M^\ast$, $g$ is {the} Gateaux differentiable in $u$. Moreover, we make use of the convex subdifferential of $\|\cdot\|_{\M}$. Let $\hat u\in \M$ and $p\in\C$. Then there holds $p\in \partial\|\hat u\|_{\M}$ {if and only if}
\[
\langle p,u-\hat u\rangle_{\C,\M}+\alpha\|\hat u\|_{\M}\leq \alpha\|u\|_{\M}~~\forall u\in\M.
\]
An element $\bar u_\rh\in\M$ is an optimal solution of \eqref{semi_discrete_problem} if and only if $-D((F_h\circ S_\rh)(\bar u_\rh))\in \alpha\partial\|\bar u_\rh\|_{\M}$.
To calculate $D((F_h\circ S_\rh)(u))$ for $u\in \M$, we apply the Lagrange technique
and define the Lagrange functional by
\begin{multline*}
L(u,y_\rh, {y_{Th}^1},p_\rh,{p_{0h}^1})=F_{h}(y_\rh,y_\rh(T),{y_{Th}^1})-B_\sigma(y_\rh,p_\rh)-(\rho {y_{Th}^1},p_\rh(T))_{H}
\\[1mm]
+\langle u,p_\rh\rangle_{\M,\,\C}+{(\rho y^1,p_\rh(0))_H+(\rho(y_\rh(0)-y^0),p_{0h}^1)_H}
\end{multline*}
with $(p_\rh,{p_{0h}^1})\in V_\rh\times V_h$
{(where we base on identities \eqref{discrete_state_equation1}-\eqref{discrete_state_equation2}).}
We obviously have
\[
(F_h\circ S_\rh)(u)=L(u,S_\rh u,p_\rh,{p_{0h}^1})\quad\forall (p_\rh,{p_{0h}^1})\in V_\rh\times V_h.
\]
Thus there holds
\[
D((F_h\circ S_\rh)(u))\delta u=D_uL(u,y_\rh,{y_{Th}^1},p_\rh,{p_{0h}^1})\delta u =\langle p_\rh,\delta u\rangle_{\C,\,\M}~~\forall\delta u\in\M
\]
{provided that} $(p_\rh,{p_{0h}^1)}
\in {V}_\rh\times V_h$ is the solution of the discrete problem
\begin{multline*}
-D_{y_\rh}L(u,y_\rh,{y_{Th}^1},p_\rh,{p_{0h}^1})v
= B_\sigma(v,p_\rh)-(\rho(y_\rh-z_1),v)_{\L}\\
-(\rho(y_\rh(T)-z_2),v(T))_{H}-(\rho v(0),{p_{0h}^1})_{H}=0~~\forall v\in V_\rh
\end{multline*}
and
\[
-D_{{y_{Th}^1}}L(u,y_\rh,{y_{Th}^1},p_\rh,{p_{0h}^1})\varphi
= (\rho\varphi,p_\rh(T))_{H}-\big(\rho A_h^{-1}(\rho y^1_{Th}-z_3),\varphi\big)_{H}=0~~\forall\varphi\in V_h.
\]
Therefore \textit{the discrete optimality system} consists of \textit{the discrete state equation}
\begin{equation}
\begin{aligned}\label{bar eq state}
B_\sigma(\bar y_\rh,v)+(\rho{\bar{y}_{Th}^1},v(T))_{H}&
=\langle \bar u_\rh,v\rangle_{\M,\,\C}+(\rho y^1,v(0))_{H}&&\forall v\in {V}_\rh,\\
(\rho\bar y_\rh(0),\varphi)_{H}&=(\rho y^0,\varphi)_{H}&&\forall \varphi\in V_h,
\end{aligned}
\end{equation}
\textit{the discrete adjoint state equation}
\begin{equation}
\begin{aligned}\label{bar eq adj}
B_\sigma(v,\bar p_\rh)-(\rho v(0),\bar{p}_{0h}^1)_{H}&=(\rho(\bar y_\rh-z_1),v)_{\L}\\
&\hfill+(\rho(\bar{y}_\rh(T)-z_2),v(T))_{H}&&\forall v\in {V}_\rh,\\[1mm]
(\rho\varphi,\bar p_\rh(T))_{H}&=\big(\rho A_h^{-1}(\rho{\bar{y}_{Th}^1}-z_3),\varphi\big)_{H}&&\forall \varphi\in V_h
\end{aligned}
\end{equation}
and \textit{the discrete variational inequality}
\begin{equation}\label{discrete_sub_gradient_definition}
\langle -\bar p_\rh,u-\bar u_\rh\rangle_{\C,\,\M}+\|\bar u_\rh\|_{\M}\leq \|u\|_{\M}~~\forall u\in\M.
\end{equation}
\section{Stability and error estimates for the discrete adjoint state equation}\label{sec:error_adjoint}
{
We define \textit{the general discrete adjoint state equation}
\begin{align}
B_\sigma(v,p_\rh)-(\rho v(0),p_{0h}^1)_{H}&=(\rho(y-z_1),v)_{\L}\nonumber\\
&\hfill+(\rho(y(T)-z_2),v(T))_{H}&&\forall v\in V_\rh,
\label{bar eq adj 2a}\\[1mm]
(\rho\varphi,p_\rh(T))_{H}&=\big(\rho A_h^{-1}(\rho\partial_ty(T)-z_3),\varphi\big)_{H}&&\forall \varphi\in V_h.
\label{bar eq adj 2b}
\end{align}
Here $y$ is the solution to the state equation \eqref{state_equation}.
Clearly identity \eqref{bar eq adj 2b} means simply that $p_\rh(T)=A_h^{-1}q_T=\pi_1^hA^{-1}q_T$ with $q_T:=\rho\partial_ty(T)-z_3$.
\par Now we get a stability bound and error estimates in $\mathcal{C}(\bar{I},H)\times \mathcal{V}_{\kappa,h}^*$ and $\C$ for the discrete adjoint state equation.
\begin{proposition}\label{prop_est_adjoint}
Let $p=S^*\big(y-z_1,-(y(T)-z_2),A^{-1}(\rho\partial_ty(T)-z_3)\big)$ and $(p_\rh,p_{0h}^1)$ be the solution of
the corresponding general discrete adjoint state equation \eqref{bar eq adj 2a}-\eqref{bar eq adj 2b}.
\begin{enumerate}
\item If $y\in\mathcal{C}(\bar{I},H)\cap\mathcal{C}^1(\bar{I},V^*)$ and $\mathbf{z}\in\mathcal{Y}$, then the following stability bound holds
\begin{equation}
\|p_\rh\|_{\mathcal C(\bar I,V)}+\|\rho p_{0h}^1\|_{\mathcal{V}_{\kappa,h}^*}
\leq c\,\big(\|y-z_1\|_{L^2(I\times\Omega)}+\|y(T)-z_2\|_H+\|\rho\partial_ty(T)-z_3\|_{V^*}\big).
\label{eesv12}
\end{equation}
\item If $u\in L^2(I,V^*)$, $\mathbf{z}\in\mathcal{Y}$ and $\mathbf{y}\in H\times V^*$, then the following error estimate holds
\begin{equation}
 \|p-p_\rh\|_{\mathcal C(\bar I,H)}+\|\rho(\partial_tp(0)-p_h^0)\|_{\mathcal{V}_{\kappa,h}^*}\leq c(\tau+h)^{2/3}
 \big(\|u\|_{L^2(I,V^*)}+\|\mathbf{z}\|_\mathcal{Y}+\|\mathbf{y}\|_{H\times V^*}\big).
\label{eesv12a}
\end{equation}
\item If $u\in {H^{-1/2,0;2}(Q)}$,
$\mathbf{z}\in\mathcal{Y}^{1/2}:={\tilde{H}^{1/2,0;2}(Q)}\times H^{(1/2)}\times H^{(-1/2)}$ and $\mathbf{y}\in H^{(1/2)}\times H^{(-1/2)}$, then the following error estimate holds
\begin{gather}
\hspace{-5pt}
\|p-p_\rh\|_{\mathcal C(\bar{I}\times\bar{\Omega})}
\leq c(\tau+h)^{2/3}\big(\|u\|_{{H^{-1/2,0;2}(Q)}}
+\|\mathbf{z}\|_{\mathcal{Y}^{1/2}}
+\|\mathbf{y}\|_{H^{(1/2)}\times H^{(-1/2)}}\big).
\label{eesv13}
\end{gather}
\item If $u\in {SHW^{-1/2,1;2}(Q)}$,
$\mathbf{z}\in\mathcal{Y}^{3/2}:={\tilde{H}^{3/2,0;2}(Q)}\times H^{(3/2)}\times H^{(1/2)}$ and $\mathbf{y}\in H^{(3/2)}\times H^{(1/2)}$, then the following higher order error estimate holds
\begin{equation}
 \|p-p_\rh\|_{L^2(I,\mathcal{C}_0(\Omega))}
 \leq c(\tau+h)^{4/3}\big(\|u\|_{{SHW^{-1/2,1;2}(Q)}}
+\|\mathbf{z}\|_{\mathcal{Y}^{3/2}}
+\|\mathbf{y}\|_{H^{(3/2)}\times H^{(1/2)}}\big).
\label{eesv33}
\end{equation}
\end{enumerate}
\end{proposition}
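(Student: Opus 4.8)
The whole argument rests on the observation that, since $B_\sigma$ is symmetric and both $V_\rh$ and the temporal grid are invariant under the reflection $t\mapsto T-t$ (which I denote by a superscript $\flat$, i.e.\ $v^\flat(t):=v(T-t)$), the discrete adjoint equation \eqref{bar eq adj 2a}--\eqref{bar eq adj 2b} is, after this reflection, an instance of the discrete state equation \eqref{discrete_state_equation1}--\eqref{discrete_state_equation2}. Indeed, $B_\sigma(v,p_\rh)=B_\sigma(p_\rh^\flat,v^\flat)$, so substituting $v\mapsto v^\flat$ in \eqref{bar eq adj 2a} shows that $(p_\rh^\flat,-p_{0h}^1)\in V_\rh\times V_h$ solves \eqref{discrete_state_equation1}--\eqref{discrete_state_equation2} with source $\rho\,(y^\flat-z_1^\flat)\in L^2(I\times\Omega)$, velocity datum $y(T)-z_2$ and position datum $A^{-1}q_T$, the only deviation being that the discrete position equals $A_h^{-1}q_T=\pi_h^1A^{-1}q_T$ instead of $\pi_h^0A^{-1}q_T$. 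Reflecting \eqref{dual_state_equation} in the same way, $p^\flat$ is the corresponding weak (resp.\ weaker) solution of \eqref{state_equation} with exactly these data. Since all the norms occurring in the statement are invariant under time reflection, $\|p-p_\rh\|=\|p^\flat-p_\rh^\flat\|$, and each of \eqref{eesv12}--\eqref{eesv33} reduces to a stability or error estimate for the discrete state equation applied to this reflected data.

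The plan is then to run through the four items, in each case first identifying the Sobolev/Nikolskii class of the effective data --- the source $\rho(y-z_1)$, the position datum $A^{-1}q_T$ with $q_T=\rho\partial_ty(T)-z_3$, and the velocity datum $y(T)-z_2$ --- by combining the state-regularity results of Propositions~\ref{prop:exist weak} and \ref{prop:exist weaker} (interpolated in the scale $H^{(\lambda)}$) with the elliptic regularity \eqref{regularity_poisson}, and then invoking the matching estimate for the discrete state equation. For item~1 I would use the energy-norm stability bound of \cite{Zlotnik94} together with Lemma~\ref{lem:stab_est} to pass to $\|p_\rh\|_{\mathcal C(\bar I,V)}$; here the effective source is in $L^2(I,H)$ and the effective data in $V\times H$, i.e.\ precisely the energy class. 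For item~2 the decisive point is again that the source is only needed in $L^2(I,H)$ and the data in $V\times H$, which is exactly the hypothesis of estimate \eqref{est2over3} from the proof of Proposition~\ref{prop: est stat eq}; this yields the rate $(\tau+h)^{2/3}$ with no temporal regularity of the source required. For items~3 and 4, measured in the strong-in-space norms $\mathcal C(\bar I\times\bar\Omega)$ resp.\ $L^2(I,\mathcal C_0(\Omega))$, I would invoke the corresponding error estimates of \cite{Zlotnik94} in those norms (the strong-in-space analysis in the spirit of Lemma~\ref{aprioriest}) for data of the Nikolskii smoothness dictated by the hypotheses on $\mathbf z$ and $\mathbf y$.

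Two further points must be settled. First, the discrepancy between the prescribed discrete position $\pi_h^1A^{-1}q_T$ and the $L^2_\rho$-type projection $\pi_{h,\sigma_0}^0A^{-1}q_T$ of the state-equation framework: I would split $p_\rh^\flat=\tilde p_\rh+(p_\rh^\flat-\tilde p_\rh)$ with $\tilde p_\rh$ the discrete state solution carrying the admissible discrete datum, estimate $\|p^\flat-\tilde p_\rh\|$ by the bounds just cited, and estimate $\|p_\rh^\flat-\tilde p_\rh\|$ by a stability bound applied to the homogeneous problem with discrete datum $\pi_h^1A^{-1}q_T-\pi_{h,\sigma_0}^0A^{-1}q_T$; by the FEM elliptic error bounds \eqref{V_est_poisson}--\eqref{H_est_poisson} applied to $A^{-1}q_T$ and Lemma~\ref{est dif init}, this correction is of order at least the claimed rate. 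Second, the approximate endpoint velocity $p_{0h}^1$ is treated exactly as $y_{Th}^1$ was in Proposition~\ref{prop: est stat eq}: testing \eqref{bar eq adj 2a} with time-constant $\varphi\in V_h$ gives an identity of the type \eqref{dsefordty} relating $\rho p_{0h}^1$, $\int_Ip_\rh\,\mathrm dt$ and the data, whose continuous counterpart follows from \eqref{sefordty1} (resp.\ \eqref{sefordty}), and the difference is controlled via \eqref{estfordty}. I expect the main obstacle to lie in the regularity bookkeeping of the first step: since multiplication by $\rho\in H^1(\Omega)$ need not preserve $H^{(\lambda)}$ for $\lambda>1$, the class of the effective source $\rho(y-z_1)$ has to be tracked with care --- most cleanly by decomposing $p$ into the part driven by $y$ and the part driven by $\mathbf z$ --- and, for $\sigma>1/4$, the equivalences between the $H_\tau^0$, $H$ and $\mathcal V_\kappa$ norms used along the way rely on Assumption~\ref{as:stability}.
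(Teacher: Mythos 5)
Your overall strategy coincides with the paper's: the adjoint problem is treated as a time-reversed state equation, the stability/error estimates of \cite{Zlotnik94} are invoked for the reflected data, the inadmissible terminal datum $\pi_h^1A^{-1}q_T$ is handled by comparison with an auxiliary discrete solution carrying a projected datum (the paper's $\check{p}_\rh$ with $\check{p}_\rh(T)=\pi_h^0A^{-1}q_T$, controlled exactly as you propose via \eqref{V_est_poisson}--\eqref{H_est_poisson} and the stability bounds), and $p_{0h}^1$ is treated through the integrated identity and \eqref{estfordty}. Your identification of the effective data classes in items 1 and 2 (source in $L^2(I,H)$, data in $V\times H$, hence the counterpart of \eqref{est2over3}) is precisely what the paper does, and your worry about multiplication by $\rho$ is moot because the reference's estimates are stated in terms of $\phi=y-z_1$ rather than $\rho\phi$.

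There is, however, one concrete gap in your plan for items 3 and 4. The pointwise-in-space error estimates available from \cite[Theorem 5.3]{Zlotnik94} control only the error at the grid time levels, i.e.\ the norm $\max_{0\leq m\leq M}\|(p-\check{p}_\rh)(t_m)\|_{\mathcal C(\bar{\Omega})}=\|i_\tau p-\check{p}_\rh\|_{\mathcal C(\bar{I}\times\bar{\Omega})}$, not the full norms $\mathcal C(\bar{I}\times\bar{\Omega})$ and $L^2(I,\mathcal{C}_0(\Omega))$ claimed in \eqref{eesv13} and \eqref{eesv33}. Since $\check{p}_\rh$ is piecewise linear in time, one must additionally bound the time-interpolation error $\|p-i_\tau p\|$ in these norms; the paper does this with the multiplicative inequalities
\[
 \|w\|_{\mathcal C(\bar I\times\bar\Omega)}\leq c\|w\|_{\mathcal C(\bar I,H)}^{1/2}\|w\|_{\mathcal C(\bar I,V)}^{1/2},\qquad
 \|w\|_{L^2(I,\mathcal{C}_0(\Omega))}\leq c\|w\|_{L^2(I,H)}^{1/2}\|w\|_{L^2(I,V)}^{1/2},
\]
combined with the regularity $\partial_tp\in\mathcal C(\bar I,H)$ (resp.\ $\partial_{tt}p\in L^2(I,H)$ and $\partial_tp\in\mathcal C(\bar I,V)$ for item 4) supplied by Propositions \ref{prop:exist weak} and \ref{prop:exist weaker} applied to the adjoint problem, yielding the rates $\tau^{\alpha/2}$ and $\tau^{3/2}$. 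The same multiplicative inequality is also what converts the $H^{1/2}$-type gain of $\|r_hA^{-1}q_T\|_H^{1/2}\|r_hA^{-1}q_T\|_V^{1/2}$ into the order $(\tau+h)^{\alpha-1/2}$ for the correction $p_\rh-\check{p}_\rh$ in the uniform norm. Without this interpolation-in-time step your argument only delivers the estimates at the grid points. Finally, note that the fractional cases are obtained by the $K_{1/2,\infty}$-method applied to the resulting integer-order ($\alpha=1,2$) estimates; you allude to interpolation of the data classes but should make explicit that it is the error functional itself that is interpolated.
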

\begin{proof}
1. According to \cite[Theorem 2.1 (2)]{Zlotnik94} the following energy bound hold
\[
 \|p_\rh\|_{\mathcal C(\bar I,V)}+\|\partial_tp_\rh\|_{L^\infty(\bar I,H)}
 \leq c\,\big(\|y-z_1\|_{L^2(I\times\Omega)}+\|p_\rh(T)\|_V+\|y(T)-z_2\|_H)
\]
for any $p_\rh(T)\in V_h$.
Using \eqref{est s0_1}, $A_h^{-1}=\pi_h^1A^{-1}$ and \eqref{regularity_poisson}
we get
\begin{equation}
 \|p_\rh(T)\|_V\leq c\|A^{-1}q_T\|_V\leq c_1\|q_T\|_{V^*}.
\label{estpT}
\end{equation}
By applying also the counterpart of inequalities \eqref{stab L2 a} we derive bound \eqref{eesv12}.

\par 2. The counterpart of the error estimate \eqref{est2over3} for the adjoint state equation case and bound \eqref{estpT} give
\begin{multline*}
 \|p-p_\rh\|_{\mathcal C(\bar I,H)}+\Big\|\int_I(\pi_h^1p-p_\rh)~\mathrm dt\Big\|_{V}\\
 \leq c\,(\tau+h)^{2/3}\big(\|y-z_1\|_{L^2(I,H)}+\|A_h^{-1}q_T\|_V+\|y(T)-z_2\|_H\big)
\\
 \leq c_1\,(\tau+h)^{2/3}\big(\|y\|_{\mathcal{C}(\bar{I},H)}+\|\partial_ty\|_{\mathcal{C}^1(\bar{I},V^*)}
 +\|\mathbf{z}\|_\mathcal{Y}\big).
\end{multline*}
Owing to inequality \eqref{estfordty} and Proposition \ref{prop:exist weaker} we obtain estimate \eqref{eesv12a}.

\par 3. Below we need the multiplicative inequalities
\begin{gather}
 \|w\|_{\mathcal C(\bar I\times\bar\Omega)}
 \leq c\|w\|_{\mathcal C(\bar I,H)}^{1/2}\|w\|_{\mathcal C(\bar I,V)}^{1/2}\ \ \forall w\in C(\bar I,V),
\label{inter_ineq}\\
 \|w\|_{L^2(I,\,\mathcal{C}_0(\Omega))}
 \leq c\|w\|_{L^2(I,H)}^{1/2}\|w\|_{L^2(I,V)}^{1/2}\ \ \forall w\in L^2(I,V).
\label{inter_ineq 2}
\end{gather}
\par Let $\check{p}_\rh$ be the auxiliary solution to \eqref{bar eq adj 2a} for $\check{p}_\rh(T)=\pi_h^0A^{-1}q_T$.
Owing to inequality \eqref{inter_ineq} and the stability bounds \cite[Theorem 2.1]{Zlotnik94} we get
\[
 \|p_\rh-\check{p}_\rh\|_{\mathcal C(\bar I\times\bar\Omega)}
 \leq c\|(p_\rh-\check{p}_\rh)(T)\|_{H_\tau^0}^{1/2}\|(p_\rh-\check{p}_\rh)(T)\|_V^{1/2}.
\]
Consequently, for $q_T\in H^{(\alpha-2)}$, by \eqref{est s0_1}, \eqref{V_est_poisson} and \eqref{H_est_poisson} the following chain of inequalities hold
\begin{multline*}
 \|p_\rh-\check{p}_\rh\|_{\mathcal C(\bar I\times\bar\Omega)}
 \leq c\big(\|(p_\rh-\check{p}_\rh)(T)\|_H^{1/2}\|(p_\rh-\check{p}_\rh)(T)\|_V^{1/2}+\tau^{1/2}\|(p_\rh-\check{p}_\rh)(T)\|_V\big)
\\
 \leq c_1\big(\|r_hA^{-1}q_T\|_H^{1/2}\|r_hA^{-1}q_T\|_V^{1/2}+\tau^{1/2}\|r_hA^{-1}q_T\|_V\big)\\
 \leq c_2(\tau+h)^{\alpha-1/2}\|q_T\|_{H^{(\alpha-2)}}
\end{multline*}
for $1\leq\alpha\leq 2$.
Thus it is enough to prove error estimates \eqref{eesv13} and \eqref{eesv33} for $\check{p}_\rh$ instead of $p_\rh$.
\par According to \cite[Theorem 5.3 and estimate (5.18)]{Zlotnik94} we have the error estimate
\begin{multline}\label{eesv17}
 \|i_\tau p-\check{p}_\rh\|_{\mathcal C(\bar{I}\times\bar{\Omega})}
 =\|p-\check{p}_\rh\|_{\mathcal C_\tau(\bar{I},\,\mathcal C(\bar{\Omega}))}
 :=\max_{0\leq m\leq M}\|(p-\check{p}_\rh)(t_m)\|_{\mathcal C(\bar{\Omega})}
\\[1mm]
 \leq c(\tau+h)^{2(\alpha-1/2)/3}\bigl(\|y-z_1\|_{L^2(I,\,H^{(\alpha-1)})}
 +\|y(T)-z_2\|_{H^{(\alpha-1)}}
 +\|q_T\|_{H^{(\alpha-2)}}\bigr)
\end{multline}
for $\alpha=1,2$. We emphasize that due to \cite[Theorem 4.3 (2) (e)]{Zlotnik94} and \eqref{disp12b}
this estimate holds for $\check{p}_\rh(T)=\pi_h^0A^{-1}q_T$.
\par Inequality \eqref{inter_ineq}, Proposition \ref{prop:exist weak} (applied to the adjoint state problem) and property \eqref{regularity_poisson} imply the following error estimate for the time interpolation
\begin{multline}\label{eesv23}
 \|p-i_\tau p\|_{\mathcal C(\bar{I}\times\bar{\Omega})}
 \leq c\bigl(\tau\|\partial_tp\|_{\mathcal C(\bar{I},H)}\bigr)^{1/2}
 \|(\tau\partial_t)^{\alpha-1}p\|_{\mathcal C(\bar{I},V)}^{1/2}
\\[1mm]
 \leq c_1\tau^{\alpha/2}\bigl(\|y-z_1\|_{L^2(I,\,H^{(\alpha-1)})}
 +\|y(T)-z_2\|_{H^{(\alpha-1)}}
 +\|q_T\|_{H^{(\alpha-2)}}\bigr),
\end{multline}
for $\alpha=1,2$.
Owing to estimates \eqref{eesv17} and \eqref{eesv23} {and} Propositions \ref{prop:exist weaker} and \ref{prop:exist weak} we get
\begin{multline}
 \|p-\check{p}_\rh\|_{\mathcal C(\bar{I}\times\bar{\Omega})}
 \leq c(\tau+h)^{2(\alpha-1/2)/3}\big(\|y\|_{C(\bar{I},\,H^{(\alpha-1)})}+\|\partial_ty\|_{C(\bar{I},\,H^{(\alpha-2)})}
 +\|\mathbf{z}\|_{\mathcal{Y}^{(\alpha-1)}}\big)
\\
 \leq c_1(\tau+h)^{2(\alpha-1/2)/3}\big(\|u\|_{L^2(I,H^{(\alpha-2)})}
 +\|\mathbf{y}\|_{H^{(\alpha-1)}\times H^{(\alpha-2)}}
+\|\mathbf{z}\|_{\mathcal{Y}^{(\alpha-1)}}\big),
\label{eesv25}
\end{multline}
{for $~\alpha=1,2$}, \Blue{where $\mathcal{Y}^{(0)}:=\mathcal{Y}$ and $\mathcal{Y}^{(1)}:=L^2(I,H)\times V\times H$.}
\par Applying the $K_{1/2,\infty}$-method {together with equalities \eqref{interpsp1} and \eqref{interpspVH} for $\ell=0$},
we get \eqref{eesv13} for $\check{p}_\rh$ in the role of $p_\rh$.

\par 4. First notice that the multiplicative inequality \eqref{inter_ineq 2},
Proposition \ref{prop:exist weak} (2) (applied for the adjoint state problem) and property \eqref{regularity_poisson} imply another error estimate for the time interpolation
\begin{multline*}
 \|p-i_\tau p\|_{L^2(I,\,\mathcal{C}_0(\Omega))}\leq c\bigl(\tau^2\|\partial_{tt}p\|_{L^2(I,H)}\bigr)^{1/2}
 \big(\tau\|\partial_tp\|_{\mathcal C(\bar{I},V)}\big)^{1/2}
\\[1mm]
 \leq c_1\tau^{3/2}\big(\|y-z_1\|_{L^2(I,V)}+\|y(T)-z_2\|_V+\|q_T\|_H\big).
\end{multline*}
Then Proposition \ref{prop:exist weak} (1) leads to
\begin{gather}
 \|p-i_\tau p\|_{L^2(I,\,\mathcal{C}_0(\Omega))}
 \leq c\tau^{3/2}\big(\|u\|_{H^1(I,V^*)}+\|z\|_\mathcal{Y}+\|\mathbf{y}\|_{V\times H}\big).
\label{eesv31}
\end{gather}
\par Next we derive the error estimate
\begin{gather}
 \|i_\tau p-\check{p}_\rh\|_{\mathcal C(\bar{I}\times\bar{\Omega})}
 \leq c(\tau+h)^{4/3}\big(\|u\|_{{SHW^{-1/2,1;2}(Q)}}
 +\|\mathbf{z}\|_{\mathcal{Y}^{3/2}}+\|\mathbf{y}\|_{H^{(3/2)}\times H^{(1/2)}}\big).
\label{eesv33a}
\end{gather}
According to \cite[Theorem 5.3 and estimate (5.18)]{Zlotnik94} {and equality \eqref{interpspVH} for $\ell=1$ together with} Propositions \ref{prop:exist weaker} and \ref{prop:exist weak} the following three estimates hold
\begin{equation}
 \|i_\tau p-\check{p}_\rh\|_{\mathcal C(\bar{I}\times\bar{\Omega})}
 \leq c(\tau+h)^{4/3}\|\mathbf{z}\|_{\mathcal{Y}^{3/2}}\ \ \text{for}\ \ u=0,\ \mathbf{y}=0,
\label{eesv33b}
\end{equation}
\begin{multline*}
 \|i_\tau p-\check{p}_\rh\|_{\mathcal C(\bar{I}\times\bar{\Omega})}
  \leq c(\tau+h)\|y\|_{H^1(I,H)}
\\
  \leq c_1(\tau+h)\big(\|u\|_{H^1(I,V^*)}+\|\mathbf{y}\|_{V\times H}\big) \ \text{{for}}\ \ \mathbf{z}=0,
\end{multline*}
\begin{multline*}
 \|i_\tau p-\check{p}_\rh\|_{\mathcal C(\bar{I}\times\bar{\Omega})}
 \leq c(\tau+h)^{5/3}\big(\|\partial_{tt}y\|_{L^2(I,H)}+\|\mathbf{y}\|_{V\times H}\big)\
\\
 \leq c_1(\tau+h)^{5/3}\big(\|u\|_{H^1(I,H)}+\|\mathbf{y}\|_{V^2\times V}\big)
 \ \text{for}\ \ \mathbf{z}=0
\nonumber
\end{multline*}
and for $\check{p}_\rh(T)=\pi_h^0A^{-1}q_T$ (for the same reason as above).
Then applying the $K_{1/2,\infty}$-method
to the two last estimates {and using equality \eqref{interpsp2}} we get
\[
 \|i_\tau p-\check{p}_\rh\|_{\mathcal C(\bar{I}\times\bar{\Omega})}
 \leq c(\tau+h)^{4/3}\big(\|u\|_{{SHW^{-1/2,1;2}(Q)}}+\|\mathbf{y}\|_{H^{(3/2)}\times H^{(1/2)}}\bigr)\ \ \text{for}\ \ \mathbf{z}=0.
\]
By combining this estimate and \eqref{eesv33b} we obtain \eqref{eesv33a}.
\par Estimates \eqref{eesv31} and \eqref{eesv33a} imply
\[
  \|p-\check{p}_\rh\|_{L^2(I,\,\mathcal{C}_0(\Omega))}
 \leq c(\tau+h)^{4/3}\big(\|u\|_{{SHW^{-1/2,1;2}(Q)}}
 +\|\mathbf{z}\|_{\mathcal{Y}^{3/2}}+\|\mathbf{y}\|_{H^{(3/2)}\times H^{(1/2)}}\big)
\]
that completes the proof of \eqref{eesv33} for $\check{p}_\rh$ in the role of $p_\rh$.
\end{proof}
\begin{remark}
\label{inisolv dase}
A priori stability bound \eqref{disp11} (taken for $y=0$) implies the unique solvability of the general discrete adjoint state equation
\eqref{bar eq adj 2a}-\eqref{bar eq adj 2b}.
\end{remark}
\section{Error estimates for the state variable}\label{sec:error_opt_state}

\par We introduce the discrete adjoint control-to-state operator
$S^\star_{\rh}\colon \L\times V\times H\rightarrow V_{\rh}$, $(\phi,p^1,p^0)\mapsto p_\rh$ defined by
\[
 B_\sigma(v,p_\rh)=(\rho\phi,v)_{\L}-(\rho p^1,v(T))_{H}\ \ \forall v\in V_\rh,\ v(0)=0
\]
with $p_\rh(T)=\pi_h^0p^0$. Similarly to bound \eqref{eesv12} and Remark \ref{inisolv dase} it is well defined and satisfies
\[
\|S^\star_{\rh}(\phi,p_0,p_1)\|_{\mathcal C(\bar I,V)}
\leq c\,\big(\|\phi\|_{L^2(I\times\Omega)}+\|p^0\|_V+\|p^1\|_H\big).
\]
{
Let for brevity $W,W_h\colon\mathcal Y\rightarrow \mathcal Y^\ast$ be the duality mappings defined by}
\[
 W(y_1,y_2,y_3)=(y_1,{-}y_2,A^{-1}y_3),\ \
 W_h(y_1,y_2,y_3)=(y_1,{-}y_2,A_h^{-1}y_3)
\]
for any $(y_1,y_2,y_3)\in \mathcal Y$.
With this notation, the function
\[
p_\rh=S_\rh^*\big(y-z_1,-(y(T)-z_2),A_h^{-1}(\rho\partial_ty(T)-z_3)\big)=S_\rh^*W_h(Su-z)
\]
solves
the general discrete adjoint state equation \eqref{bar eq adj 2a}-\eqref{bar eq adj 2b}.
\begin{proposition}\label{prop:error_state}
Let $\mathbf{z}\in\mathcal{Y}$ and $\mathbf{y}\in V\times V^*$.
Then the following estimate holds
\begin{equation}
 \|S\bar u-S_\rh\bar u_\rh\|_{\mathcal Y_h}\leq \|S\bar u-S_\rh\bar u\|_{\mathcal Y_h}
 +C\|S^\star W(S\bar u-{\mathbf{z}})-S^\star_\rh W_h(S\bar u-{\mathbf{z}})\|_{\C}^{1/2}.
\label{eesv9}
\end{equation}
\end{proposition}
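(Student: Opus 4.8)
The plan is to combine the two variational inequalities (for $\bar u$ and $\bar u_\rh$) with a discrete adjoint identity valid for all data in $\mathcal Y$, followed by an elementary algebraic rearrangement. Write $\delta u:=\bar u-\bar u_\rh$ and $L_\rh w:=\hat S_\rh(w,0,0)\in\mathcal Y$, the linear part of the affine map $S_\rh$, so that $L_\rh\delta u=S_\rh\bar u-S_\rh\bar u_\rh$ and the state component of $L_\rh\delta u$ has zero initial value. The first ingredient I would establish is the identity
\[
\langle S^\star_\rh W_h\mathbf{w},\,\delta u\rangle_{\C,\M}=\big(\mathbf{w},\,L_\rh\delta u\big)_{\mathcal Y_h}\qquad\forall\,\mathbf{w}\in\mathcal Y .
\]
(Note that the Lagrangian computation preceding \eqref{bar eq adj} yields this only for $\mathbf{w}=S_\rh u-\mathbf{z}$, whereas below it is needed for $\mathbf{w}=S\bar u-S_\rh\bar u_\rh$, which involves the continuous state.) It follows by testing the discrete state equation \eqref{discrete_state_equation1}--\eqref{discrete_state_equation2} for $L_\rh\delta u$ with $q_\rh:=S^\star_\rh W_h\mathbf{w}$, testing the general discrete adjoint equation \eqref{bar eq adj 2a}--\eqref{bar eq adj 2b} satisfied by $q_\rh$ with the state component of $L_\rh\delta u$ (admissible, since this component vanishes at $t=0$, so the $p_{0h}^1$-term drops out), subtracting the two relations, and using $q_\rh(T)=A_h^{-1}w_3$ together with definition \eqref{discrete_laplace_equation} of $A_h^{-1}$ to rewrite the remaining term at $t=T$ as the third summand of $(\mathbf{w},L_\rh\delta u)_{\mathcal Y_h}$.

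Next I would extract the coupling of optimality. Testing the continuous variational inequality \eqref{sub_gradient_definition} for $\bar u$ with $u=\bar u_\rh$, testing the discrete one \eqref{discrete_sub_gradient_definition} for $\bar u_\rh$ with $u=\bar u$, and adding, the norm contributions cancel and one is left with $\langle\bar p-\bar p_\rh,\,\delta u\rangle_{\C,\M}\le0$. With $\bar p=S^\star W(S\bar u-\mathbf{z})$ from Proposition \ref{prop:opt_cond}, $\bar p_\rh=S^\star_\rh W_h(S_\rh\bar u_\rh-\mathbf{z})$ from \eqref{bar eq adj}, and linearity of $W_h$ and $S^\star_\rh$, one splits
\[
\bar p-\bar p_\rh=E_p+S^\star_\rh W_h\big(S\bar u-S_\rh\bar u_\rh\big),\qquad
E_p:=S^\star W(S\bar u-\mathbf{z})-S^\star_\rh W_h(S\bar u-\mathbf{z}),
\]
where $E_p$ is precisely the adjoint error occurring on the right-hand side of \eqref{eesv9}. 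Inserting this splitting into $\langle\bar p-\bar p_\rh,\delta u\rangle_{\C,\M}\le0$ and applying the identity above with $\mathbf{w}=S\bar u-S_\rh\bar u_\rh$ (so that $L_\rh\delta u=S_\rh\bar u-S_\rh\bar u_\rh$) gives
\[
\langle E_p,\delta u\rangle_{\C,\M}+\big(S\bar u-S_\rh\bar u_\rh,\ S_\rh\bar u-S_\rh\bar u_\rh\big)_{\mathcal Y_h}\le0 .
\]

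The decisive step is then to substitute $S_\rh\bar u-S_\rh\bar u_\rh=(S\bar u-S_\rh\bar u_\rh)-(S\bar u-S_\rh\bar u)$ in the last inner product: this produces $\|S\bar u-S_\rh\bar u_\rh\|_{\mathcal Y_h}^2$ together with a single cross term, and thereby avoids the spurious second copy of $\|S\bar u-S_\rh\bar u\|_{\mathcal Y_h}$ that a plain triangle inequality would generate. After rearranging, estimating the cross term by the Cauchy--Schwarz inequality for the semi-inner product $(\cdot,\cdot)_{\mathcal Y_h}$, and bounding $|\langle E_p,\delta u\rangle_{\C,\M}|\le\|E_p\|_\C\,\|\delta u\|_\M\le C\,\|E_p\|_\C$ (using $\|\delta u\|_\M\le\|\bar u\|_\M+\|\bar u_\rh\|_\M\le C$ by \eqref{uopt} and \eqref{est u_h}), one obtains the scalar inequality
\[
X^2\le bX+C\|E_p\|_\C,\qquad X:=\|S\bar u-S_\rh\bar u_\rh\|_{\mathcal Y_h},\quad b:=\|S\bar u-S_\rh\bar u\|_{\mathcal Y_h}.
\]
Solving it and using $\sqrt{b^2+4C\|E_p\|_\C}\le b+2\sqrt C\,\|E_p\|_\C^{1/2}$ yields $X\le b+\sqrt C\,\|E_p\|_\C^{1/2}$, which is \eqref{eesv9}. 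The main obstacle is the bookkeeping in the discrete adjoint identity --- in particular checking that the state part of $L_\rh\delta u$ is an admissible test function for the adjoint equation so that no term at $t=0$ survives, and that the identity holds for arbitrary $\mathbf{w}\in\mathcal Y$; the algebraic observation about keeping the cross term rather than splitting is needed only to preserve the sharp constant in front of $\|S\bar u-S_\rh\bar u\|_{\mathcal Y_h}$.
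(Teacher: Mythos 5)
Your proposal is correct and follows essentially the same route as the paper: both arguments subtract the two variational inequalities, insert the intermediate adjoint $\hat p_\rh=S^\star_\rh W_h(S\bar u-\mathbf{z})$ (your $E_p$ is exactly $\bar p-\hat p_\rh$), convert $\langle \hat p_\rh-\bar p_\rh,\bar u-\bar u_\rh\rangle_{\C,\M}$ into the inner product $(S\bar u-S_\rh\bar u_\rh,\,S_\rh\bar u-S_\rh\bar u_\rh)_{\mathcal Y_h}$ via the discrete state/adjoint duality, and finish with elementary algebra. The only (immaterial) difference is the last step: you solve the quadratic inequality $X^2\le bX+C\|E_p\|_\C$, whereas the paper applies Young's inequality to get $X^2\le b^2+2C\|E_p\|_\C$; both yield \eqref{eesv9} with coefficient one in front of $\|S\bar u-S_\rh\bar u\|_{\mathcal Y_h}$.
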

\begin{proof}
{We recall that $\bar{p}=S^\star W(S\bar u-\mathbf{z})$ and $\bar{p}_\rh=S^\star_\rh W_h(S_\rh\bar u_\rh-\mathbf{z})$ and}
test the continuous subgradient condition \eqref{sub_gradient_definition} with the discrete optimal control $\bar u_\rh$ and the discrete subgradient condition \eqref{discrete_sub_gradient_definition} with the continuous optimal control $\bar u$. Then we subtract the first inequality from the second one and get
\[
\langle \bar u-\bar u_\rh,{\bar{p}-\bar{p}_\rh}
\rangle_{\M,\,\C}\leq 0.
\]
We {define $\hat{p}_\rh:=S^\star_\rh W_h(S\bar u-\mathbf{z})$, insert it between $\bar{p}$ and $\bar{p}_\rh$ and obtain}
\begin{equation}\label{difference_subgradient_cond_1}
0\leq\langle \bar u_\rh-\bar u,{\bar{p}-\hat{p}_\rh}
\rangle_{\M,\,\C}\\
+\langle \bar u_\rh-\bar u,{\hat{p}_\rh-\bar{p}_\rh}
 \rangle_{\M,\,\C}.
\end{equation}
For convenience we introduce the variables $(\hat y_\rh,\hat y_\rh(T),{\rho\hat{y}_{Th}^1})=S_\rh\bar u$ and
remark that the {state} equations for $(\bar y_\rh,{\bar{y}_{Th}^1})$ and $(\hat y_\rh,{\hat{y}_{Th}^1})$ have the same initial data.
{With the help of them} we rewrite
the second term {on the right
in \eqref{difference_subgradient_cond_1} taking first the difference of
the discrete state equations \eqref{bar eq state} and \eqref{discrete_state_equation1} (taken for $(\hat y_\rh,\hat{y}_{Th}^1)$)
for $v=\hat p_\rh-\bar p_\rh$,
next the difference of the discrete adjoint state equations \eqref{bar eq adj} and \eqref{bar eq adj 2a}-\eqref{bar eq adj 2b} (taken for $\hat{p}_\rh$)
for $v=\bar y_\rh-\hat y_\rh$ and $\varphi=\bar{y}_{Th}^1-\hat{y}_{Th}^1$}
and finally using \eqref{discrete_laplace_equation}
\begin{multline*}
\langle \bar u_\rh-\bar u,\hat p_\rh-\bar p_\rh\rangle_{\M,\,\C}
=B_\sigma(\bar y_\rh-\hat y_\rh,\hat p_\rh-\bar p_\rh)
+(\rho({\bar{y}_{Th}^1-\hat{y}_{Th}^1}),(\hat p_\rh-\bar p_\rh)(T))_{H}
\\[1mm]
=(\rho(\bar y_\rh-\hat y_\rh),\bar y-\bar y_\rh)_{\L}
+(\rho(\bar y_\rh-\hat y_\rh)(T),(\bar y-\bar y_\rh)(T))_{H}\\
+{\big(\rho(\bar{y}_{Th}^1-\hat{y}_{Th}^1),A_h^{-1}\big(\rho(\partial_t\bar y(T)-\bar{y}_{Th}^1)\big)\big)_{H}}
\\[1mm]
={(S_{\rh}\bar{u}_{\rh}-S_{\rh}\bar{u},S\bar{u}-S_{\rh}\bar{u}_{\rh})_{\mathcal{Y}_h}.}
\end{multline*}
{Further we easily get}
\begin{multline*}
\langle \bar u_\rh-\bar u,\hat p_\rh-\bar p_\rh\rangle_{\M,\,\C}
=(S\bar{u}-S_{\rh}\bar{u}_{\rh},S_{\rh}\bar{u}_{\rh}-S_{\rh}\bar{u})_{{\mathcal{Y}_h}}\\
=(S\bar{u}-S_{\rh}\bar{u}_{\rh},S\bar{u}-S_{\rh}\bar{u})_{{\mathcal{Y}_h}}
-\|S\bar{u}-S_{\rh}\bar{u}_{\rh}\|_{{\mathcal{Y}_h}}^2
\\[1mm]
\leq \half\|S\bar{u}-S_{\rh}\bar{u}\|_{{\mathcal{Y}_h}}^2
-\half\|S\bar{u}-S_{\rh}\bar{u}_{\rh}\|_{{\mathcal{Y}_h}}^2.
\end{multline*}
Thus \eqref{difference_subgradient_cond_1} implies
\begin{multline*}
 \|S\bar{u}-S_{\rh}\bar{u}_{\rh}\|_{{\mathcal{Y}_h}}^2
 \leq 2\langle \bar u_\rh-\bar u,{\bar{p}-\hat{p}_\rh}
 \rangle_{\M,\,\C}
 +\|S\bar{u}-S_{\rh}\bar{u}\|_{{\mathcal{Y}_h}}^2
\\[1mm]
 \leq 2(\|\bar u_\rh\|_{\M}+\|\bar u\|_{\M})\|{\bar{p}-\hat{p}_\rh}\|_{\C}
 +\|S\bar{u}-S_{\rh}\bar{u}\|_{{\mathcal{Y}_h}}^2.
\end{multline*}
Finally by applying bounds \eqref{uopt} and \eqref{est u_h} we derive \eqref{eesv9}.
\end{proof}
This proposition is important since it allows one to derive estimates for
$\bar y-\bar y_\rh$ with the help of the above error estimates for the discrete state and adjoint state equations.
\begin{theorem}
\label{prop: est y y rh}
\begin{enumerate}
\item Let $\M = {\LwM}$,
{$\mathbf{z}\in\mathcal{Y}^{1/2}$ and $\mathbf{y}\in V\times H$.}
Then the following error estimate holds
\begin{equation}
\|\bar{y}-\bar{y}_\rh\|_{L^2(I\times\Omega)}+\|(\bar y-\bar y_\rh)(T)\|_{H}
+\|\rho\big(\partial_t\bar y(T)-{\bar{y}_{Th}^1}\big)\|_{\mathcal{V}_{\kappa,h}^*}
\leq {C(\tau+h)^{1/3}.}
\label{eesv15}
\end{equation}
\item Let $\M = \mathcal{M}(\Omega,L^2(I))$,
{$\mathbf{z}\in\mathcal{Y}^{3/2}$
and $\mathbf{y}\in H^{(3/2)}\times H^{(1/2)}$.}
Then the following {higher order} error estimate holds
\begin{equation}
\|\bar{y}-\bar{y}_\rh\|_{L^2(I\times\Omega)}+\|(\bar y-\bar y_\rh)(T)\|_{H}
+\|\rho\big(\partial_t\bar y(T)-{\bar{y}_{Th}^1}\big)\|_{\mathcal{V}_{\kappa,h}^*}\\[1mm]
\leq C(\tau+h)^{2/3}.
\label{eesv29}
\end{equation}
\end{enumerate}
\end{theorem}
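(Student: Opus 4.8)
The plan is to derive both estimates from the abstract bound \eqref{eesv9} of Proposition \ref{prop:error_state}, whose hypotheses $\mathbf{z}\in\mathcal{Y}$, $\mathbf{y}\in V\times V^*$ hold in either case. First I would note that, by the definition of $(\cdot,\cdot)_{\mathcal{Y}_h}$, the identity $\|q\|_{\mathcal{V}_{\kappa,h}^*}=\|A_h^{-1}q\|_{\mathcal{V}_\kappa}$, and the equivalence of the coefficient-dependent norms, the quantity $\|S\bar u-S_\rh\bar u_\rh\|_{\mathcal{Y}_h}$ dominates the whole left-hand side of \eqref{eesv15} resp.\ \eqref{eesv29}. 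Hence it remains to bound the two terms on the right of \eqref{eesv9}; the resulting rate will be the slower of the two.

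For the first term --- the consistency error $\|S\bar u-S_\rh\bar u\|_{\mathcal{Y}_h}$ of the discrete state equation run with the fixed control $\bar u$ --- I would apply Proposition \ref{prop: est stat eq}, noting that the $H$-norm of the difference at $t=T$ is controlled by its $\mathcal C(\bar I,H)$-norm. In case (1), $\M=\LM$, the embedding $\LM\hookrightarrow L^2(I,H^{(-1/2)})$ of \eqref{embedding2} together with bound \eqref{uopt} places us in the setting of estimate \eqref{eesv11} (using $\mathbf{y}\in V\times H\hookrightarrow V\times H^{(-1/2)}$), giving $\|S\bar u-S_\rh\bar u\|_{\mathcal{Y}_h}\le C(\tau+h)^{1/3}$. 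In case (2), $\M=\ML$, I would first invoke Theorem \ref{thm:imp_reg_control} --- applicable since $\mathbf{z}\in\mathcal{Y}^{3/2}\hookrightarrow\mathcal{Y}^1$ and $\mathbf{y}\in H^{(3/2)}\times H^{(1/2)}\hookrightarrow V\times H$ --- to get $\bar u\in\mathcal C^1(\bar I,\m)$, hence $\bar u\in H^1(I,H^{(-1/2)})$ with norm $\le C$ by \eqref{embedding2 1}; then estimate \eqref{disp13} yields $\|S\bar u-S_\rh\bar u\|_{\mathcal{Y}_h}\le C(\tau+h)^{2/3}$.

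For the second term I would use that $\bar p=S^\star W(S\bar u-\mathbf{z})$ is precisely the optimal adjoint state of Proposition \ref{prop:opt_cond}, while $\hat p_\rh:=S^\star_\rh W_h(S\bar u-\mathbf{z})$ solves the general discrete adjoint state equation \eqref{bar eq adj 2a}-\eqref{bar eq adj 2b} for the state $\bar y=S\bar u$ (i.e.\ for $u=\bar u$), so that Proposition \ref{prop_est_adjoint} applies. In case (1), $\C=L^2(I,\mathcal C_0(\Omega))$, and via the trivial embedding $\mathcal C(\bar I\times\bar\Omega)\hookrightarrow L^2(I,\mathcal C_0(\Omega))$ the estimate \eqref{eesv13} --- whose hypotheses $\bar u\in L^2(I,H^{(-1/2)})$, $\mathbf{z}\in\mathcal{Y}^{1/2}$, $\mathbf{y}\in V\times H\hookrightarrow H^{(1/2)}\times H^{(-1/2)}$ all hold --- gives $\|\bar p-\hat p_\rh\|_{\C}\le C(\tau+h)^{2/3}$, whose square root contributes $C(\tau+h)^{1/3}$. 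In case (2), $\C=\mathcal C_0(\Omega,L^2(I))$; here $\bar p-\hat p_\rh$ is genuinely continuous on $\bar I\times\bar\Omega$ (since $\bar p\in\mathcal C(\bar I,V^2)$, $\hat p_\rh\in V_\rh\subset\mathcal C(\bar I,V)$ and $V^2,V\hookrightarrow\mathcal C(\bar\Omega)$ in one space dimension), so Minkowski's integral inequality gives $\|\bar p-\hat p_\rh\|_{\mathcal C_0(\Omega,L^2(I))}\le\|\bar p-\hat p_\rh\|_{L^2(I,\mathcal C_0(\Omega))}$, and the higher-order estimate \eqref{eesv33} --- whose hypotheses $\bar u\in H^1(I,H^{(-1/2)})$, $\mathbf{z}\in\mathcal{Y}^{3/2}$, $\mathbf{y}\in H^{(3/2)}\times H^{(1/2)}$ are met --- gives $\|\bar p-\hat p_\rh\|_{\C}\le C(\tau+h)^{4/3}$, whose square root contributes $C(\tau+h)^{2/3}$.

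Inserting these bounds into \eqref{eesv9} and absorbing the data norms (bounded by $C$ through \eqref{uopt}, \eqref{est u_h} and Theorem \ref{thm:imp_reg_control}) into the constant, one obtains $\|S\bar u-S_\rh\bar u_\rh\|_{\mathcal{Y}_h}\le C(\tau+h)^{1/3}$ in case (1) and $\le C(\tau+h)^{2/3}$ in case (2), i.e.\ \eqref{eesv15} resp.\ \eqref{eesv29}. The delicate point is the space book-keeping for the adjoint-error term: the high-order estimate of Proposition \ref{prop_est_adjoint} lives in $L^2(I,\mathcal C_0(\Omega))$, whereas the subdifferential calculus behind \eqref{eesv9} forces the vector-measure case to measure $\bar p-\hat p_\rh$ in $\mathcal C_0(\Omega,L^2(I))$, so one must verify --- via Minkowski's inequality together with the 1D Sobolev embeddings, for functions that are truly continuous in $(t,x)$ --- that the former norm controls the latter; one also has to trace the Nikolskii-interpolation embeddings $\mathcal{Y}^{3/2}\hookrightarrow\mathcal{Y}^1$, $V\hookrightarrow H^{(1/2)}$, $H\hookrightarrow H^{(-1/2)}$, $H^{(3/2)}\hookrightarrow V$, $H^{(1/2)}\hookrightarrow H$ to see that the theorem's data hypotheses feed the cited propositions.
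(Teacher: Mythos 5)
Your proposal is correct and follows essentially the same route as the paper: both parts are reduced to the abstract bound \eqref{eesv9}, the consistency term is handled by Proposition \ref{prop: est stat eq} (via \eqref{eesv11} resp.\ \eqref{disp13} after invoking \eqref{embedding2} resp.\ Theorem \ref{thm:imp_reg_control} and \eqref{embedding2 1}), and the adjoint term by Proposition \ref{prop_est_adjoint} (3) resp.\ (4). The only difference is that you spell out details the paper leaves implicit --- that the $\mathcal Y_h$-norm dominates the left-hand side, the embedding $L^2(I,\mathcal C_0(\Omega))\hookrightarrow\mathcal C_0(\Omega,L^2(I))$ needed in case (2), and the interpolation-space inclusions --- all of which check out.
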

\begin{proof}
1. {Let us base on Proposition \ref{prop:error_state}.
First, Proposition \ref{prop: est stat eq} (4) implies
\[
\|S\bar u-S_\rh\bar u\|_{\mathcal Y_h}\leq c(\tau+h)^{1/3}\big(\|\bar u\|_{{H^{-1/2,0;2}(Q)}}
+{\|\mathbf{y}\|_{V\times H}}\big).
\]
Second, Proposition \ref{prop_est_adjoint} (3) leads to
\begin{multline*}
\|S^\star W(S\bar u-{\mathbf{z}})-S^\star_\rh W_h(S\bar u-{\mathbf{z}})\|_{\C}\\
\leq c(\tau+h)^{2/3}\big(\|\bar{u}\|_{{H^{-1/2,0;2}(Q)}}
+\|\mathbf{z}\|_{\mathcal{Y}^{1/2}}
+\|\mathbf{y}\|_{H^{(1/2)}\times H^{(-1/2)}}\big).
\end{multline*}
Now owing to Proposition \ref{prop:error_state}, {embedding \eqref{embedding2}} and bound \eqref{uopt} for $\bar u$ error estimate \eqref{eesv15} is proved.
}
\par 2. First, Proposition \ref{prop: est stat eq} (3) implies
\[
\|S\bar u-S_\rh\bar u\|_{\mathcal Y_h}\leq c(\tau+h)^{2/3}\left(\|\bar u\|_{{SHW^{-1/2,1;2}(Q)}}
+{\|\mathbf{y}\|_{H^{(3/2)}\times H^{(1/2)}}}
\right).
\]
Second, Proposition \ref{prop_est_adjoint} (4) leads to
\begin{multline*}
\|S^\star W(S\bar u-{\mathbf{z}})-S^\star_\rh W_h(S\bar u-{\mathbf{z}})\|_{\C}\\
\leq c(\tau+h)^{4/3}\big(\|\bar{u}\|_{{SHW^{-1/2,1;2}(Q)}}
+\|\mathbf{z}\|_{\mathcal{Y}^{3/2}}
+\|\mathbf{y}\|_{H^{(3/2)}\times H^{(1/2)}}\big).
\end{multline*}
Now owing to Proposition \ref{prop:error_state}, {embedding \eqref{embedding2 1}} and Theorem \ref{thm:imp_reg_control} for $\bar u$ error estimate \eqref{eesv29} is proved too.
\end{proof}
\begin{remark}
Note that our error bounds could be better provided that one would improve the last term on the right in \eqref{eesv9} by increasing the power $1/2$.
But this seems a complicated problem.
\end{remark}

\section{Error estimate for the cost functional}\label{sec:error_cost_func}
In this section we derive error estimate for the cost functional. We first observe the inequalities
\[
j(\bar u)\leq j(\bar u_\rh){,}\ \
j_\rh(\bar u_\rh)\leq j_\rh(\bar u)
\]
which can be equivalently rewritten in the form
\begin{equation}\label{est_cost}
j(\bar u)-j_\rh(\bar u)\leq j(\bar u)-j_\rh(\bar u_\rh)\leq j(\bar u_\rh)-j_\rh(\bar u_\rh).
\end{equation}
Therefore{, to bound $|j(\bar u)-j_\rh(\bar u_\rh)|$ below we apply the following result.}
\begin{proposition}\label{prop:err cost func}
Let $\mathbf{y}\in V\times H$. Then for any $u\in\M$
\begin{multline}\label{est func diff}
|j(u)-j_\rh(u)|
\leq c\Big(\|Su-S_\rh u\|_{{\mathcal Y_h}}^2+\big(\|u\|_{\M}+{\|\mathbf{y}\|_{V\times H}}\big)
\big(\|p-p_\rh\|_{\C}
+\|p(0)-p_{\rh}(0)\|_H
\\
+h\|\partial_tp(0)\|_H
+\|\rho(\partial_t{p(0)-{p}_{0h}^1})\|_{\mathcal{V}_{\kappa,h}^*}\big)
+\|{r_hA^{-1}\big(\rho\partial_t y(T)\big)}\|_V^2
+\|{r_hA^{-1}z_3}\|_V^2\Big)
\end{multline}
with $(y,y(T),{\rho}\partial_t y(T))=Su$
and the same $p$ and $(p_\rh,{p}_{0h}^1)$ as in Proposition \ref{prop_est_adjoint}.
\end{proposition}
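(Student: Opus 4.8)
The plan is to use that the regularization term $\alpha\|u\|_{\M}$ occurs identically in $j$ and $j_\rh$, so that $j(u)-j_\rh(u)=F(Su)-F_h(S_\rh u-\mathbf{z})$, and then to expand this difference of quadratic functionals around $Su$. With $q_T:=\rho\partial_ty(T)-z_3$ write
\[
j(u)-j_\rh(u)=\half\big(\|Su-\mathbf{z}\|_{\mathcal{Y}}^2-\|Su-\mathbf{z}\|_{\mathcal{Y}_h}^2\big)+\big(F_h(Su-\mathbf{z})-F_h(S_\rh u-\mathbf{z})\big).
\]
The $\mathcal{Y}$- and $\mathcal{Y}_h$-norms of $Su-\mathbf{z}$ differ only in the third slot, where they equal $\|A^{-1}q_T\|_{\mathcal{V}_\kappa}$ and $\|A_h^{-1}q_T\|_{\mathcal{V}_\kappa}=\|\pi_h^1A^{-1}q_T\|_{\mathcal{V}_\kappa}$; hence \eqref{galerkin ort} with $w=\tilde w=A^{-1}q_T$ shows that the first bracket equals $\half\|r_hA^{-1}q_T\|_{\mathcal{V}_\kappa}^2$, which by $\|\cdot\|_{\mathcal{V}_\kappa}\le c\|\cdot\|_V$, linearity of $r_hA^{-1}$ and Young's inequality is $\le c(\|r_hA^{-1}(\rho\partial_ty(T))\|_V^2+\|r_hA^{-1}z_3\|_V^2)$. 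For the second bracket, the exact Taylor formula of the quadratic $F_h(\cdot-\mathbf{z})$ at $Su$ gives $F_h(Su-\mathbf{z})-F_h(S_\rh u-\mathbf{z})=(Su-\mathbf{z},Su-S_\rh u)_{\mathcal{Y}_h}-\half\|Su-S_\rh u\|_{\mathcal{Y}_h}^2$, whose last summand is already one of the terms on the right of \eqref{est func diff}.

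The heart of the proof is to rewrite the linear term $L:=(Su-\mathbf{z},Su-S_\rh u)_{\mathcal{Y}_h}$. Using $p_\rh(T)=A_h^{-1}q_T$ and the self-adjointness of $A_h^{-1}$, its third component becomes $\langle\rho\partial_ty(T)-\rho y_{Th}^1,p_\rh(T)\rangle_\Omega$. For the first two components, on the continuous side I would invoke the solution-by-transposition identity \eqref{solution_transposition} with data $\phi=y-z_1\in\L$, $p^0=A^{-1}q_T\in V$, $p^1=-(y(T)-z_2)\in H$, for which the solution $p$ of \eqref{dual_state_equation} is exactly $S^\star W(Su-\mathbf{z})$ as in Proposition~\ref{prop_est_adjoint}; this yields
\[
(\rho(y-z_1),y)_{\L}+(\rho(y(T)-z_2),y(T))_H=\langle u,p\rangle_{\M,\C}+(\rho y^1,p(0))_H-(\rho y^0,\partial_tp(0))_H-\langle\rho\partial_ty(T),A^{-1}q_T\rangle_\Omega .
\]
On the discrete side, subtracting the discrete state equation \eqref{discrete_state_equation1} tested with $v=p_\rh\in V_\rh$ from the discrete adjoint equation \eqref{bar eq adj 2a} tested with $v=y_\rh\in V_\rh$ gives
\[
(\rho(y-z_1),y_\rh)_{\L}+(\rho(y(T)-z_2),y_\rh(T))_H=\langle u,p_\rh\rangle_{\M,\C}+(\rho y^1,p_\rh(0))_H-(\rho y_\rh(0),p_{0h}^1)_H-(\rho y_{Th}^1,p_\rh(T))_H .
\]
Substituting both into $L$, the terms carrying $y_{Th}^1$ cancel, the $A^{-1}q_T$-versus-$A_h^{-1}q_T$ discrepancy collapses to $-\langle\rho\partial_ty(T),r_hA^{-1}q_T\rangle_\Omega$, and with $y_\rh(0)=\pi_h^0y^0$ one is left with
\[
L=\langle u,p-p_\rh\rangle_{\M,\C}+(\rho y^1,p(0)-p_\rh(0))_H+(\rho\pi_h^0y^0,p_{0h}^1-\partial_tp(0))_H-(\rho(y^0-\pi_h^0y^0),\partial_tp(0))_H-\langle\rho\partial_ty(T),r_hA^{-1}q_T\rangle_\Omega .
\]

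Finally each term of $L$ is bounded in the obvious way: $|\langle u,p-p_\rh\rangle_{\M,\C}|\le\|u\|_{\M}\|p-p_\rh\|_{\C}$; $|(\rho y^1,p(0)-p_\rh(0))_H|\le c\|y^1\|_H\|p(0)-p_\rh(0)\|_H$; since $\pi_h^0y^0\in V_h$, $|(\rho\pi_h^0y^0,p_{0h}^1-\partial_tp(0))_H|\le\|\rho(\partial_tp(0)-p_{0h}^1)\|_{\mathcal{V}_{\kappa,h}^*}\|\pi_h^0y^0\|_{\mathcal{V}_\kappa}\le c\|\rho(\partial_tp(0)-p_{0h}^1)\|_{\mathcal{V}_{\kappa,h}^*}\|y^0\|_V$ by \eqref{est s0_1}; $|(\rho(y^0-\pi_h^0y^0),\partial_tp(0))_H|\le c\|y^0-\pi_h^0y^0\|_H\|\partial_tp(0)\|_H\le c_1h\|y^0\|_V\|\partial_tp(0)\|_H$; and, writing $\rho\partial_ty(T)=q_T+z_3$ and using the Galerkin orthogonality of $r_hA^{-1}q_T$ once more, $\langle\rho\partial_ty(T),r_hA^{-1}q_T\rangle_\Omega=\|r_hA^{-1}q_T\|_{\mathcal{V}_\kappa}^2+(r_hA^{-1}z_3,r_hA^{-1}q_T)_{\mathcal{V}_\kappa}\le c(\|r_hA^{-1}(\rho\partial_ty(T))\|_V^2+\|r_hA^{-1}z_3\|_V^2)$. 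Collecting all contributions, with $\|u\|_{\M}$ and $\|\mathbf{y}\|_{V\times H}\ge\|y^0\|_V,\|y^1\|_H$ as common factors, gives \eqref{est func diff}. I expect the \emph{main obstacle} to be the bookkeeping in these two transposition identities: one has to check that \eqref{solution_transposition} applies (it does, by \eqref{embedding1}, $y^0\in V\subset H$, $y^1\in H\subset V^*$), that its discrete analogue is produced with admissible test functions $y_\rh,p_\rh\in V_\rh$, that every term with $y_{Th}^1$ and every initial datum other than the projection defect $y^0-\pi_h^0y^0$ cancels, and — harmlessly, since every pairing above is against a function in $V_h\subset V$ — that the case $\M=\LM$, where $\rho\partial_ty(T)$ lies only in $V^*$, causes no difficulty.
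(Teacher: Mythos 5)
Your proposal is correct and follows essentially the same route as the paper: the same splitting of $j(u)-j_\rh(u)$ via \eqref{galerkin ort} into $-\frac12\|Su-S_\rh u\|_{\mathcal Y_h}^2$ plus the linear term plus the $r_hA^{-1}$ remainder, the same rewriting of the linear term through the transposition identity \eqref{solution_transposition} on the continuous side and the discrete state/adjoint equations tested against $p_\rh$ and $y_\rh$ on the discrete side, and the same final bounds including the $y^0=(y^0-\pi_h^0y^0)+\pi_h^0y^0$ split. The cancellations you predict (the $y_{Th}^1$ terms, the collapse to $-\langle\rho\partial_ty(T),r_hA^{-1}q_T\rangle_\Omega$) are exactly those carried out in the paper's \eqref{susrh}--\eqref{dahm1}.
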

\begin{proof}
Let $u\in \M$. According to the definitions of the continuous and discrete cost functionals and {property \eqref{galerkin ort}
for $\tilde{w}=w$ and $\tilde{w}_h=w_h$ we get
\begin{multline}
j(u)-j_\rh(u)=\half\|Su-{\mathbf{z}}\|_{\mathcal Y}^2
-\half\|S_\rh u-{\mathbf{z}}\|_{{\mathcal Y_h}}^2
\\[1mm]
=\half(Su-S_\rh u,Su+S_\rh u-2{\mathbf{z}})_{{\mathcal Y_h}}
+\half\|A^{-1}(\rho\partial_t y(T)-z_3)\|_{{\mathcal{V}_\kappa}}^2
-\half\|A_h^{-1}(\rho\partial_ty(T)-z_3)\|_{{\mathcal{V}_\kappa}}^2
\\[1mm]
=-\half\|Su-S_\rh u\|_{{\mathcal Y_h}}^2+(Su-S_\rh u,S u-{\mathbf{z}})_{{\mathcal Y_h}}
+\half\|{r_hA^{-1}}\big(\rho\partial_ty(T)-z_3\big)\|_{{\mathcal{V}_\kappa}}^2.
\label{jujuh}
\end{multline}
\par We set $p_{Th}:=A_h^{-1}(\rho\partial_ty(T)-z_3)$.

Owing to the adjoint problem \eqref{solution_transposition} with
\[(\phi,p^1,p^0)=W(y-z_1,y(T)-z_2,\rho\partial_ty(T)-z_3)\]
we {have}
\begin{multline*}
 (Su,Su-\mathbf{z})_{\mathcal Y_h}-(A_h^{-1}(\rho\partial_ty(T)),p_{Th})_{\mathcal{V}_\kappa}
 =(\rho y,y-z_1)_{L^2(I\times\Omega)}
 +(\rho y(T),y(T)-z_2)_H\\
 =\langle u,p\rangle_{\M,\,\C}+(\rho y^1,p(0))_H
 -\langle\rho\partial_ty(T),p^0\rangle_\Omega
 -(\rho y^0,\partial_tp(0))_H.
\end{multline*}
Similarly owing to the general discrete adjoint state equation \eqref{bar eq adj 2a}-\eqref{bar eq adj 2b} for $v=y_\rh$
and the discrete state equation \eqref{discrete_state_equation1}-\eqref{discrete_state_equation2} for $v=p_\rh$ and $\varphi=p_{0h}^1$ we get
\begin{multline*}
 (S_\rh u,Su-\mathbf{z})_{\mathcal Y_h}-(A_h^{-1}(\rho y_{Th}^1),p_{Th})_{\mathcal{V}_\kappa}
 =(\rho y_\rh,y-z_1)_{L^2(I\times\Omega)}
 +(\rho y_\rh(T),y(T)-z_2)_H
\\
 =B_\sigma(y_\rh,p_\rh)-(\rho y_\rh(0),p_{0h}^1)_H
\\
 =\langle u,p_\rh\rangle_{\M,\,\C}+(\rho y^1,p_\rh(0))_H
 -(\rho y_{Th}^1,p_\rh(T))_H
 -(\rho y^0,p_{0h}^1)_H.
\end{multline*}
In addition owing to the definitions \eqref{bar eq adj 2b} of $p_\rh(T)$ and \eqref{discrete_laplace_equation} of $A_h^{-1}$, we can write
\[
 (\rho y_{Th}^1,p_\rh(T))_H=(\rho y_{Th}^1,p_{Th})_H=(A_h^{-1}(\rho y_{Th}^1),p_{Th})_{\mathcal{V}_\kappa}.
\]
Consequently we obtain}
\begin{multline}
(Su-S_\rh u,Su-{\mathbf{z}})_{{\mathcal Y_h}}=(Su,Su-{\mathbf{z}})_{\mathcal Y_h}
-(S_\rh u,Su-{\mathbf{z}})_{{\mathcal Y_h}}
\\[1mm]
=\langle u,p-p_\rh\rangle_{\M,\,\C}
-(\rho y^0,\partial_tp(0)-{p_{0h}^1})_{H}+(\rho y^1,p(0)-p_{\rh}(0))_{H}
\\[1mm]
+(A_h^{-1}(\rho\partial_ty(T)),p_{Th})_{\mathcal{V}_\kappa}-\langle\rho\partial_ty(T),p^0\rangle_\Omega.
\label{susrh}
\end{multline}
In addition using property \eqref{galerkin ort} we derive
\begin{multline}
 (A_h^{-1}(\rho\partial_ty(T)),p_{Th})_{\mathcal{V}_\kappa}-\langle\rho\partial_ty(T),p^0\rangle_\Omega\\
 =(A_h^{-1}(\rho\partial_ty(T)),p_{Th})_{\mathcal{V}_\kappa}-(A^{-1}(\rho\partial_ty(T)),p^0)_{\mathcal{V}_\kappa}
\\
 =-(r_hA^{-1}(\rho\partial_ty(T)),r_hA^{-1}(\rho\partial_ty(T)-z_3))_{\mathcal{V}_\kappa}.
\label{dahm1}
\end{multline}
Next, for the term $(\rho y^0,\partial_tp(0)-{p_{0h}^1})_{H}$ in \eqref{susrh} we have
\begin{multline}
|(\rho y^0,\partial_tp(0)-{p_{0h}^1})_{H}|
=|(\rho(y^0-\pi_h^0y^0),\partial_tp(0)-{p_{0h}^1})_{H}
+(\rho \pi_h^0y^0,\partial_tp(0)-{p_{0h}^1})_{H}|
\\
{\leq}
|(\rho(y^0-\pi_h^0y^0),\partial_tp(0))_{H}|
+{c\|\pi_h^0y^0\|_V\|\rho(\partial_tp(0)-p_{0h}^1)\|_{\mathcal{V}_{\kappa,h}^*}}
\\
\leq c_1\,\|y^0\|_V
\Big(h\|\partial_tp(0)\|_{H}+\|\rho(\partial_tp(0)-{p_{0h}^1})\|_{\mathcal{V}_{\kappa,h}^*}\Big)
\label{popoh}
\end{multline}
due to the bounds $\|y^0-\pi_h^0y^0\|_{H_\rho}\leq\|y^0-\pi_h^1y^0\|_{H_\rho}$, \eqref{H_est_poisson} and
\eqref{est s0_1}.
Clearly also $|(\rho y^1,p(0)-p_{\rh}(0))_H|\leq\|y^1\|_H\|p(0)-p_{\rh}(0)\|_H$. Finally from \eqref{jujuh}-\eqref{popoh} we derive
\eqref{est func diff}.
\end{proof}
Now we prove for the cost functional a higher order error estimate than \eqref{eesv15} for the state variable {in the case $\M = {\LwM}$.}
\begin{theorem}\label{prop:conv func}
Let $\M = {\LwM}$, $\mathbf{z}\in\mathcal{Y}^{1/2}$ and $\mathbf{y}\in V\times {H}$.
Then the following error estimate for the cost functional holds
\[
  |j(\bar u)-j_\rh(\bar u_\rh)|
 \leq {C}(\tau+h)^{2/3}.
\]
\end{theorem}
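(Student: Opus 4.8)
The plan is to combine the sandwich inequalities \eqref{est_cost} with the general estimate of Proposition~\ref{prop:err cost func}. From \eqref{est_cost} one gets
\[
 |j(\bar u)-j_\rh(\bar u_\rh)|\leq\max\big(|j(\bar u)-j_\rh(\bar u)|,\ |j(\bar u_\rh)-j_\rh(\bar u_\rh)|\big),
\]
so it suffices to bound $|j(u)-j_\rh(u)|$ by $C(\tau+h)^{2/3}$ for $u=\bar u$ and $u=\bar u_\rh$; recall that $\|\bar u\|_{\M},\|\bar u_\rh\|_{\M}\leq C$ by \eqref{uopt} and \eqref{est u_h}, and that $\M=\LM\hookrightarrow L^2(I,H^{(-1/2)})$ by \eqref{embedding2}. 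Writing $(y,y(T),\rho\partial_t y(T))=Su$, I would estimate the six terms on the right of \eqref{est func diff} one by one.

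The estimate \eqref{eesv11}, squared, gives $\|Su-S_\rh u\|_{\mathcal Y_h}^2\leq c(\tau+h)^{2/3}(\|u\|_{\M}+\|\mathbf{y}\|_{V\times H})^2$. Since $\C=\LC$ and the $\mathcal C_0(\Omega)$- and $\mathcal C(\bar\Omega)$-norms agree on functions vanishing at the endpoints, $\|p-p_\rh\|_{\C}\leq\sqrt T\,\|p-p_\rh\|_{\mathcal C(\bar I\times\bar\Omega)}$, and \eqref{eesv13} yields a bound of order $(\tau+h)^{2/3}$ (here $\mathbf{z}\in\mathcal{Y}^{1/2}$ is precisely the required regularity and $\mathbf{y}\in V\times H\hookrightarrow H^{(1/2)}\times H^{(-1/2)}$). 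The terms $\|p(0)-p_\rh(0)\|_H\leq\|p-p_\rh\|_{\mathcal C(\bar I,H)}$ and $\|\rho(\partial_t p(0)-p_{0h}^1)\|_{\mathcal{V}_{\kappa,h}^*}$ are of order $(\tau+h)^{2/3}$ by \eqref{eesv12a}. The term $h\|\partial_t p(0)\|_H$ is $\mathcal O(h)$ because the adjoint state $p$ lies in $\mathcal C^1(\bar I,H)$ with $\|\partial_t p\|_{\mathcal C(\bar I,H)}\leq C$ by Proposition~\ref{prop:exist weak} applied to \eqref{dual_state_equation} (note $y-z_1\in\L$, $-(y(T)-z_2)\in H$ and $A^{-1}(\rho\partial_t y(T)-z_3)\in V$). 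Finally $\|r_h A^{-1}z_3\|_V^2=\mathcal O(h)$ by \eqref{V_est_poisson} with $\lambda=-1/2$, since $z_3\in H^{(-1/2)}$.

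The remaining term $\|r_h A^{-1}(\rho\partial_t y(T))\|_V^2$ is the main point. Testing identity \eqref{sefordty} against $v\in V$ shows that $w:=A^{-1}(\rho\partial_t y(T))$ satisfies $(\kappa\partial_x w,\partial_x v)_H=-(\kappa\partial_x\int_I y\,\mathrm dt,\partial_x v)_H+\langle\int_I u\,\mathrm dt+\rho y^1,v\rangle_\Omega$, hence $w=-\int_I y\,\mathrm dt+A^{-1}\big(\int_I u\,\mathrm dt+\rho y^1\big)$ and therefore
\[
 r_h A^{-1}(\rho\partial_t y(T))=-(I-\pi_h^1)\textstyle\int_I y\,\mathrm dt+r_h A^{-1}\big(\int_I u\,\mathrm dt\big)+r_h A^{-1}(\rho y^1).
\]
Since $\int_I u\,\mathrm dt\in\m\hookrightarrow H^{(-1/2)}$ (by a Minkowski-type inequality and Lemma~\ref{lem:embedding}) and $\rho y^1\in H$, estimate \eqref{V_est_poisson} (with $\lambda=-1/2$ and $\lambda=0$) bounds the last two terms in $V$ by $ch^{1/2}\|u\|_{\M}$ and $ch\|y^1\|_H$. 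For the first term I would observe that $\int_I y\,\mathrm dt=\tilde y(T)$, where $\tilde y$ is the weak solution of the integrated problem \eqref{equation_primitive} with source $\mathcal I_t u+\rho y^1\in H^1(I,H^{(-1/2)})$ and data $(0,y^0)$, with $y^0\in V\hookrightarrow H^{(1/2)}$; interpolating by the $K_{1/2,\infty}$-method the two regularity statements of Proposition~\ref{prop:exist weak} (the cases $X=H^1(I,V^*)$ and $X=H^1(I,H)$, applied to \eqref{equation_primitive}) gives $\int_I y\,\mathrm dt\in H^{(3/2)}$ with $\|\int_I y\,\mathrm dt\|_{H^{(3/2)}}\leq c(\|u\|_{\M}+\|\mathbf{y}\|_{V\times H})$, so that $\|(I-\pi_h^1)\int_I y\,\mathrm dt\|_V\leq ch^{1/2}\|\int_I y\,\mathrm dt\|_{H^{(3/2)}}$ by \eqref{V_est_poisson} and \eqref{regularity_poisson}. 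Hence $\|r_h A^{-1}(\rho\partial_t y(T))\|_V^2=\mathcal O(h)$.

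Collecting the six contributions in \eqref{est func diff} and using $\|u\|_{\M}\leq C$ yields $|j(u)-j_\rh(u)|\leq C(\tau+h)^{2/3}$ for $u\in\{\bar u,\bar u_\rh\}$, and the sandwich above finishes the proof. The only genuinely new ingredient, and the main obstacle, is the interpolation argument producing the extra half order of spatial regularity $\int_I y\,\mathrm dt\in H^{(3/2)}$ for the time-averaged state; all other contributions are direct consequences of the error estimates of Propositions~\ref{prop: est stat eq} and \ref{prop_est_adjoint} together with \eqref{V_est_poisson}.
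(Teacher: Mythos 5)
Your proof is correct and, for five of the six terms in \eqref{est func diff}, coincides with the paper's own argument: the sandwich \eqref{est_cost}, the squared state estimate \eqref{eesv11}, the adjoint estimates \eqref{eesv13} and \eqref{eesv12a}, the bound on $h\|\partial_tp(0)\|_H$ via Proposition \ref{prop:exist weak}, and \eqref{V_est_poisson} with $\lambda=-1/2$ for $z_3$ are exactly the steps taken there. The one place you diverge is the term $\|r_hA^{-1}(\rho\partial_ty(T))\|_V$: the paper simply applies \eqref{V_est_poisson} with $\lambda=-1/2$ to $\rho\partial_ty(T)$, which presupposes (without spelling it out) that $\partial_ty(T)\in H^{(-1/2)}$ with a bound by $\|u\|_{L^2(I,H^{(-1/2)})}$ and the data --- itself an interpolation of Propositions \ref{prop:exist weaker} and \ref{prop:exist weak} between $u\in L^2(I,V^*)$ and $u\in L^2(I,H)$. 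You instead eliminate $\partial_ty(T)$ altogether via the identity \eqref{sefordty}, reducing the question to the spatial regularity $\int_Iy\,\mathrm dt=\tilde y(T)\in H^{(3/2)}$ of the integrated state \eqref{equation_primitive}, which you obtain by the same kind of $K_{1/2,\infty}$-interpolation (noting $y^0\in V\hookrightarrow H^{(1/2)}$ and $\mathcal I_tu+\rho y^1\in H^1(I,H^{(-1/2)})$). The two routes are of comparable difficulty and yield the identical $\mathcal O(h^{1/2})$ bound in $V$, hence $\mathcal O(h)$ after squaring; yours has the small advantage of making the hidden regularity step explicit and of working only with quantities ($\int_Iy\,\mathrm dt$, $\int_Iu\,\mathrm dt$, $y^1$) already controlled elsewhere, at the cost of an extra decomposition. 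Everything else, including the final collection of terms and the use of \eqref{uopt} and \eqref{est u_h} to bound $\|\bar u\|_{\M}$ and $\|\bar u_\rh\|_{\M}$, matches the paper.
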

\begin{proof}
Let us base on Proposition \ref{prop:err cost func} and take any $u\in {\LwM}$.
Owing to Proposition \ref{prop: est stat eq} {(2)} we have
\begin{equation*}
\|Su-S_\rh u\|_{\mathcal Y_h}
 \leq c(\tau+h)^{1/3}{\big(\|u\|_{{H^{-1/2,0;2}(Q)}}
+\|\mathbf{y}\|_{V\times H^{(-1/2)}}\big)}.
\end{equation*}
Proposition \ref{prop_est_adjoint} (3) leads to
\[
 \|p-p_\rh\|_{\mathcal C(\bar I \times \bar{\Omega})}
 \leq c(\tau+h)^{2/3}\big(\|u\|_{{H^{-1/2,0;2}(Q)}}
+\|\mathbf{z}\|_{\mathcal{Y}^{1/2}}
+\|\mathbf{y}\|_{H^{(1/2)}\times H^{(-1/2)}}\big).
\]
Owing to Propositions \ref{prop:exist weak}(1) (applied to the adjoint state problem) and \ref{prop:exist weaker} we have
\[
 \|\partial_tp(0)\|_H\leq\|\partial_tp\|_{C(\bar{I},H)}
\leq c\big(\|u\|_{L^2(I,V^*)}+\|\mathbf{y}\|_{H\times V^*}+\|\mathbf{z}\|_\mathcal{Y}\big)
\]
(like in estimates \eqref{eesv23}-\eqref{eesv25} for $\alpha=1$).
By using estimate \eqref{V_est_poisson} for $\lambda=-1/2$ we obtain
\[
{\|{r_hA^{-1}\big(\rho\partial_t y(T)\big)}\|_V
+\|{r_hA^{-1}z_3}\|_V
\\
\leq ch^{1/2}\big(\|\partial_ty(T)\|_{H^{(-1/2)}}+\|z_3\|_{H^{(-1/2)}}\big).}
\]
\par By collecting all these estimates together with embedding \eqref{embedding2}, Proposition \ref{prop_est_adjoint} (2) to bound $\|\rho(\partial_t{p(0)-{p}_{0h}^1})\|_{\mathcal{V}_{\kappa,h}^*}$ and applying Proposition \ref{prop:err cost func}, we derive
\[
 |j(u)-j_\rh(u)|\leq c(\tau+h)^{2/3}\big(\|u\|_{{\LwM}}+\|\mathbf{z}\|_{\mathcal{Y}^{1/2}}
+\|\mathbf{y}\|_{V\times H}\big)^2.
\]
Owing to inequalities \eqref{est_cost} together with bounds \eqref{uopt} for $\bar{u}$ and \eqref{est u_h} for $\bar{u}_\rh$ the proof is complete.
\end{proof}
\begin{remark}
In the case $\M=\mathcal{M}(\Omega,L^2(I))$ we know that $\bar u \in {\mathcal{C}^1(\bar{I},\mathcal{M}(\Omega))}$ (cf. Theorem \ref{thm:imp_reg_control}).
The lack of the corresponding bound
{at least $\|\bar{u}_\rh\|_{SHW^{-1/2,1;2}(Q)}\leq C$ at the discrete level}
does not allow us to prove the error estimate $|j(\bar u)-j_\rh(\bar u_\rh)|\leq C(\tau+h)^{4/3}$.
The estimate $|j(\bar u)-j_\rh(\bar u_\rh)|\leq C(\tau+h)^{2/3}$ follows directly from~\eqref{eesv29}.
\end{remark}

\section{Time-stepping formulation}
\label{tistfo}
In this section we discuss the time-stepping formulation of the discrete state equation \eqref{discrete_state_equation1}-\eqref{discrete_state_equation2} and the discrete adjoint state equation \eqref{bar eq adj}.
{
We introduce the piecewise-linear ``hat'' functions such that
$e^\tau_m(t_k)=\delta_{m,k}$ for any $k,m=0,\ldots,M$, where $\delta_{m,k}$ is the Kroneker delta.
We recall that $e^\tau_m$ are ``half'' hat functions for $m=0,M$.
There holds $V_\tau=\spa\{e_0^\tau,\ldots,e_M^\tau\}$.
Similarly, we introduce the spatial hat functions such that $e^h_j(x_k)=\delta_{j,k}$ for any $j=1,\ldots,N-1$ and $k=0,\ldots,N$; then $V_h=\spa\{e^h_1,\ldots,e^h_{N-1}\}$.
\par Then  the approximate state variable $y_\rh\in V_\rh$ can be represented in the following forms
\begin{equation}\label{apprstvar}
y_\rh(t,x)=\sum_{m=0}^M\sum_{j=1}^{N-1}y_{m,j}e^h_j(x)e^\tau_m(t)
=\sum_{m=0}^My_{m}^h(x)e^\tau_m(t)=\sum_{j=1}^{N-1}y_{j}^\tau(t)e^h_j(x)
\end{equation}
for $(t,x)\in \bar I\times \bar \Omega$ with $y_{m,j}\in \R$, $y_{m}^h\in V_h$ and $y_{j}^\tau\in V_\tau$.

\par We also}
define the forward and backward difference {quotients} and the average in time {operator}
\begin{gather*}
 {\delta_t} v_m=\frac{v_{m+1}-v_m}{\tau},~~{\bar{\delta}_t} v_m=\frac{v_m-v_{m-1}}{\tau},
\\
 B^\tau v_m=\frac16v_{m-1}+\frac23v_m+\frac16v_{m+1},~1\leq m\leq M-1,
\\
 B^\tau v_0=\frac13v_0+\frac16v_1,~~B^\tau v_M=\frac16v_{M-1}+\frac13v_M.
\end{gather*}
We define the self-adjoint positive-definite operators $B_h$ and $L_h$ acting in $V_h$ (in other words, the mass and stiffness matrices) such that
\[
 (B_h\varphi_h,\psi_h)_{V_h}=(\rho\varphi_h,\psi_h)_{H},~~
 (L_h\varphi_h,\psi_h)_{V_h}=(\kappa\partial_x\varphi_h,\partial_x\psi_h)_{H}\quad\forall \varphi,\psi\in V_h.
\]
For $w\in V^\ast$ and ${u}\in L^2(I,V^\ast)$ we define the vectors $w^h=\{\langle w,e_{j}^h\rangle_{\Omega}\}_{j=1}^{N-1}$ and
\begin{align*}
 {u}^\rh_m
 &=\frac{1}{\tau}\Big\{\big(\langle {u},e_{j}^h\rangle_{\Omega},e_m^\tau\big)_{L^2(I)}\Big\}_{j=1}^{N-1},~1\leq m\leq M-1,\\
 {u}^\rh_m
 &=\frac{2}{\tau}\Big\{\big(\langle {u},e_{j}^h\rangle_{\Omega},e_m^\tau\big)_{L^2(I)}\Big\}_{j=1}^{N-1},~m=0,M.
\end{align*}
We recall the form of the discrete state \eqref{apprstvar}.
\par\textit{The forward time-stepping} is implemented as follows. The integral identities \eqref{discrete_state_equation1}-\eqref{discrete_state_equation2} are equivalent to the operator equations
\begin{gather}
 (B_h+\sigma\tau^2L_h){\delta_t\bar{\delta}_t}y_{\rh,m}+L_hy_{\rh,m}=u_m^{\rh},~~m=2,\ldots,M-1,
\label{fts1}\\[1mm]
\textstyle{(B_h+\sigma\tau^2L_h){\delta_t}y_{\rh,1}+\frac{\tau}{2}L_hy_{\rh,0}=(\rho y^1)^h+\frac{\tau}{2}u_0^{\rh},}
\label{fts2}\\[1mm]
 B_hy_{\rh,0}=(\rho y^0)^h
\label{fts3}
\end{gather}
{followed by the counterpart of \eqref{fts2} at time $T$ for $y_{Th}^1$:}
\begin{gather}
\textstyle{  B_h{y_{Th}^1}
 =(B_h+\sigma\tau^2L_h){\bar{\delta}_t}y_{\rh,M}-\frac{\tau}{2}L_hy_{\rh,M}+\frac{\tau}{2}u_M^{\rh}.}
\label{fts4}
\end{gather}
\par Next \textit{the adjoint (backward) time-stepping} is implemented in a similar manner.
Namely, the integral identities \eqref{bar eq adj} are equivalent to the operator equations
\begin{gather}
\hspace{-4pt}
 (B_h+\sigma\tau^2L_h){\delta_t\bar{\delta}_t}p_{\rh,m}+L_hp_{\rh,m}=B_hB^\tau y_{\rh,m}-(\rho z_1)_m^{\rh},\ m=M-1,\dots,1,
\label{ats1}\\[1mm]
\textstyle{  -(B_h+\sigma\tau^2L_h){\bar{\delta}_t}p_{\rh,M}+\frac{\tau}{2}L_hp_{\rh,M}}
 =B_hy_{\rh,M}-(\rho z_2)^h+\frac{\tau}{2}\big(B_hB^\tau y_{\rh,M}-(\rho z_1)_M^{\rh}\big),
\label{ats2}\\[1mm]
 L_hp_{\rh,M}=B_h{y_{Th}^1}-z_3^h,
\label{ats3}
\end{gather}
{followed by the counterpart of \eqref{fts4} for $p_{0h}^1$:}
\begin{gather}
\textstyle{  B_h{p_{0h}^1}=(B_h+\sigma\tau^2L_h)\delta_tp_{\rh,0}+\frac{\tau}{2}L_hp_{\rh,0}
 -\frac{\tau}{2}\big(B_hB^\tau y_{\rh,0}-(\rho z_1)_0^{\rh}\big).}
\label{ats4}
\end{gather}
\begin{remark}\label{Crank Nicolson}
For $\sigma=1/4$ the three-level time stepping scheme \eqref{fts1}-\eqref{fts4} is closely related to the well-known two-level Crank-Nicolson method applied to the first order in time system
\begin{equation*}
\left\{\begin{aligned}
\partial_ty=v,~\rho\partial_{t}v-\partial_x(\kappa\partial_{x}y)&=u&&\text{in}~I\times \Omega\\
y&=0&&\text{on}~I\times\partial\Omega\\
y=y^0,~v&=y^1&&\text{in}~\{0\}\times\Omega,\\
\end{aligned}\right.
\end{equation*}
see \cite[Section 8]{Zlotnik94} for details,
as well as to the Petrov-Galerkin method described in \cite{KroenerKunischVexler11}.
After the mass lumping, for $\sigma=0$ our method becomes explicit and is related to the Leap-Frog method;
{moreover, for any $\sigma$ it becomes close to three-level finite-difference schemes with such weight in time,
eg. see \cite{Samarskii01}.
}
\end{remark}
\section{Control discretization. {Solution process and $L^2(I\times \Omega)$-regularization}}\label{sec:control disc}
Now we discuss in more detail solving of the semi-discrete optimization problem \eqref{semi_discrete_problem} in the case $\M=\mathcal{M}(\Omega,L^2(I))$.
\par An important point is that we can seek its solution in the form
\[
\bar u_\rh\in \mathcal M_\rh:=V_\tau\otimes\mathcal M_h,\ \
\mathcal M_h:=\spa\{\delta_{x_1},\ldots,\delta_{x_{N-1}}\}\subset\mathcal M(\Omega).
\]
{To show that, let} $\pi_\tau^0$  be the projector in $L^2(I)$ on $V_{\tau}$. Note that, for $\eta\in L^2(I)$, it satisfies
\begin{align*}
 (B^\tau\pi_\tau^0\eta)_m&=\frac{1}{\tau}(\eta,e_m^\tau)_{L^2(I)}~~\text{for}~~1\leq m\leq M-1,\\
 (B^\tau \pi^0_\tau\eta)_m&=\frac{2}{\tau}(\eta,e_m^\tau)_{L^2(I)}~~\text{for}~~m=0,M.
\end{align*}
Then we define { $\Pi_{h}$: $\mathcal{M}(\Omega)\to \mathcal{M}_h$ by
$\Pi_hw:=\sum_{j=1}^{N-1}\langle w,e_{j}^h\rangle_{\Omega}
 \delta_{x_j}$ and $\Pi_\rh=\pi_\tau^0\Pi_h$.}
The following identity holds
\[
 \langle\Pi_\rh u,v\rangle_{\M,\,\C}=\langle u,\pi_\tau^0i_hv\rangle_{\M,\,\C}\quad\forall u\in\M, v\in\C
\]
with {the interpolation operator $i_h$: $\mathcal C_0(\Omega)\to V_{h}$ such that $i_hw(x_j)=w(x_j)$ for all $j=0,\ldots,N$.}
In particular, if $v\in V_\rh$, then
\[
 \langle\Pi_\rh u,v\rangle_{\M,\,\C}=\langle u,v\rangle_{\M,\,\C},
\]
and consequently (like in \cite[Lemma 3.11]{KunischPieperVexler2014}) we have $S_\rh=S_\rh\circ\Pi_\rh$ as well as
{$\|\Pi_\rh u\|_{\M}\leq\|u\|_{\M}$.}
Thus for each solution $\tilde u_\rh$ of problem \eqref{semi_discrete_problem}, \textit{the discrete} \textit{control} $\Pi_\rh\tilde u_\rh$
satisfies
\[
 j_\rh(\tilde u_\rh)=j_\rh(\Pi_\rh\tilde u_\rh).
\]
Therefore $\Pi_\rh\tilde u_\rh$ is also a solution of \eqref{semi_discrete_problem}. This is a justification for solving the fully discrete problem
\begin{equation}\label{fully_discrete_problem}
j_\rh(u_\rh)=\half\left\|S_\rh u_\rh-{\mathbf{z}}\right\|_{{\mathcal Y_h}}^{2}+\alpha\|u_\rh\|_{\mathcal M(\Omega,L^2(I))}\to\min_{u_\rh\in \mathcal M_\rh}
\end{equation}
in order to get a solution of \eqref{semi_discrete_problem}.

\par The direct solution of \eqref{fully_discrete_problem} by means of a generalized Newton type method is a challenging problem since a proper globalization strategy is needed, see \cite{milzarekulbrich14}.
Thus we propose a solution strategy based on an additional $L^2(I\times\Omega)$-regularization of \eqref{fully_discrete_problem}
with a parameter $\gamma>0$ and a continuation method. For high values of $\gamma$
the corresponding Newton type method converges independently of the initial guess in numerical practice. Thus the continuation strategy can be seen as simple globalization strategy.
\par On the continuous level we consider the following regularized problem
\begin{equation}\label{regularized problem}
\textstyle{ j_\gamma(u)=\frac12\left\|Su-{\mathbf{z}}\right\|_{\mathcal Y}^{2}+\alpha\|u\|_{\mathcal M(\Omega,L^2(I))}
+\frac \gamma 2\|u\|_{L^2(I\times\Omega)}^2\to\min_{u\in L^2(I\times\Omega)}.}
\end{equation}
It is possible to formulate a semi-smooth Newton method for this problem on the continuous level which is based on the following necessary and sufficient optimality condition
\begin{equation}\label{reg_opt_cond}
\bar u_\gamma(t,x)=-\frac 1 \gamma\max\left(0,1-\frac{\alpha}{\|\bar p(\cdot,x)\|_{L^2(I)}}\right)\bar p(t,x),\quad (t,x)\in I\times \Omega,
\end{equation}
with $\bar p= S^\star{W_h(S\bar u_\gamma-\mathbf{z}})$. Moreover, this semi-smooth Newton method is superlinear convergent.
{Let $\bar u_\gamma$ and $\bar u$ be the unique solutions of \eqref{regularized problem} and \eqref{measure_control_problem}.}
Then we have $\bar u_\gamma\rightharpoonup^\ast \bar u$ in $\mathcal M(\Omega,L^2(I))${, see
\cite{KunischPieperVexler2014,Pieper:2015,HerzogStadlerWachsmuth2012}.}
This justifies the use of a continuation strategy in $\gamma$. The control discretization described
{above}
can not be used for \eqref{regularized problem}. Instead we propose to use discrete controls from $V_\rh$, i.e.,
\[
u_\rh(t,x)=\sum_{m=0}^M\sum_{j=1}^{N-1}u_{m,j}e^\tau_m(t)e^h_j(x)=\sum_{j=1}^{N-1}u_j(t)e^h_j(x)
=\sum_{m=0}^{M}u_m(x)e^\tau_{m}(t),
\]
cf. \eqref{apprstvar}. In particular, we solve the following fully discrete regularized problem
\begin{equation}\label{fully_discrete_regularized problem}
\textstyle{ j_\rh^\gamma(u_\rh)=\frac12\left\|S_\rh(l_\rh u_\rh)-{\mathbf{z}}\right\|_{{\mathcal Y_h}}^{2}+\alpha\|u_\rh\|_{\mathcal M(\Omega,L^2(I)),h}+\frac \gamma 2\|u_\rh\|_{L^2(I\times\Omega),h}^2\to\min_{u_\rh\in V_\rh}}
\end{equation}
with
\[
\|u_\rh\|_{\mathcal M(\Omega,L^2(I)),h}=\sum_{j=1}^{N-1}d_j\|u_j\|_{L^2(I)},\quad \|u_\rh\|_{L^2(I\times\Omega),h}^2= \sum_{m=0}^M (B^\tau u_m)^tD(B^\tau u_m)
\]
where $D=\diag(d_1,\ldots,d_{N-1})$
is the lumped mass matrix.
Moreover, the operator $l_\rh$ is defined by
\[
(l_\rh u_\rh,v_\rh)_{L^2(I\times \Omega),h}=\sum_{m=0}^M(B^\tau u_m)^tD(B^\tau v_m)\quad \forall u_\rh,v_\rh\in V_\rh.
\]
The use of $D$
allows us to derive the following optimality conditions for \eqref{fully_discrete_regularized problem}
\begin{equation}\label{discret_reg_opt_cond}
\bar u_{m,j}^\gamma
=-\frac 1 \gamma\max\left(0,1-\frac{\alpha}{\|\bar p_{\rh\,\cdot,j}\|_{L^2(I)}}\right)\bar p_{\rh\,m,j},
\end{equation}
for all $m$ and $j$,
with $\bar p_\rh=S^\star_\rh{W_h}(S_\rh \bar u_\rh-{\mathbf{z}})$, cf. \eqref{reg_opt_cond}.
Based on \eqref{discret_reg_opt_cond} we can set up a semi-smooth Newton method. Since problem \eqref{fully_discrete_regularized problem}
is a discretization of \eqref{regularized problem}, we can expect that this method behaves mesh independently. Let $\bar u_\rh^\gamma=\sum_{j=1}^{N-1}u_j(t)e^h_j$ be the solution of \eqref{fully_discrete_regularized problem} and we define
\[
\tilde u_\rh^\gamma=\sum_{j=1}^{N-1}d_ju_j(t)\delta_{x_j}.
\]
As $\gamma\rightarrow 0$ the control
$\tilde u_\rh^\gamma$ {tends to}
a solution of \eqref{fully_discrete_problem} justifying the use of this control discretization and the continuation strategy. For more details see \cite{Pieper:2015}.
\section{Numerical results}
\label{sec:numerics}
In this section, we present results of numerical experiments and consider two examples both involving zero initial data $y^0=y^1=0$, the control space $\M=\ML$ and the tracking functional
\[
F(y)={\half}\|y-z\|_{\L}^2,\ \ \textstyle{ z(x):=\frac{1}{\sqrt{2\pi\rho}}e^{-\frac{(x-\lambda)^2}{2\rho^2}}}
\]
with the time independent desired state $z$ which is a Gaussian centered at $x=\lambda$.
We choose $\rho=0.1$ and $\lambda$ as an irrational parameter.

For sufficiently large $\alpha$ ($\alpha = 0.1$), we expect that the optimal control {$\bar{u}$} consist{s} of one point source with a position close to {$\lambda$.}
If the Gaussian would move through the domain, a point source shaped {$\bar{u}$} is not able to follow the center of the Gaussian since
$\ML$ contains no moving point sources.
The optimal control would rather consist of some additional fixed point sources. This would not lower the regularity of the state whereas a moving point source can cause it.
\par The domain $\Omega$ and the time segment $\bar{I}$ are discretized by the uniform grids for $N=2^{r_h}$ and $M=2^{r_\tau}$
where $r_\tau,r_h=2,3,\ldots,r^{\max}$ with $r^{\max}=10$.
The stability parameter is fixed to its lowest value $\sigma=1/4$ ensuring unconditional stability of the time-stepping method.
The discrete control problem is solved for $r_h=2,3,\ldots,{r^{\max}}$ and the fixed $r_\tau=r^{\max}$ and then vice versa.
The solution process {has been described above in} Section \ref{sec:control disc}.
Numerically the desired state $z$ is replaced by $i_hz$ for simplicity, moreover the corresponding error $\mathcal{O}(h^2)$ is negligible. Since the optimal pairs $(\bar u,\bar y)$ are not known in our examples, we replace them by reference solutions $(\hat u,\hat y)$ which are taken as the approximate solutions on the finest grid level.
\smallskip\par \textbf{Example 1}.
We first take the constant coefficients $\rho\equiv 1$ and $\kappa\equiv 1$ and set $\lambda=\pi/20$.
We depict the reference solution $(\hat u,\hat y)$ in Figure \ref{fig:ref solution}.

\begin{figure}[!htb]
\centering
\begin{subfigure}{0.48\textwidth}
\centering
\includegraphics[width=5cm,height=5cm]{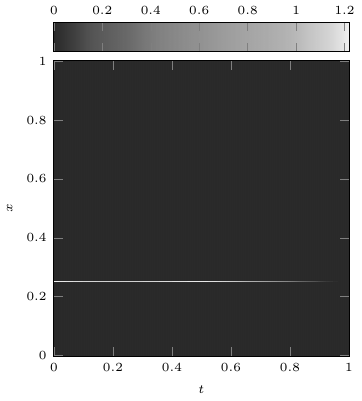}
\caption{$\hat u$ (on a coarser grid)}
\end{subfigure}
\quad
\begin{subfigure}{0.48\textwidth}
\centering
\includegraphics[width=5cm,height=5cm]{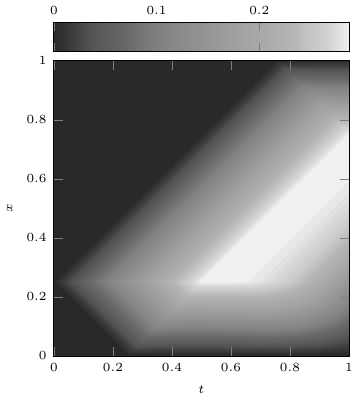}
\caption{$\hat y$}
\end{subfigure}
\caption{Example 1: the reference solution $(\hat u,\hat y)$}\label{fig:ref solution}
\end{figure}
As expected, the optimal control $\hat u$ consists only of one point source positioned in the vicinity of {$\lambda$.}
Thus, the state $\hat y$ has a kink at this position. Due to reflections at the boundary, $\hat y$ has also kinks at other positions.
\par Next, we discuss the convergence results.
In Figure \ref{fig:h refine}, we see the convergence rate of $\|\bar y_\sigma-\hat y\|_{\L}$ (left) and the objective functional (right) as $h$ refines.
The state error behaves mostly in a linear way and the rate for the functional is close to two; as usual the latter is approximately the doubled rate of the former, and fortunately both are better than the above proved theoretical rates.
\begin{figure}[!htb]
\centering
\begin{subfigure}{0.48\textwidth}
  \centering
%
%
%
%
%
\includegraphics[width=5cm,height=6cm]{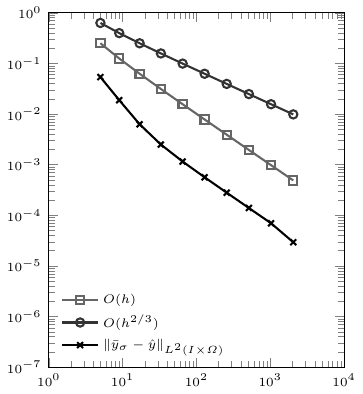}
\end{subfigure}
\quad
\begin{subfigure}{0.48\textwidth}
%
%
%
%
%
\includegraphics[width=5cm,height=6cm]{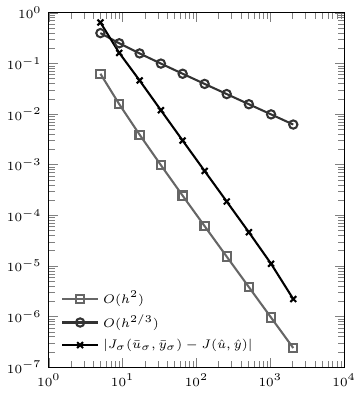}
\end{subfigure}
\caption{Example 1: errors as $h$ refines and $M=2^{10}$}
\label{fig:h refine}
\end{figure}
In Figure \ref{fig:tau refine}, we see the similar results as $\tau$ refines.
The error of the functional stagnates at the last $\tau$ refinement that is caused by a too coarse space grid.
Nevertheless, we observe reduced rates {for $\hat{y}$ much less than two} caused by its reduced regularity (kinks).
\begin{figure}[!htb]
\centering
\begin{subfigure}[t]{0.48\textwidth}
  \centering
%
%
%
%
%
\includegraphics[width=5cm,height=6cm]{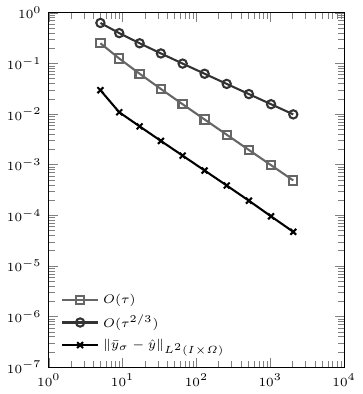}
\end{subfigure}
\quad
\begin{subfigure}[t]{0.48\textwidth}
  \centering
%
%
%
%
%
\includegraphics[width=5cm,height=6cm]{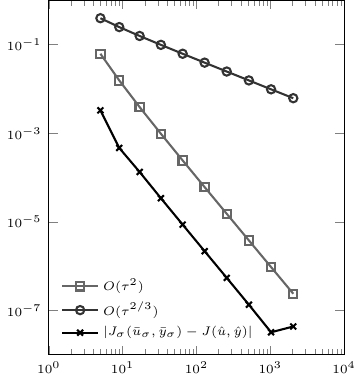}
\end{subfigure}
\caption{Example 1: errors as $\tau$ refines and $N=2^{10}$}
\label{fig:tau refine}
\end{figure}
\smallskip\par\textbf{Example 2}.
Now we take the variable coefficient
\[
\kappa(x)=
\begin{cases}
1.2&\quad 0.25<x\leq 1,\\
0.2&\quad 0\leq x<0.25
\end{cases}
\]
and set $\lambda = \pi/6$.
Our analysis does not cover discontinuous coefficients, but they are of great importance in applications, for example, in seismic tomography.
A jump discontinuity in $\kappa$ translates to a jump in the wave speed which can be related to two different material characteristics changing at the point of discontinuity.
Note that the point of discontinuity is a grid point for all grid levels.
\begin{figure}[!htb]
\centering
\begin{subfigure}{0.48\textwidth}
\centering
\includegraphics[width=5cm,height=5cm]{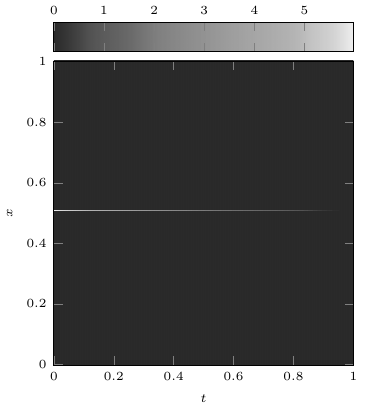}
\caption{$\hat u$ (on a coarser grid)}
\end{subfigure}
\quad
\begin{subfigure}{0.48\textwidth}
\centering
\includegraphics[width=5cm,height=5cm]{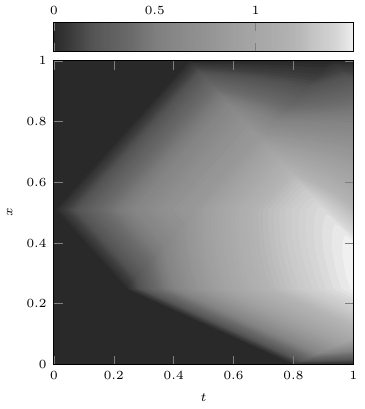}
\caption{$\hat y$}
\end{subfigure}
\caption{Example 2: the reference solution $(\hat u,\hat y)$}\label{fig:ref solution_kappa}
\end{figure}
The reference solution $(\hat u,\hat y)$ is displayed in Figure \ref{fig:ref solution_kappa}.
Once again $\hat u$ consists of one Dirac measure with a time-dependent intensity located in the vicinity of $\lambda=\pi/6$ and
thus $\hat y$ has a kink at this position.
Moreover, we can clearly see that at $x=0.25$ the wave speed changes and the wave propagation becomes slower.
\begin{figure}[!htb]
\centering
\begin{subfigure}{0.48\textwidth}
  \centering
%
%
%
%
%
\includegraphics[width=5cm,height=6cm]{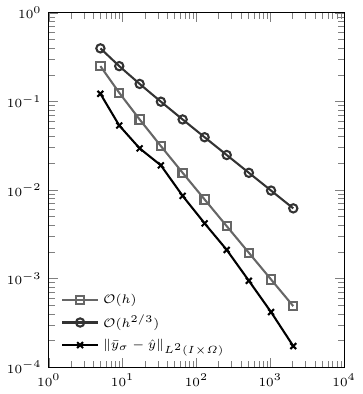}
\end{subfigure}
\quad
\begin{subfigure}{0.48\textwidth}
  \centering
%
%
%
%
\includegraphics[width=5cm,height=6cm]{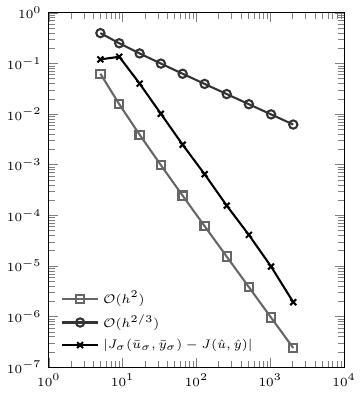}
\end{subfigure}
\caption{Example 2: errors as $h$ refines and $M=2^{10}$}
\label{fig:h refine_kappa}
\end{figure}
In Figure \ref{fig:h refine_kappa} we observe that the error of the state variable converges  in a linear way whereas the error measured in the objective functional behaves quadratically.
\begin{figure}[!htb]
\centering
\begin{subfigure}{0.48\textwidth}
  \centering
%
%
%
%
\includegraphics[width=5cm,height=6cm]{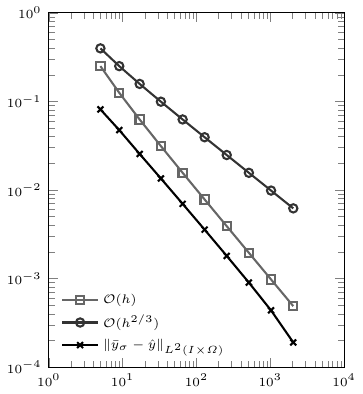}
\end{subfigure}
\quad
\begin{subfigure}{0.48\textwidth}
  \centering
%
%
%
%
\includegraphics[width=5cm,height=6cm]{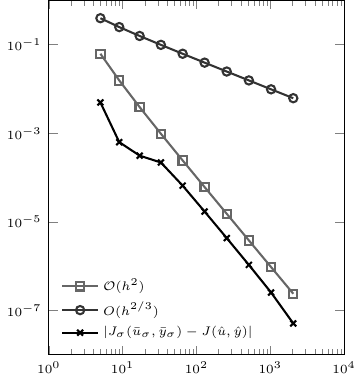}
\end{subfigure}
\caption{Example 2: errors as $\tau$ refines and $N=2^{10}$}
\label{fig:tau refine_kappa}
\end{figure}
Finally in Figure \ref{fig:tau refine_kappa} we study the error behaviour for $\tau$-refinement and find the similar rates of convergence.
So somewhat surprisingly we find that the convergence behavior of the error is comparable to the previous Example 1 with $\kappa\equiv1$ that stimulates further possible studies.}

\end{document}